\newcommand{\fpd}{\mbox{\rm FPdim\,}}
\newcommand{\comp}{\mbox{comp}}
\newcommand{\coev}{\mbox{coev}}
\newcommand{\ev}{\mbox{ev}}
\newcommand{\otk}{{\otimes_{\ku}}}
\newcommand{\Mo}{{\mathcal M}}
\newcommand{\No}{{\mathcal N}}
\newcommand{\Bimo}{{\mathcal Bimod}}
\newcommand{\uc}{{\mathcal U}}
\newcommand{\ca}{{\mathcal C}}
\newcommand{\ot}{{\otimes}}
\newcommand{\op}{\rm{op}}
\newcommand{\Ec}{{\mathcal E}}
\newcommand{\Ac}{{\mathcal A}}
\newcommand{\Zc}{{\mathcal Z}}
\newcommand{\cha}{{\mathcal A}}
\newcommand{\ele}{{\mathcal L}}
\newcommand{\A}{{\mathcal A}}
\newcommand{\Do}{{\mathcal D}}
\newcommand{\Bc}{{\mathcal B}}
\newcommand{\rev}{\rm{rev}}
\newcommand{\cop}{\rm{cop}}
\newcommand{\ra}{\rm{ra}}
\newcommand{\la}{\rm{la}}
\newcommand{\cf}{\rm{CF}}
\newcommand{\ku}{{\Bbbk}}
\newcommand{\uno}{ \mathbf{1}}
\newcommand{\C}{{\mathcal C}}
\newcommand{\id}{\mbox{\rm id\,}}
\newcommand{\Id}{\mbox{\rm Id\,}}
\newcommand{\Res}{\mbox{\rm Res\,}}
\newcommand{\vect}{\mbox{\rm vect\,}}
\newcommand{\Nat}{\mbox{\rm Nat\,}}
\newcommand{\Rex}{\mbox{\rm Rex\,}}
\newcommand{\Fun}{\operatorname{Fun}}
\newcommand\Rep{\operatorname{Rep}}
\newcommand\Hom{\operatorname{Hom}}
\newcommand\uhom{\underline{\Hom}}
\newcommand{\End}{\operatorname{End}}
\renewcommand{\_}[1]{\mbox{$_{\left( #1 \right)}$}}
\theoremstyle{plain}
\numberwithin{equation}{section}
\newtheorem{question}{Question}[section]
\newtheorem{teo}{Theorem}[section]
\newtheorem{lema}[teo]{Lemma}
\newtheorem{cor}[teo]{Corollary}
\newtheorem{prop}[teo]{Proposition}
\newtheorem{claim}{Claim}[section]
\theoremstyle{definition}
\newtheorem{defi}[teo]{Definition}
\theoremstyle{remark}
\newtheorem{rmk}[teo]{Remark}
\def\pf{\begin{proof}}
\def\epf{\end{proof}}
\theoremstyle{remark}
\subjclass[2010]{18D20, 18D10}
\begin{document}

\title[ Relative adjoint  algebras ]
{ Relative adjoint  algebras}
\author[     Mombelli  ]{ Mart\'in Mombelli
 }

\keywords{tensor category; module category}
\address{Facultad de Matem\'atica, Astronom\'\i a y F\'\i sica
\newline \indent
Universidad Nacional de C\'ordoba
\newline
\indent CIEM -- CONICET
\newline \indent Medina Allende s/n
\newline
\indent (5000) Ciudad Universitaria, C\'ordoba, Argentina}
\email{martin10090@gmail.com, martin.mombelli@unc.edu.ar
\newline \indent\emph{URL:}\/ https://www.famaf.unc.edu.ar/$\sim$mombelli
 }

\begin{abstract}  Given a finite tensor category $\ca$, an exact indecomposable $\ca$-module category $\Mo$, and a tensor subcategory $\Do\subseteq \ca^*_\Mo$, we describe a way to produce \textit{exact} commutative algebras in the center $Z(\ca)$, measuring this inclusion. The construction of such algebras is done in an analogous way as presented by Shimizu \cite{Sh2},  but using instead the \textit{relative (co)end}, a categorical tool developed in \cite{BM} in the realm of representations of tensor categories. We  provide some explicit computations.
\end{abstract}

\date{\today}
\maketitle


\section*{Introduction}

Assume $\Mo$ is a module category over a finite tensor category $\ca$.   In \cite{Sh2}, the author introduces the notion of \textit{adjoint algebra} $\cha_\Mo$ and the space of class functions $\cf(\Mo)$ associated to $\Mo$, generalizing the definitions given in \cite{Sh1}. This algebra is an interesting object associated   to  $\Mo$. In fact, this algebra is a complete invariant of $\Mo$.

\medbreak

If $\triangleright:\ca\times \Mo\to \Mo$ is the action of $\ca$ on $\Mo$, then we can consider  the  action functor 
$$\rho_\Mo:\ca\to \End(\Mo),$$ $$\rho_\Mo(X)(M)=X\triangleright M,\,\, X\in \ca, M\in\Mo.$$ The right adjoint of this functor is $\rho_\Mo^{\ra}:  \End(\Mo)\to\ca$, wich can be explicitly described as an end
$$ \rho_\Mo^{\ra}(F)= \int_{M\in \Mo} \uhom(M, F(M)),$$
for any  $F\in \End(\Mo)$, see \cite[Thm. 3.4]{Sh2}.  Here for any $M\in \Mo$, $\uhom(M, -) $ is the right adjoint of the functor $\ca\to \Mo$, $X\mapsto X\triangleright  M$. It is called the \textit{internal Hom }of the module category $\Mo$. The \textit{adjoint algebra} is defined as 
$$\cha_\Mo= \rho_\Mo^{\ra}(\Id_\Mo).$$ This object has structure of commutative algebra in the Drinfeld center $Z(\ca)$. A key ingredient of the construction is the end of the functor
$$\uhom(-, -):\Mo^{\op}\times \Mo\to \ca. $$

In this paper we investigate what kind of structure it is obtained, when we replace the usual end by the relative end, a new categorical tool introduced in \cite{BM}. 

Assume $\Ac$ is any category, and $S:\Mo^{\op}\times \Mo\to \Ac$ is a functor equipped with natural isomorphisms 
 $$\beta^X_{M,N}: S(M,X\triangleright N)\to S(X^*\triangleright M,N).$$
 We call this isomorphism a $\ca$-\textit{prebalancing} of $S$. Any additive $\ku$-linear functor $S$ posses a $\vect_\ku$-prebalancing. However, in the general case, the prebalancing is an extra structure of the functor $S$. The \textit{relative end} of $S$
is an object  $E \in \Ac$
 that comes with dinatural transformations $\pi_M: E \xrightarrow{ . .} S(M,M)$ such that the  equation
\begin{equation}\label{dwq2} S(\ev_X\triangleright \id_M,  \id_M) \pi_M= S(m_{X^*,X,M},  \id_M) \beta^X_{X\triangleright M,M} \pi_{X\triangleright M},\end{equation}
is fulfilled, and it is universal among all objects in $\Ac$ with dinatural transformations that satisfy \eqref{dwq2}. Unlike the case $\ca= \vect_\ku$, it may happen that a dinatural transformation does not satisfy  \eqref{dwq2}. The relative end of $S$ is denoted by $\oint_{M\in \Mo} (S,\beta)$. When $\ca= \vect_\ku$, the relative end coincides with the usual end. Relative coends are defined similarly.

If  $\Do\subseteq \ca^*_\Mo$ is a tensor subcategory, then $\Do$ acts on the left and on the right on $\End(\Mo)$ making it a $\Do$-bimodule category. The action functor 
$$\rho_\Mo:\ca\to \End(\Mo) $$
lands in $ Z_\Do(\End(\Mo))$, the relative center of  $\End(\Mo)$. Thus we can consider the functor
\begin{equation}\label{rho-01}
\rho:\ca\to  Z_\Do(\End(\Mo)).
\end{equation}  
Given any $F\in Z_\Do(\End(\Mo))$,  the internal Hom of the module category $\Mo$ posses a canonical $\Do$-prebalancing, see Lemma \ref{inthomfunct-preb}. Thus we can consider the relative end
$$\oint_{M\in \Mo} \uhom( - , F(-))\in \ca. $$

The \textit{adjoint algebra relative to} $\Do$ is then defined as
\begin{equation}\label{definition-a}
\Ac_{\Do,\Mo}:=\oint_{M\in \Mo} \uhom( - , -)\in \ca.
\end{equation} 
See Definition \ref{def-a1}. The algebra $ \Ac_{\Do,\Mo}$ seems to measure very well the inclusion $\Do\subseteq \ca^*_\Mo$. The object $ \Ac_{\Do,\Mo}$ is in fact a commutative connected algebra in the Drinfeld center $Z(\ca)$ such that $Z(\ca)_{\Ac_{\Do,\Mo}}$ is a rigid monoidal category. This last condition is equivalent to $Z(\ca)_{\Ac_{\Do,\Mo}}$ being an \textit{exact} module category over $Z(\ca)$, that is $\Ac_{\Do,\Mo}$ is an \textit{exact} algebra. We called this type of algebras \textit{étale}, generalizing the notion of étale algebras in the semisimple case. See \cite[Section 3]{DMNO}. In our case, separability of the algebra is replaced by the condition of exactness. 
\medbreak

The key step to prove all these facts, is the computation of a right adjoint of the functor $\rho$, given in \eqref{rho-01}. A right adjoint is described as 
$$ \bar{\rho}: Z_\Do(\End(\Mo))\to \ca,$$
$$\bar{\rho}(F)= \oint_{M\in \Mo} \uhom(M, F(M)).$$
In Theorem \ref{adjoint-structure} we prove this fact, and we also prove that $\bar{\rho}$ is exact, faithful and dominant. This implies that the adjunction $(\rho, \bar{\rho})$ is monadic, and this fact implies that there is a monoidal equivalence
$$Z(\ca)_{A_{\Do,\Mo}} \simeq Z_\Do(\ca^*_\Mo).$$
Hence we can compute its Frobenius-Perron dimension as $$\fpd(\Ac_{\Do, \Mo})= \frac{\fpd(\ca)}{\fpd(\Do)}.$$
These assertions are stated in Proposition \ref{et-1}.

The correspondence between tensor subcategories of $ \ca^*_\Mo$ and certain commutative algebras in the center is not new, at least in the semisimple case. In \cite[Thm. 4.10]{DMNO},  the authors described a bijective correspondence between tensor subcategories of a fixed tensor category $\ca$ and étale subalgebras of $\Ac_\ca.$ This subalgebra is also described as $I(\uno)$, where $I$ is the right adjoint of a certain functor. What we present new in this paper is an explicit description of the functor $I$, and the means to compute the algebra $\Ac_{\Do, \Mo}$.

\medbreak

One of the advantages of the description presented in \eqref{definition-a}, is that it provides a formula for its computation. In Section \ref{Section:comp} we present explicit calculations. We provide examples when $\ca$ is an equivariantization by a finite group, when $\ca$ is the tensor category of representations of a finite-dimensional Hopf algebra, and when  $\ca$ is the Deligne tensor product of two tensor categories. 
\subsection*{Acknowledgments} This  work  was  partially supported by CONICET and Secyt (UNC),  Argentina.

\section{Preliminaries}

Throughout this paper, $\ku$ will denote an algebraically closed field of characteristic zero. All categories will be finite, in the sense of \cite{EO}, abelian $\ku$-linear categories, and all functors will be additive $\ku$-linear. If $F:\ca\to \Do$ is a functor, we shall denote by $F^{\la}$, $F^{\ra}$ the left and right adjoint to $F$, if they exist.

\subsection{Monads} A monad over a category $\ca$ is an algebra object in the monoidal category  $\End(\ca)$. That is, a functor $T:\ca\to \ca$ equipped with natural morphisms
$$ \mu:T\circ T\to T, \, \, \, \eta:\Id\to T,$$
the multiplication and the unit of $T$, such that they satisfy associativity and unity axioms. The category of $T$-modules, $\ca^T$ is the category whose objects are pairs $(M,s)$, where $M$ is an object of $\ca$, $s:T(M)\to M$ is a morphism in $\ca$ such that
$$s T(s)= s \mu_M, \quad s \eta_M= \id_M. $$
The forgetful functor $\uc:\ca^T\to \ca$ has left adjoint given by the \textit{free} functor $\ele:\ca\to \ca^T$, $\ele(M)=(T(M), \mu_M). $

\bigbreak

If $(G:\Do\to \ca, F:\ca\to \Do)$ is an adjunction, with unit and counit $e:G\circ F\to \Id, $ $c:\Id\to F\circ G$, $ $ then $T=F\circ G$ is a monad. The \textit{comparison functor}
$$ \kappa: \Do\to \ca^T,$$
is defined by $\kappa(M)=(F(M), F(e_M)), $ for any $M\in \Do$. If the comparison functor $\kappa$ is an equivalence of categories, the adjunction  $(G,F)$ is said to be \textit{monadic}. Several results  concerning monadicity of an adjunction are given by Beck's Theorem. See for example \cite{M}. As a consequence of Beck's Monadicity Theorem, it follows that, if $F:\ca\to \Do$ is faithful and exact, then $(G,F)$ is monadic.

\subsection{Finite tensor categories}

A  tensor category $\ca$ is a monoidal rigid category, with simple unit object $\uno\in \ca$, satisfying certain finitness condition. We refer to \cite{EO} for more details on finite tensor categories.

 We shall denote by $\ca^{\rev}$, the tensor category whose underlying abelian category is $\ca$, with \textit{reverse} monoidal product 
    $$\ot^{\rev}:\ca\times \ca\to \ca, \quad X\ot^{\rev }Y=Y\ot X,$$ and associativity constraints
 $$a^{\rev}_{X,Y,Z}:( X\ot^{\rev }Y) \ot^{\rev} Z\to  X\ot^{\rev }(Y \ot^{\rev} Z),$$
  $$a^{\rev}_{X,Y,Z}:= a^{-1}_{Z,Y,X},$$
  for any $X,Y, Z\in \ca$. Here  $a_{X,Y,Z}:(X\ot Y)\ot Z\to X\ot (Y\ot Z) $ is the associativity of $\ca$.

\medbreak

The next result is \cite[Prop. 5.1]{BN}. It will be useful later.
\begin{prop}\label{dominant-monads} Let $\ca, \Do$ be tensor categories, and let $F:\ca\to \Do$ be an exact tensor functor with left adjoint $G:\Do\to \ca$.  Let $T= FG$ be the monad associated to $F$.
 Let $T = FG$ be the Hopf monad associated to the adjunction $(G,F)$. Then the following assertions are equivalent:
 \begin{itemize}
\item[(i) ] The functor F is dominant; 
 \item[(ii) ] The unit $\eta$ of the monad T is a monomorphism;
 \item[(iii) ] The monad $T$ is faithful;
 
 \item[(iv) ] The left adjoint of F is faithful;
 \item[(v) ] The right adjoint of F is faithful.\qed
 \end{itemize}
\end{prop}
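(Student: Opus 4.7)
The overall strategy is to organize the five equivalences around condition (i): conditions (iv) and (v) are each equivalent to (i) by elementary adjunction arguments, condition (iii) is equivalent to (iv) because any tensor functor with simple unit is automatically faithful, and condition (ii) is equivalent to (iii) by general monad theory. This splits a seemingly dense cycle into four short, essentially independent arguments.

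For (i)$\Leftrightarrow$(v), I would note that dominance of $F$ says every nonzero $Y\in\Do$ admits a nonzero morphism from some $F(X)$, i.e.\ $\Hom_\Do(F(X),Y)\neq 0$, which by adjunction equals $\Hom_\ca(X,F^{\ra}(Y))\neq 0$, and this is equivalent to $F^{\ra}(Y)\neq 0$; since $F^{\ra}$ is left exact between finite abelian categories, detecting the zero object amounts to being faithful. The proof of (i)$\Leftrightarrow$(iv) is entirely parallel, using the other adjunction $\Hom_\Do(Y,F(X))=\Hom_\ca(F^{\la}(Y),X)$; this already yields (iv)$\Leftrightarrow$(v) through (i) without any direct comparison of the two adjoints.

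For (iii)$\Leftrightarrow$(iv), the key is that a tensor functor with simple unit is automatically faithful: if $F(X)=0$, then applying $F$ to the evaluation epimorphism $X\ot X^*\to\uno$ would collapse $\uno$, which is impossible. Consequently $T(Y)=FG(Y)=0$ if and only if $G(Y)=0$, and the same "detects zero equals faithful" reduction applies to $G=F^{\la}$. For (ii)$\Leftrightarrow$(iii), the forward direction is naturality: $T(f)=0$ gives $\eta_Y f=T(f)\eta_X=0$, and $\eta_Y$ being mono forces $f=0$. For the converse, the monad identity $\mu\circ T\eta=\id_T$ shows that $T(\eta_X)$ is a split monomorphism, so any $k:K\to X$ with $\eta_X k=0$ satisfies $T(\eta_X)T(k)=T(\eta_X k)=0$ and hence $T(k)=0$; faithfulness of $T$ then yields $k=0$, proving $\eta_X$ is a monomorphism.

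The step that really warrants care — and the only place where the finite tensor category hypothesis is used essentially — is the reduction "detects zero objects equals faithful" for the one-sided adjoints $F^{\la}$ and $F^{\ra}$. This is where the Hopf-monad framework enters: both adjoints of a tensor functor between finite tensor categories turn out to be exact (not merely left or right exact), by the Nakayama/Radford formula that relates $F^{\la}$ and $F^{\ra}$ via a twist by a distinguished invertible object, and for exact additive functors between finite abelian categories the implication from detecting the zero object to full faithfulness is standard. Once this regularity of the adjoints is in hand, the remaining ingredients (adjunction rephrasing, faithfulness of a tensor functor with simple unit, and monad identities) are soft and no further obstacle remains.
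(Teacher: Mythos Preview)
The paper does not prove this proposition: it is quoted verbatim from \cite[Prop.~5.1]{BN} and closed with a \qed, so there is no argument in the paper to compare against.

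Your outline for (ii)$\Leftrightarrow$(iii) and (iii)$\Leftrightarrow$(iv) is correct, and you are right that the exactness of both adjoints (via the relation $F^{\la}\simeq D_\ca\otimes F^{\ra}(D_\Do\otimes -)$ for suitable distinguished invertibles) is the key technical input that makes ``reflects zero'' equivalent to ``faithful''. The implication (v)$\Rightarrow$(i) also goes through as you indicate: once $F^{\ra}$ is exact, $F$ preserves projectives, so $F(P)$ is projective for $P$ a projective generator of $\ca$, and the condition $\Hom_\Do(F(P),Y)\neq 0$ for all $Y\neq 0$ forces $F(P)$ to be a generator, whence every object of $\Do$ is a quotient of some $F(P^n)$.

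The gap is in the converse direction (i)$\Rightarrow$(v) (and likewise (i)$\Rightarrow$(iv)). You assert that ``dominance of $F$ says every nonzero $Y\in\Do$ admits a nonzero morphism from some $F(X)$'', but this is \emph{not} the definition of dominant---the definition is that every object of $\Do$ is a \emph{subquotient} of some $F(X)$---and the implication from the latter to the former is not an adjunction formality. Concretely, a simple $S$ may occur as a composition factor of $F(X)$ without lying in its top, so $\Hom_\Do(F(X),S)$ can vanish even though $S$ is a subquotient of $F(X)$; exactness of $F^{\ra}$ alone does not rule this out. What closes the gap is the Frobenius property of finite tensor categories (projective $=$ injective in $\Do$): if an indecomposable projective--injective $P$ is a subquotient of $F(X)$, then both the surjection onto $P$ and the inclusion of $P$ split, so $P$ is a direct summand of $F(X)$. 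Hence every indecomposable projective of $\Do$ is a summand of some $F(X)$, $F$ carries a projective generator to a projective generator, and then $\Hom_\Do(F(X),Y)\neq 0$ for all $Y\neq 0$ follows. You should make this step explicit; it is the same circle of ideas as the Radford/Nakayama formula you already invoke, but it is an additional ingredient, not a consequence of exactness of the adjoints.
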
 

If $A\in \ca$ is an algebra, then $\ca_A$, respectively ${}_A\ca_A$, denote the category of right $A$-modules in $\ca$, respectively the category of $A$-bimodules in $A$. We shall denote by 
$\uc:\ca_A\to \ca,$
the \textit{forgetful functor}, and a left adjoint $\ele_A:\ca\to \ca_A$, $\ele_A(V)=V\ot A$, the \textit{free functor}. A \textit{central algebra} for $\ca$ is an algebra in the Drinfeld center  $Z(\ca)$; that is a pair $(A,\sigma)$, where $\sigma$ is a half-braiding for $A$. It is commutative if $m\sigma = m$, where $m:A\ot A\to A$ is the multiplication of $A$.

\medbreak

The category ${}_A\ca_A$ has a monoidal product $\ot_A$. For this category to be rigid, we need to ask for more conditions on the algebra $A$. If $(A,\sigma)$ is a commutative central algebra, then there is a faithful functor $$\ca_A \to {}_A\ca_A,$$
described as follows. If $V\in \ca_A$, then we can define a left $A$-module structure using the half-braiding of $A$:
$$A\ot V\xrightarrow{ \sigma_V } V\ot A\to V. $$
The category $\ca_A$ inherits from ${}_A\ca_A$ a structure of  monoidal category with unit object $A$, such that the free functor $\ele_A:\ca\to \ca_A$ is a (strong) monoidal functor. Observe that both, the monoidal structure of $\ca_A$ and the monoidal structure of $\ele$ depend on the half-braiding $\sigma$.

The next result is \cite[Prop. 6.1]{BN}, and it will be used later. 
\begin{prop}\label{central-adj-alg} Let $\ca, \Do$ be tensor categories, and let $F:\ca\to \Do$ be an exact dominant tensor functor with right adjoint $R:\Do\to \ca$. The following statements hold.
\begin{itemize}
\item[1.] The object $R(\uno)=A$ has a natural structure of central commutative algebra in $\ca$.

\item[2.] If $R$ is exact and faithful, then $\ca_A$ is a tensor category, and there exists a tensor equivalence $K:\Do\to \ca_A$ such that $K\circ F\simeq \ele_A$ as tensor functors.\qed

\end{itemize}
\end{prop}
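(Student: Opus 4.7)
The plan is to treat the two parts of the proposition separately.

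For part (1), I would equip $A = R(\uno)$ with the algebra structure coming from the lax monoidal structure of $R$, which it inherits as the right adjoint of the strong monoidal functor $F$: the multiplication is the composite
$$ m\colon R(\uno)\otimes R(\uno)\longrightarrow R(\uno\otimes \uno)\simeq R(\uno), $$
and the unit $u\colon \uno\to R(\uno)$ is the adjunction unit at $\uno$; associativity and unitality reduce to the hexagon and triangle axioms for $F$. For the half-braiding, I would invoke the projection formulas
$$ X\otimes R(Y)\simeq R(F(X)\otimes Y),\qquad R(Y)\otimes X\simeq R(Y\otimes F(X)), $$
which hold for any exact tensor functor $F$ between finite tensor categories (this is the content of $F$ being a Hopf monad, as in the setting of Proposition~\ref{dominant-monads}). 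Specializing to $Y = \uno$ produces a natural isomorphism $A\otimes X\simeq R(F(X))\simeq X\otimes A$, and I would take $\sigma_X\colon A\otimes X\to X\otimes A$ to be this composite. Naturality and the hexagon axioms for $\sigma$, together with compatibility with $m$, are diagram chases in the coherences of $F$; commutativity $m\sigma_A = m$ reduces, after applying $F$ and invoking strong monoidality, to the symmetry of the unitor $\uno\otimes\uno\simeq\uno$.

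For part (2), I would use the monadicity framework recalled in the preliminaries. Let $T = R\circ F\colon \ca\to \ca$. From $T(X) = R(F(X)\otimes\uno)\simeq X\otimes R(\uno) = X\otimes A$ the monad $T$ is identified with the free-module monad $(-)\otimes A$ (compatibly with units and multiplications), so $\ca^T \simeq \ca_A$ as categories. Under the hypothesis that $R$ is exact and faithful, Beck's monadicity theorem implies that the adjunction $F\dashv R$ is monadic, hence the comparison functor $\kappa\colon \Do \to \ca^T\simeq \ca_A$ is an equivalence. Unwinding the identifications, $\kappa(Y)=R(Y)$ with right $A$-action induced from the lax constraint $R(Y)\otimes R(\uno)\to R(Y\otimes\uno)\simeq R(Y)$, and $\kappa\circ F = \ele_A$ as functors.

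It remains to upgrade $\kappa$ to a tensor equivalence $K$ whose target $\ca_A$ inherits the claimed monoidal structure. Transporting the tensor structure of $\Do$ along $\kappa$ yields a monoidal structure on $\ca_A$ with unit $\kappa(\uno) = A$. To identify this transported product with the product $\otimes_A$ built from the half-braiding $\sigma$ of part (1), I would verify that the left $A$-action on each $\kappa(Y) = R(Y)$ obtained by combining $\sigma$ with the right $A$-action coincides with the action pulled back through $\kappa$ from left multiplication by $A$ in $\Do$; this is essentially encoded in the projection-formula construction of $\sigma$. Once this is in hand, the monoidal natural isomorphism $K\circ F\simeq \ele_A$ is automatic from the strong monoidal structure of $F$, because both functors act as $X\mapsto X\otimes A$ with the same free-module coherences. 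The main obstacle, in my view, is exactly this compatibility step: checking that the monoidal product transported from $\Do$ genuinely agrees with $\otimes_A$ for the half-braiding of part (1). Everything else --- the algebra axioms, the half-braiding coherences, identification of $T$ with $(-)\otimes A$, and the application of Beck --- is formal and reduces to mate calculus and the standard properties of exact tensor functors between finite tensor categories.
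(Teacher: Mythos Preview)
The paper does not give its own proof of this proposition: it is stated with a \qed and attributed verbatim to \cite[Prop.~6.1]{BN}. So there is nothing to compare your argument against in this paper beyond the citation.

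That said, your outline is correct and is essentially the argument of Brugui\`eres--Natale. The algebra structure on $A=R(\uno)$ via the lax monoidal structure of $R$, the half-braiding via the projection formulas (valid because an exact tensor functor between finite tensor categories has a Hopf-monad structure, exactly the context of Proposition~\ref{dominant-monads}), the identification of the monad $T=RF$ with $(-)\otimes A$, and the application of Beck under the hypothesis that $R$ is exact and faithful --- all of this matches \cite{BN}. Your honest flag about the ``main obstacle'' (matching the transported tensor product on $\ca_A$ with $\otimes_A$ for the specific half-braiding $\sigma$) is precisely the point where \cite{BN} does the real work, so your assessment of where the content lies is accurate.
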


 \section{Representations of tensor categories} A  left \emph{module} category over  
$\ca$ is a  category $\Mo$ together with a 
bifunctor $\rhd: \ca \times \Mo \to \Mo$, exact in each variable,  endowed with 
 natural associativity
and unit isomorphisms 
$$m_{X,Y,M}: (X\otimes Y)\triangleright   M \to X\triangleright  
(Y\triangleright M), \ \ \ell_M: \uno \triangleright  M\to M.$$ 
These isomorphisms are subject to the following conditions:
\begin{equation}\label{left-modulecat1} m_{X, Y, Z\triangleright M}\; m_{X\otimes Y, Z,
M}= (\id_{X}\triangleright m_{Y,Z, M})\;  m_{X, Y\otimes Z, M}(a_{X,Y,Z}\triangleright\id_M),
\end{equation}
\begin{equation}\label{left-modulecat2} (\id_{X}\triangleright \ell_M)m_{X,{\bf
1} ,M}= r_X \triangleright \id_M,
\end{equation} for any $X, Y, Z\in\C, M\in\Mo.$ Here $a$ is the associativity constraint of $\C$.
Sometimes we shall also say  that $\Mo$ is a (left) $\ca$-\emph{module category} or a (left) representation of $\ca$.

\medbreak

Let $\Mo$ and $\Mo'$ be a pair of $\C$-modules. A\emph{ module functor} is a pair $(F,c)$, where  $F:\Mo\to\Mo'$  is a functor equipped with natural isomorphisms
$$c_{X,M}: F(X\triangleright M)\to
X\triangleright F(M),$$ $X\in  \ca$, $M\in \Mo$,  such that
for any $X, Y\in
\ca$, $M\in \Mo$:
\begin{align}\label{modfunctor1}
(\id_X \triangleright  c_{Y,M})c_{X,Y\triangleright M}F(m_{X,Y,M}) &=
m_{X,Y,F(M)}\, c_{X\otimes Y,M}
\\\label{modfunctor2}
\ell_{F(M)} \,c_{\uno ,M} &=F(\ell_{M}).
\end{align}

There is a composition
of module functors: if $\Mo''$ is a $\C$-module category and
$(G,d): \Mo' \to \Mo''$ is another module functor then the
composition
\begin{equation}\label{modfunctor-comp}
(G\circ F, e): \Mo \to \Mo'', \qquad  e_{X,M} = d_{X,F(M)}\circ
G(c_{X,M}),
\end{equation} is
also a module functor.

\smallbreak  

A \textit{natural module transformation} between  module functors $(F,c)$ and $(G,d)$ is a 
 natural transformation $\theta: F \to G$ such
that
\begin{gather}
\label{modfunctor3} d_{X,M}\theta_{X\triangleright M} =
(\id_{X}\triangleright \theta_{M})c_{X,M},
\end{gather}
 for any $X\in \ca$, $M\in \Mo$. The vector space of natural \textit{module} transformations will be denoted by $\Nat_{\!m}(F,G)$. Two module functors $F, G$ are \emph{equivalent} if there exists a natural module isomorphism
$\theta:F \to G$. We denote by $\Fun_{\ca}(\Mo, \Mo')$ the category whose
objects are module functors $(F, c)$ from $\Mo$ to $\Mo'$ and arrows module natural transformations. 

\medbreak
Two $\C$-module categories $\Mo$ and $\Mo'$ are {\em equivalent} if there exist module functors $F:\Mo\to
\Mo'$, $G:\Mo'\to \Mo$, and natural module isomorphisms
$\Id_{\Mo'} \to F\circ G$, $\Id_{\Mo} \to G\circ F$.
\medbreak
A module is
{\em indecomposable} if it is not equivalent to a direct sum of
two non trivial modules. Recall from \cite{EO}, that  a
module $\Mo$ is \emph{exact} if   for any
projective object
$P\in \ca$ the object $P\triangleright M$ is projective in $\Mo$, for all
$M\in\Mo$. If $\Mo$ is an exact indecomposable module category over $\ca$, the dual category $\ca^*_\Mo=\End_\ca(\Mo)$ is a finite tensor category \cite{EO}. The tensor product is the composition of module functors.

\medbreak
 A \emph{right module category} over $\ca$
 is a finite  category $\Mo$ equipped with an exact
bifunctor $\triangleleft:  \Mo\times  \ca\to \Mo$ and natural   isomorphisms 
$$\widetilde{m}_{M, X,Y}: M\triangleleft (X\ot Y)\to (M\triangleleft X) \triangleleft Y, \quad r_M:M\triangleleft \uno\to M$$ such that
\begin{equation}\label{right-modulecat1} \widetilde{m}_{M\triangleleft X, Y ,Z }\; \widetilde{m}_{M,X ,Y\ot Z } (\id_M \triangleleft a_{X,Y,Z})=
(\widetilde{m}_{M,X , Y}\triangleleft \id_Z)\, \widetilde{m}_{M,X\ot Y ,Z },
\end{equation}
\begin{equation}\label{right-modulecat2} (r_M\triangleleft \id_X)  \widetilde{m}_{M,\uno, X}= \id_M\triangleleft l_X.
\end{equation}

If $\Mo,  \Mo'$ are right $\ca$-modules, a module functor from $\Mo$ to $  \Mo'$ is a pair $(T, d)$ where
$T:\Mo \to \Mo'$ is a  functor and $d_{M,X}:T(M\triangleleft X)\to T(M)\triangleleft X$ are natural  isomorphisms
such that for any $X, Y\in
\ca$, $M\in \Mo$:
\begin{align}\label{modfunctor11}
( d_{M,X}\ot \id_Y)d_{M\triangleleft X, Y}T(m_{M, X, Y}) &=
m_{T(M), X,Y}\, d_{M, X\ot Y},
\\\label{modfunctor21}
r_{T(M)} \,d_{ M,\uno} &=T(r_{M}).
\end{align}

We recall the next well-known result since it will be needed later. The proof it is done by a straightforward calculation.
Let $\Mo, \Mo'$ be left $\ca$-module categories. Assume that $F:\Mo\to\Mo'$ is a functor with left adjoint $F^{\la}:\Mo'\to\Mo$, with associated natural isomorphisms
$$ \alpha_{M,N}:\Hom_{\Mo}(F^{\la}(M),N)\to \Hom_{\Mo'}(M,F(N)).$$
\begin{lema} Let $e: F^{\la}\circ F\to \Id$, $\eta:\Id\to F\circ F^{\la}$ be the unit and counit of the adjunction. Also assume that $(F,c):\Mo\to\Mo'$ is a module functor. Then $F^{\la}$ has structure of module functor, with structure given by
$$\widetilde{c}_{X,M}:F^{\la}(X\triangleright M)\to X\triangleright F^{\la}(M), $$
$$ \widetilde{c}_{X,M}= e_{X\triangleright F^{\la}(M)} F^{\la}(c^{-1}_{X, F^{\la}(M)}) F^{\la}(\id_X \triangleright \eta_M),$$
for any $X\in \ca$, $M\in \Mo'.$ Aslo, equation
\begin{equation}\label{adjointp-mod-funct}
\alpha^{-1}_{X\triangleright F(M), X\triangleright M}(c^{-1}_{X,M})= (\id_X \triangleright e_M) \widetilde{c}_{X,F(M)}.
\end{equation}
is fulfilled, for any $X\in \ca,$ $M\in \Mo'.$\qed
\end{lema}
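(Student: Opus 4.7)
The plan is to recognize $\widetilde{c}_{X,M}$ as the mate of $c^{-1}_{X,F^{\la}(M)}\circ(\id_X\triangleright\eta_M)$ under the adjunction $(F^{\la},F)$: using the identity $\alpha^{-1}(g)=e\circ F^{\la}(g)$, the given formula is exactly $\alpha^{-1}_{X\triangleright M,\,X\triangleright F^{\la}(M)}\bigl(c^{-1}_{X,F^{\la}(M)}\circ(\id_X\triangleright\eta_M)\bigr)$. Naturality of $\widetilde{c}$ in $M$ and $X$ then follows from naturality of $c$, $\eta$, $e$ and of $\alpha$. For invertibility, in a module category over the rigid tensor category $\ca$ the action functor $X\triangleright -$ admits both adjoints (given by the duals of $X$), so mate calculus guarantees that the mate of the natural isomorphism $c^{-1}$ is again a natural isomorphism; alternatively one may exhibit $\widetilde{c}^{-1}$ directly, using $c$ in place of $c^{-1}$.

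The two module-functor axioms for $\widetilde{c}$ must then be verified. For the unit axiom \eqref{modfunctor2}, I would unfold $\widetilde{c}_{\uno,M}$, substitute $c^{-1}_{\uno,F^{\la}(M)}=F(\ell_{F^{\la}(M)})^{-1}\circ\ell_{FF^{\la}(M)}$ (which is axiom \eqref{modfunctor2} for $(F,c)$), use naturality of $\ell$ to commute $\id_\uno\triangleright\eta_M$ with $\ell$, and collapse $e_{F^{\la}(M)}\circ F^{\la}(\eta_M)=\id$ by the triangle identity; this produces the required $\widetilde{c}_{\uno,M}=\ell_{F^{\la}(M)}^{-1}\circ F^{\la}(\ell_M)$. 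The pentagon \eqref{modfunctor1} for $\widetilde{c}$ is the main obstacle: the cleanest route is to apply the bijection $\alpha$ to both sides of the desired identity, whereupon the problem reduces to an equality in $\Hom_{\Mo'}$ that follows from the pentagon \eqref{modfunctor1} for $(F,c)$ together with naturality of $\eta$ and of the associator $m$. This step is mechanical but long, and is where most of the work sits.

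Finally, equation \eqref{adjointp-mod-funct} is a short direct calculation. The left-hand side equals $e_{X\triangleright M}\circ F^{\la}(c^{-1}_{X,M})$ by definition of $\alpha^{-1}$. For the right-hand side, pulling $\id_X\triangleright e_M$ through $e_{X\triangleright F^{\la}F(M)}$ by naturality of $e$ rewrites $(\id_X\triangleright e_M)\circ\widetilde{c}_{X,F(M)}$ as
\[
e_{X\triangleright M}\circ F^{\la}\bigl(F(\id_X\triangleright e_M)\circ c^{-1}_{X,F^{\la}F(M)}\circ(\id_X\triangleright\eta_{F(M)})\bigr).
\]
Moving $F(\id_X\triangleright e_M)$ past $c^{-1}$ by naturality of $c^{-1}$ in its second argument yields $c^{-1}_{X,M}\circ(\id_X\triangleright(F(e_M)\circ\eta_{F(M)}))$, and the triangle identity $F(e_M)\circ\eta_{F(M)}=\id_{F(M)}$ collapses the inner factor to $c^{-1}_{X,M}$, matching the left-hand side.
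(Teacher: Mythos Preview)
Your proposal is correct. The paper omits the proof entirely, labeling it a ``straightforward calculation,'' and your approach---recognizing $\widetilde{c}$ as the mate of $c^{-1}$ under the adjunction, verifying the module-functor axioms from those of $(F,c)$ by naturality, and deriving \eqref{adjointp-mod-funct} from naturality of $e$, of $c^{-1}$, and the triangle identity---is precisely the standard way to execute that calculation.
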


\subsection{\'Etale algebras in finite tensor categories}\label{section:etale} 

We propose the following definition of \'etale algebras in the non-semisimple case, generalizing \cite[Definition 3.1]{DMNO}. Assume that $\ca$ is a braided tensor category.

\begin{defi}\label{definition:etale} An algebra $A\in \ca$ is \textit{\'etale} if:
\begin{itemize}
\item[$\bullet$] $A$  is commutative;

\item[$\bullet$] the category $\ca_A$ is an exact indecomposable left module category over $\ca$;

\item[$\bullet$] $A$ is simple as a right $A$-module; that is $A\in \ca_A$ is a simple object.

\end{itemize}  It is \textit{connected} if $\dim_\ku \Hom_\ca(\uno, A)=1$.
\end{defi}

\begin{rmk}  The exactness and indecomposability of $\ca_A$ as a left $\ca$-module category  may be a superfluous condition. We expect that $A$ being right simple implies that   $\ca_A$ is  an exact and indecomposable  module category.
\end{rmk}

\begin{rmk} Condition of $\ca_A$ being exact and indecomposable  module category is equivalent to ${}_A\ca_A$ being a rigid monoidal category. Indeed, there is a monoidal equivalence $ {}_A\ca_A\simeq \ca^*_{\ca_A}$. One can prove that this is also equivalent to $\ca_A$ being a rigid monoidal category, since rigidity of objects in $\ca_A$ is iherited by rigidity in ${}_A\ca_A$.
\end{rmk}

There is a canonical correspondence between \'etale algebras and dominant tensor functors with an exact faithful right adjoint.  The proof of the next result follows from Proposition \ref{central-adj-alg}. See also \cite[Lemma 3.5]{DMNO}.

\begin{cor}\label{from adj-to-etale} If $F:Z(\ca)\to \Do$ is an exact dominant tensor functor with an exact faithful right adjoint $R:\Do\to Z(\ca)$, then $(A,\sigma)=R(\uno)$ is a connected \'etale algebra in the center $Z(\ca)$.\qed
\end{cor}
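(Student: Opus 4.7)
The plan is to invoke Proposition \ref{central-adj-alg} directly with $\ca$ replaced by $Z(\ca)$. Since $F:Z(\ca)\to \Do$ is exact and dominant with exact faithful right adjoint $R$, part (1) of the proposition produces on $A=R(\uno)$ a natural structure of commutative central algebra, yielding the half-braiding $\sigma$ and the commutativity required by the first bullet of Definition \ref{definition:etale}. Part (2) then produces a tensor equivalence $K:\Do\to Z(\ca)_A$ satisfying $K\circ F\simeq \ele_A$ as tensor functors.

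Next I would verify the remaining two bullets of Definition \ref{definition:etale}. Since $\Do$ is a finite tensor category it is rigid, and the tensor equivalence $K$ transports rigidity to $Z(\ca)_A$; by the remark following Definition \ref{definition:etale}, this forces $Z(\ca)_A$ to be an exact indecomposable module category over $Z(\ca)$. For simplicity of $A$ as a right $A$-module, I would trace the unit object: since $F$ is a tensor functor, $F(\uno)\cong \uno_\Do$, so
$$K(\uno_\Do)\cong K(F(\uno))\cong \ele_A(\uno)= A$$
in $Z(\ca)_A$. As $\uno_\Do$ is simple in the tensor category $\Do$ and $K$ is an equivalence, $A$ must be simple as a right $A$-module.

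Connectedness follows directly from the adjunction:
$$\Hom_{Z(\ca)}(\uno, A)=\Hom_{Z(\ca)}(\uno, R(\uno))\cong \Hom_\Do(F(\uno),\uno)\cong \Hom_\Do(\uno,\uno)=\ku,$$
which is one-dimensional because $\uno$ is simple in the tensor category $\Do$.

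The main subtlety, rather than a genuine obstacle, is to make sure that the ``central commutative algebra'' structure granted by Proposition \ref{central-adj-alg} is interpreted as an algebra in $Z(\ca)$ with its own braiding (giving the half-braiding $\sigma$ and the equation $m\sigma=m$), not as an object in $Z(Z(\ca))$; once this is unpacked, all three bullets of Definition \ref{definition:etale} together with connectedness drop out of the equivalence $K$ and the adjunction $(F,R)$ essentially for free.
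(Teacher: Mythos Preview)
Your proposal is correct and follows exactly the route the paper indicates: the paper's proof consists only of the sentence ``The proof of the next result follows from Proposition \ref{central-adj-alg}. See also \cite[Lemma 3.5]{DMNO}'' together with a \qed, so what you have written is simply a fleshed-out version of that citation, verifying each bullet of Definition \ref{definition:etale} and connectedness via the tensor equivalence $K:\Do\to Z(\ca)_A$ and the adjunction isomorphism. The subtlety you flag about reading ``central commutative'' in $Z(\ca)$ as commutativity with respect to the braiding of $Z(\ca)$ (rather than landing in $Z(Z(\ca))$) is real but standard, and is precisely what the reference to \cite[Lemma 3.5]{DMNO} is meant to cover.
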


\subsection{The internal Hom}\label{subsection:internal hom} Let $\ca$ be a  tensor category and $\Mo$ be  a left $\C$-module category. For any pair of objects $M, N\in\Mo$, the \emph{internal Hom} is an object $\uhom(M,N)\in \C$ representing the left exact functor $$\Hom_{\Mo}(-\triangleright M,N):\ca^{\op}\to \vect_\ku.$$ This means that, there are natural isomorphisms, one the inverse of each other,
\begin{equation}\label{Hom-interno}\begin{split}\phi^X_{M,N}:\Hom_{\ca}(X,\uhom(M,N))\to \Hom_{\Mo}(X\triangleright M,N), \\
\psi^X_{M,N}:\Hom_{\Mo}(X\triangleright M,N)\to \Hom_{\ca}(X,\uhom(M,N)),
\end{split}
\end{equation}
 for all $M, N\in \Mo$, $X\in\ca$. 
Sometimes we shall denote the internal Hom of the module category $\Mo$ by $\uhom_\Mo$ to emphasize that it is related to $\Mo$.

Naturality of $\psi$ implies that diagrams
\begin{equation}\label{comm-psi1}
\xymatrix{
\Hom_{\Mo}(X\triangleright M,N)\ar[d]^{\beta \mapsto f\circ\beta}\ar[rr]^{\psi^X_{M,N}}&& \Hom_{\ca}(X,\uhom(M,N)) \ar[d]_{\alpha\mapsto \uhom(M,f)\circ\alpha} \\
\Hom_{\Mo}(X\triangleright M,\widetilde{N}) \ar[rr]^{\psi^X_{M,\widetilde{N}}}&& \Hom_{\ca}(X,\uhom(M,\widetilde{N})), }
\end{equation}
\begin{equation}\label{comm-psi2}
\xymatrix{
\Hom_{\Mo}(X\triangleright \widetilde{M},N)\ar[d]^{\beta \mapsto \beta(\id_X\triangleright g) }\ar[rr]^{\psi^X_{\widetilde{M},N}}&& \Hom_{\ca}(X,\uhom(\widetilde{M},N)) \ar[d]_{\alpha\mapsto \uhom(g,N)\circ \alpha} \\
\Hom_{\Mo}(X\triangleright M,N) \ar[rr]^{\psi^X_{M, N}}&& \Hom_{\ca}(X,\uhom(M,N)), }
\end{equation}
\begin{equation}\label{comm-psi3}
\xymatrix{
\Hom_{\Mo}(Y\triangleright M,N)\ar[d]^{\beta \mapsto \beta(h\triangleright \id_M) }\ar[rr]^{\psi^Y_{M,N}}&& \Hom_{\ca}(Y,\uhom(M,N)) \ar[d]_{\alpha\mapsto  \alpha\circ h} \\
\Hom_{\Mo}(X\triangleright M,N) \ar[rr]^{\psi^X_{M, N}}&& \Hom_{\ca}(X,\uhom(M,N)), }
\end{equation}
commute, for any pair of morphism $f:N\to \widetilde{N}$, $g:M\to \widetilde{M}$  in $\Mo$, and any morphism $h:X\to Y$ in $\ca$. Also, naturality of $\phi$, implies that  diagrams
\begin{equation}\label{comm-phi1}
\xymatrix{
\Hom_{\ca}(X,\uhom(\widetilde{M},N))\ar[d]^{\beta \mapsto \uhom(g,N)\circ\beta}\ar[rr]^{\phi^X_{\widetilde{M},N}}&& \Hom_{\Mo}(X\triangleright \widetilde{M},N) \ar[d]_{\alpha\mapsto \alpha (\id_X\triangleright g)} \\
\Hom_{\ca}(X,\uhom(M,N)) \ar[rr]^{\phi^X_{M,N}}&& \Hom_{\Mo}(X\triangleright M,N), }
\end{equation}
\begin{equation}\label{comm-phi2}
\xymatrix{
\Hom_{\ca}(X,\uhom(M,N))\ar[d]^{\beta \mapsto \uhom(M,f)\beta}\ar[rr]^{\phi^X_{M,N}}&& \Hom_{\Mo}(X\triangleright M,N) \ar[d]_{\alpha\mapsto f\alpha} \\
\Hom_{\ca}(X,\uhom(M,\widetilde{N})) \ar[rr]^{\phi^X_{M,\widetilde{N}}}&& \Hom_{\Mo}(X\triangleright M,\widetilde{N}), }
\end{equation}
\begin{equation}\label{comm-phi3}
\xymatrix{
\Hom_{\ca}(Y,\uhom(M,N))\ar[d]^{\beta \mapsto \beta\circ h}\ar[rr]^{\phi^Y_{M,N}}&& \Hom_{\Mo}(Y\triangleright M,N) \ar[d]_{\alpha\mapsto \alpha (h\triangleright\id_M)} \\
\Hom_{\ca}(X,\uhom(M,N)) \ar[rr]^{\phi^X_{M,N}}&& \Hom_{\Mo}(X\triangleright M,N), }
\end{equation}

 commutes for any $f:N\to  \widetilde{N}$, $g:M\to\widetilde{M}$ morphisms in $\Mo$, and any morphism $h:X\to Y$ in $\ca$. The next result will be needed later. 
\begin{lema}\label{int-hom-adjoint}\cite[Lemma 3]{FS} Let $\Mo$ be an  exact  module category over $\ca$, and $F:\Mo\to \Mo$ be a $\ca$-module functor with left adjoint $F^{\la}:\Mo\to \Mo$. Then, there are natural isomorphisms
$$ \xi^F_{M,N}:\uhom(M, F(N))\to  \uhom(F^{\la}(M), N).$$\qed
\end{lema}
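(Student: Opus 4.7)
The plan is to invoke Yoneda's lemma: for each $X\in\ca$, I will exhibit a natural bijection between $\Hom_\ca(X,\uhom(M,F(N)))$ and $\Hom_\ca(X,\uhom(F^{\la}(M),N))$, and then read off the desired morphism $\xi^F_{M,N}$ from Yoneda. The bijection will arise as a four-step composition of already known natural isomorphisms, none of which require new ideas.

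First, I use the defining adjunction of the internal Hom to pass from $\Hom_\ca(X,\uhom(M,F(N)))$ to $\Hom_\Mo(X\triangleright M, F(N))$ via $\phi^X_{M,F(N)}$. Second, I apply the adjunction $(F^{\la},F)$ to rewrite this as $\Hom_\Mo(F^{\la}(X\triangleright M), N)$. Third, I precompose with the inverse of the natural module-structure isomorphism $\widetilde{c}_{X,M}:F^{\la}(X\triangleright M)\to X\triangleright F^{\la}(M)$ supplied by the preceding lemma (which endows the left adjoint of a module functor with a canonical module structure), landing in $\Hom_\Mo(X\triangleright F^{\la}(M), N)$. Fourth, $\psi^X_{F^{\la}(M),N}$ produces $\Hom_\ca(X,\uhom(F^{\la}(M),N))$, closing the chain.

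Naturality of the resulting composite in $X$, which is what Yoneda needs, follows from \eqref{comm-phi3} and \eqref{comm-psi3} for the first and last arrow, from the naturality of the adjunction $(F^{\la},F)$ in its source object for the second arrow, and from naturality of $\widetilde{c}_{-,M}$ in its $\ca$-argument for the third. Yoneda's lemma then delivers a unique morphism $\xi^F_{M,N}:\uhom(M,F(N))\to \uhom(F^{\la}(M),N)$; since each step in the chain is a bijection, $\xi^F_{M,N}$ is automatically an isomorphism.

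The only slightly delicate point is naturality of $\xi^F_{M,N}$ in $M$ and $N$. Naturality in $N$ is controlled by \eqref{comm-psi1} and \eqref{comm-phi2} together with naturality of the $(F^{\la},F)$ adjunction in its target object, while naturality in $M$ is controlled by \eqref{comm-psi2} and \eqref{comm-phi1} together with naturality of $\widetilde{c}_{X,-}$ and of the unit of $(F^{\la},F)$. I do not expect the exactness hypothesis on $\Mo$ to play a direct role in this particular argument; it enters only tacitly, through the good behaviour of internal Homs and of adjoints in the ambient setting.
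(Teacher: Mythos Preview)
Your argument is correct and coincides with the paper's own approach: the lemma is cited from \cite{FS} without proof, but the remark immediately following it records the explicit formula
\[
\xi^F_{M, N}= \psi^{Y}_{F^{\la}(M),N}\big(\alpha^{-1}_{Y\triangleright M, N}(\phi^Y_{M,F(N)}(\id)) \,\tilde{d}^{-1}_{Y,M}\big),\qquad Y=\uhom(M,F(N)),
\]
which is exactly what your four-step Yoneda chain produces upon evaluating at $X=Y$ and the identity. Your observation that exactness of $\Mo$ is not directly used is also accurate.
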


\begin{rmk} Isomorphisms $\xi^F$ can be explicitly described using the proof of \cite[Lemma 3]{FS}. One can see that 
\begin{equation}\label{xi-definition} 
\xi^F_{M, N}= \psi^{Y}_{F^{\la}(M),N}\big(\alpha^{-1}_{Y\triangleright M, N}(\phi^Y_{M,F(N)}(\id)) \tilde{d}^{-1}_{Y,M}\big).
\end{equation}
Here $Y= \uhom(M, F(N))$,  $\tilde{d}$ is the module structure of the functor $ F^{\la}$, and
$$\alpha_{M,N}:\Hom_\Mo(F^{\la}(M),N)\to \Hom_\Mo( M,F(N))$$
are the natural isomorphisms of the adjunction $(F^{\la},F).$
\end{rmk}

Next, we shall define a collection of morphisms that will be used continuosly. If $X\in \ca$, $M, N\in \Mo$, set 
$$\coev^\Mo_{ M}:\uno\to  \uhom(M, M), \quad \ev^\Mo_{M, N}:\uhom(M,N)\triangleright M\to N,$$
$$\coev^\Mo_{ M}=\psi^\uno_{M, M}(\id_{ M}),  \quad\ev^\Mo_{M, N}= \phi^{\uhom(M,N)}_{M,N}(\id_{\uhom(M,N)}).$$

Define also $f_M=\ev^\Mo_{M,M}(\id_{\uhom(M,M)}\triangleright \ev^\Mo_{M, M})$, and
\begin{equation}\label{compositionM}
\comp^\Mo_M:\uhom(M,M)\ot \uhom(M,M)\to \uhom(M,M),
\end{equation}
$$\comp^\Mo_M=\psi^{\uhom(M,M)\ot \uhom(M,M)}_{M,M}(f_M). $$

For any $M\in \Mo$ denote by $R_M:\ca\to \Mo$ the   $\ca$-module functor $R_M(X)=X\triangleright M$. Its right adjoint $R^{ra}_M: \Mo\to \ca$ is then $ R^{\ra}_M(N)=\uhom(M,N)$. The functor  $ R^{\ra}_M$ is also a $\ca$-module functor.  Denote by
\begin{equation}\label{mod-struct-a}
  \mathfrak{a}_{X,M,N}: \uhom(M, X\triangleright  N) \to X\ot \uhom(M,  N)
\end{equation}
the left $\ca$-module structure of $R^{\ra}_M$. Let $ \mathfrak{b}^1_{X,M,N}:\uhom(X\triangleright  M, N)\to \uhom(M,N)\ot X^* $  
be the isomorphisms induced by the natural isomorphisms
\begin{gather*}
  \Hom_\ca(Z, \uhom(X \triangleright M, N))
  \simeq \Hom_{\mathcal{M}}(Z \triangleright( X \triangleright M), N)\\ \simeq \Hom_{\mathcal{M}}((Z \ot X)\triangleright M, N)
  \simeq \Hom_{\mathcal{C}}(Z \otimes X, \uhom(M, N))\\
  \simeq \Hom_{\mathcal{C}}(Z, \uhom(M, N) \otimes X^*),
\end{gather*}
for any $X, Z\in \ca$, $M, N\in \Mo$. Define also
$$ \mathfrak{b}_{X,M,N}:\uhom(X\triangleright M, N)\ot X\to \uhom(M,N)$$ 
as the composition
$$\mathfrak{b}_{X,M,N}=(\id\ot \ev_X)  (\mathfrak{b}^1_{X,M,N}\ot \id_X).$$
\subsection{The canonical action of $\vect_\ku$ }\label{Section:vect-action}

Any abelian $\ku$-linear category is a $\vect_\ku$-module category. If $\Mo$ is such a category, there exists a canonical action of $\vect_\ku$ on $\Mo$
$$ \blacktriangleright: \vect_\ku\times \Mo\to \Mo,$$
determined by
$$\Hom_\ku(V,\Hom_\Mo(A,B))\simeq \Hom_\Mo(V\blacktriangleright A,B) $$
for any $V\in \vect_\ku$, $A, B\in \Mo$. Any additive $\ku$-linear functor, results a $\vect_\ku$-module functor. 

\medbreak

There exists a compatibility of the action of $\vect_\ku$ and the action of any tensor category.
\begin{lema}\label{compati-actions} If $\ca$ is a tensor category and $\Mo$ is a left $\ca$-module category with action given by
$\rhd: \ca \times \Mo \to \Mo$, then there are natural isomorphisms
$$ (V \blacktriangleright \uno) \rhd M\simeq V \blacktriangleright  M,$$
for any $V\in \vect_\ku$, $X\in \ca$, $M\in \Mo$. \qed
\end{lema}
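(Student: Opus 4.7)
The plan is to exploit two observations made in Section~\ref{Section:vect-action}: first, that any additive $\ku$-linear functor between $\ku$-linear abelian categories carries a canonical $\vect_\ku$-module functor structure with respect to the actions $\blacktriangleright$; and second, that the action bifunctor $\triangleright : \ca \times \Mo \to \Mo$ is exact and $\ku$-linear in each variable. Together these imply that, for any fixed $M \in \Mo$, the functor $(-) \triangleright M : \ca \to \Mo$ is a $\vect_\ku$-module functor, and hence intertwines the two canonical $\vect_\ku$-actions: there are natural isomorphisms
$$(V \blacktriangleright X) \triangleright M \simeq V \blacktriangleright (X \triangleright M)$$
for all $V \in \vect_\ku$, $X \in \ca$, $M \in \Mo$. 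Specialising to $X = \uno$ and composing with the unit constraint $\ell_M : \uno \triangleright M \to M$ of the module category then yields the desired isomorphism $(V \blacktriangleright \uno) \triangleright M \simeq V \blacktriangleright M$.

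A more hands-on Yoneda argument gives the same conclusion and makes naturality transparent. For any test object $N \in \Mo$, one assembles a chain of natural isomorphisms
\begin{align*}
\Hom_\Mo\bigl((V \blacktriangleright \uno) \triangleright M,\, N\bigr)
&\simeq \Hom_\ca\bigl(V \blacktriangleright \uno,\, \uhom(M,N)\bigr) \\
&\simeq \Hom_\ku\bigl(V,\, \Hom_\ca(\uno, \uhom(M,N))\bigr) \\
&\simeq \Hom_\ku\bigl(V,\, \Hom_\Mo(M, N)\bigr) \\
&\simeq \Hom_\Mo(V \blacktriangleright M,\, N),
\end{align*}
by applying in succession the internal-Hom adjunction \eqref{Hom-interno}, the defining property of $\blacktriangleright$ on $\ca$, the unit isomorphism $\ell_M$, and finally the defining property of $\blacktriangleright$ on $\Mo$. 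The Yoneda lemma then provides the claimed isomorphism.

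I do not anticipate any genuine obstacle here: each link in either argument is a direct invocation of a universal property or of a structural fact recalled in the preliminaries. The only small bookkeeping is to verify that the resulting isomorphism is natural in both $V$ and $M$, but this is automatic because every step in the chain above is itself natural in both variables, and because the module functor structure on $(-) \triangleright M$ produced by the first approach is natural in $M$ by construction.
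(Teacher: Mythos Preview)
Your proposal is correct. The paper itself offers no proof of this lemma: the statement ends with a \qed\ and no argument is given, so there is nothing to compare against at the level of strategy. Both of your approaches are valid in the paper's setting. The first one is the most direct, since the paper explicitly records in Section~\ref{Section:vect-action} that any additive $\ku$-linear functor is automatically a $\vect_\ku$-module functor; applying this to $(-)\triangleright M$ and specialising at $X=\uno$ is exactly the intended one-line justification. Your Yoneda argument is also fine, though note it implicitly uses the existence of the internal Hom, which is guaranteed here because all categories are assumed finite.
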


\section{Bimodule categories and the relative center}

Assume that $\ca, \Do, \Ec$ are   tensor categories. A $(\ca, \Do)-$\emph{bimodule category}  is a category $\Mo$  with left $\ca$-module category structure 
$\triangleright: \ca\times \Mo\to \Mo$,
 and right $\Do$-module category  structure $ \triangleleft: \Mo\times \Do \to \Mo$,
equipped with natural
isomorphisms $$\{\delta_{X,M,Y}:(X\triangleright M) \triangleleft Y\to X\triangleright  (M\triangleleft Y), X\in\ca, Y\in\Do, M\in \Mo\}$$ satisfying 
certain axioms. For details the reader is referred to \cite{Gr}, \cite{Gr2}. 

Assume that $\Mo$ is a $\ca$-bimodule category, with right and left  associativities given by

$$m^r_{M, X,Y}: M\triangleleft (X\ot Y)\to (M\triangleleft X) \triangleleft Y,$$
$$m^l_{X,Y,M}: (X\otimes Y)\triangleright   M \to X\triangleright  
(Y\triangleright M).$$

The \textit{ relative center} \cite{GNN} of $\Mo$ is the category $Z_\ca(\Mo)$ that consists of pairs $(M, \gamma)$, where $M\in \Mo$ and $\gamma_X: M \triangleleft X \to X \triangleright M$ is a family of natural isomorphism, called the \textit{half-braiding}, such that

\begin{equation}\label{relativecent1} m^l_{X,Y,M} \gamma_{X\ot Y}=  (\id_X \triangleright \gamma_Y) \delta_{X,M,Y} (\gamma_X\triangleleft \id_Y) m^r_{M, X,Y},
\end{equation}

for any $X, Y\in \ca$. If $(M, \gamma)$, $(M', \gamma')$ are two elements in $Z_\ca(\Mo)$, a morphism between them is a morphism $f:M\to M'$ such that

\begin{equation}\label{relativecent2} (\id_X \triangleright f) \gamma_X= \gamma'_X (f \triangleleft \id_X),
\end{equation}
for any $X\in \ca$.
The next result is \cite[Corollary B.5]{FSS1}.
\begin{prop}\label{adjoint-forgetf} The forgetful functor $\uc: Z_\ca(\Mo)\to \Mo,$ $\uc(M, \gamma)=M$, for any $(M, \gamma)\in Z_\ca(\Mo)$ is exact, and has left adjoint $I:\Mo\to Z_\ca(\Mo)$ given by
$$I(M)=\int^{X\in \ca} X\triangleright M \triangleleft {}^*X, $$
for any $M\in \Mo$.\qed
\end{prop}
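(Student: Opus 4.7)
The plan splits into three tasks: exactness of $\uc$, construction of $I(M)$ together with its half-braiding, and the adjunction itself.

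\textbf{Exactness.} I would show that $\uc$ creates kernels and cokernels. Given $f:(M,\gamma)\to (M',\gamma')$ in $Z_\ca(\Mo)$, form $K=\Ker f$ in $\Mo$. Since $X\triangleright(-)$ and $(-)\triangleleft X$ are exact in each variable, they preserve kernels, so compatibility of $f$ with the half-braidings determines a unique $\gamma^K_X:K\triangleleft X\to X\triangleright K$; its invertibility and the hexagon \eqref{relativecent1} are inherited from $\gamma,\gamma'$ by the universal property of kernels. The cokernel case is dual, and exactness of $\uc$ follows.

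\textbf{The functor $I$ and its half-braiding.} Write $\sigma^M_X:X\triangleright M\triangleleft {}^*X\to I(M)$ for the dinatural structure maps of the coend. I would define
$$\gamma^{I(M)}_Y:I(M)\triangleleft Y\to Y\triangleright I(M)$$
by giving its composite with each $\sigma^M_X\triangleleft\id_Y$: reindex the coend using the rigidity identification ${}^*X\ot Y\simeq {}^*(Y^*\ot X)$ to rewrite the source as $X\triangleright M\triangleleft{}^*(Y^*\ot X)$, tensor on the left with $\coev_Y:\uno\to Y\ot Y^*$, apply the bimodule coherence $\delta$ and the associativities of $\triangleright,\triangleleft$ to land in $Y\triangleright((Y^*\ot X)\triangleright M\triangleleft{}^*(Y^*\ot X))$, then postcompose with $\id_Y\triangleright \sigma^M_{Y^*\ot X}$. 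Dinaturality in $X$ is routine, so the universal property of the coend produces $\gamma^{I(M)}_Y$; a symmetric construction with $\ev_Y$ yields its inverse, and \eqref{relativecent1} follows from the coherence of rigidity together with the dinaturality of $\sigma^M$.

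\textbf{The adjunction.} For $f:M\to N$ in $\Mo$ and $(N,\gamma)\in Z_\ca(\Mo)$ I would set
$$\widetilde{f}_X : X\triangleright M\triangleleft{}^*X\xrightarrow{\id_X\triangleright f\triangleleft \id_{{}^*X}} X\triangleright N\triangleleft{}^*X\xrightarrow{\gamma^{-1}_X\triangleleft \id_{{}^*X}} N\triangleleft X\triangleleft{}^*X\xrightarrow{\id_N\triangleleft \ev_X} N,$$
which is dinatural in $X$ by naturality of $\gamma$ and the duality equations, inducing $\widetilde{f}:I(M)\to N$ in $\Mo$; a direct check shows $\widetilde{f}$ respects half-braidings, hence lies in $Z_\ca(\Mo)$. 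Conversely, restrict $g:I(M)\to N$ along $\sigma^M_\uno$ (under $\uno\triangleright M\triangleleft{}^*\uno\simeq M$) to obtain $\check g:M\to N$. The identity $\check{\widetilde f}=f$ is immediate at $X=\uno$; the reverse identity $\widetilde{\check g}=g$ follows by comparing $g\circ\sigma^M_X$ with $\widetilde{\check g}_X$, using that $g$ commutes with $\gamma^{I(M)}$ and $\gamma$, which forces the two dinatural families to coincide.

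The main obstacle is the half-braiding step: ensuring $\gamma^{I(M)}_Y$ is well-defined, invertible, and satisfies \eqref{relativecent1}. All three reduce to coherence computations intertwining rigidity in $\ca$, the bimodule constraint $\delta$, and dinaturality of $\sigma^M$; they can be organized cleanly via Yoneda, using that $Y\triangleright(-)$ and $(-)\triangleleft Y$ commute with coends because each admits both adjoints, since $\ca$ is rigid.
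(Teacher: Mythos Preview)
The paper does not supply its own proof of this proposition: it is stated with a terminal \qed and attributed to \cite[Corollary B.5]{FSS1}. So there is nothing to compare against beyond the bare citation.

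Your sketch is a correct and standard direct argument. A few small remarks. In the exactness step you only really need that $\uc$ reflects and preserves exactness, and your creation-of-(co)kernels argument gives exactly that. In the adjunction step, the map $N\triangleleft X\triangleleft{}^*X\to N$ should use the evaluation for the \emph{left} dual, i.e.\ the pairing $X\otimes{}^*X\to\uno$; calling this $\ev_X$ is slightly abusive since the paper reserves $\ev_X$ elsewhere for the right-dual pairing, so be explicit about which duality you mean. Finally, the point you flag as the main obstacle --- that $Y\triangleright(-)$ and $(-)\triangleleft Y$ commute with the coend defining $I(M)$ --- is indeed what makes $\gamma^{I(M)}_Y$ well-defined, and your justification via existence of both adjoints (rigidity) is the right one; this is also what underlies the proof in \cite{FSS1}.
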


The relative center can be thought of as a 2-functor
$$\Zc_\ca: {}_\ca\Bimo \to Ab_\ku$$
Here ${}_\ca\Bimo$ is the 2-category of $\ca$-bimodule categories, and $Ab_\ku$ is the 2-category of finite $\ku$-linear abelian categories. If $\Mo, \No$ are $\ca$-bimodule categories, and $(F,c,d):\Mo\to \No$ is a bimodule functor, then $$\Zc_\ca(F):\Zc_\ca(\Mo)\to \Zc_\ca(\No)$$ is the functor given as follows. If $(M,\gamma)\in\Zc_\ca(\Mo),$ then $\Zc_\ca(F)(M,\gamma)=(F(M), \widetilde\gamma)$, where $\widetilde\gamma_X: F(M)\triangleleft X\to X\triangleright F(M)$ is defined as
\begin{equation}\label{relative-bimod-funct} 
\widetilde\gamma_X=c_{X,M} F(\gamma_X)d^{-1}_{M,X},
\end{equation}
for any $X\in \ca$. The next result seem to be well-known.
\begin{prop}\label{about-Z} Let $\Mo, \No$ be $\ca$-bimodule categories, and $(F,c,d):\Mo\to \No$ be a bimodule functor. The following statements hold.
\begin{itemize}
\item[1.] If $G:\No\to \Mo$ is a bimodule functor, right adjoint to $F$, then $\Zc_\ca(G)$ is right adjoint to $\Zc_\ca(F)$.

\item[2.] If $F$ is exact (faithful) then $\Zc_\ca(F)$ is exact (respect.  faithful).\qed

\end{itemize}

\end{prop}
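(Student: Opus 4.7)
The plan for part (1) is to lift the given adjunction directly via its unit and counit. The key preliminary fact is that the unit $\eta:\Id_\Mo\to GF$ and counit $\epsilon:FG\to \Id_\No$ of a bimodule adjunction are bimodule natural transformations, that is, they are compatible with both the left and right $\ca$-module structures of $F$ and $G$. This is the bimodule analogue of the one-sided result recalled in the unnamed lemma preceding equation \eqref{adjointp-mod-funct}, applied once on each side, and equivalently it characterizes the canonical bimodule structure induced on the adjoint functor. Once this is established, I would verify by a direct diagram chase with the definition \eqref{relative-bimod-funct} that for any bimodule natural transformation $\theta:H\to K$ between bimodule functors $\Mo\to \No$, each component $\theta_M$ defines a morphism $\Zc_\ca(H)(M,\gamma)\to \Zc_\ca(K)(M,\gamma)$ in $Z_\ca(\No)$. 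Applying this to $\eta$ and $\epsilon$ yields natural transformations $\Zc_\ca(\eta)$ and $\Zc_\ca(\epsilon)$, and the two triangle identities for $(\Zc_\ca(F),\Zc_\ca(G))$ are inherited from those of $(F,G)$ because they can be tested after applying the faithful forgetful functors $\uc_\Mo,\uc_\No$.

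For part (2), both assertions reduce to the strict equality $\uc_\No\circ \Zc_\ca(F)=F\circ \uc_\Mo$, which is immediate from the definition of $\Zc_\ca(F)$ on objects and morphisms. By Proposition \ref{adjoint-forgetf} the forgetful functors $\uc_\Mo$ and $\uc_\No$ are exact, and they are faithful by the very definition of morphisms in the relative center. If $F$ is exact then $F\circ \uc_\Mo$ is exact, hence so is $\uc_\No\circ \Zc_\ca(F)$; since an exact and faithful functor reflects exact sequences, this forces $\Zc_\ca(F)$ itself to be exact. For faithfulness, if $\Zc_\ca(F)(f)=0$ for a morphism $f$ in $Z_\ca(\Mo)$, then $F(\uc_\Mo(f))=0$, so $\uc_\Mo(f)=0$ by faithfulness of $F$, and finally $f=0$ by faithfulness of $\uc_\Mo$.

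The only non-routine step is the preliminary verification that the adjunction $(F,G)$ is genuinely compatible with the bimodule structures, making $\eta$ and $\epsilon$ into bimodule natural transformations. Once this is granted, every remaining check is a short calculation with the axioms \eqref{relativecent1}, \eqref{relativecent2} and the induced half-braiding formula \eqref{relative-bimod-funct}. I would isolate this compatibility as a short preliminary lemma, proved by applying the one-sided adjoint module structure result on each side separately and then verifying that the two induced structures on $G$ are compatible with the middle associator $\delta$; this last check is pure bookkeeping, after which Proposition \ref{about-Z} reduces to the outline sketched above.
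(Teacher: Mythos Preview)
The paper does not actually supply a proof of this proposition: it is stated as ``well-known'' and closed with a \qed\ right after the statement, so there is nothing to compare against line by line. Your sketch is correct and is the standard argument one would give. Part (1) amounts to the fact that $\Zc_\ca$ is a $2$-functor from $\ca$-bimodule categories to abelian categories and hence preserves adjunctions; you unpack this concretely by lifting the unit and counit through the formula \eqref{relative-bimod-funct}, which is exactly how one verifies the $2$-functoriality. Part (2) is the usual reflection argument via the exact faithful forgetful functor: the identity $\uc_\No\circ \Zc_\ca(F)=F\circ \uc_\Mo$ together with Proposition \ref{adjoint-forgetf} (exactness of $\uc$) and the evident faithfulness of $\uc$ gives both claims immediately. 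The preliminary point you isolate, that the unit and counit of the adjunction are bimodule natural transformations for the induced bimodule structure on $G$, is indeed the only thing requiring any check, and it follows as you say from applying the one-sided result twice.
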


\section{ Adjoint algebras associated to module categories  }\label{Section:adjoints}

 Let $\ca$ be a  tensor category, and let $\Mo$ be an exact indecomposable left $\ca$-module category. In this Section, we shall recall the definition of the adjoint algebra associated to $\Mo$, as introduced by K. Shimizu \cite{Sh2}. \medbreak

The \textit{adjoint algebra} of the module category $\Mo$ was defined as follows. If
 $$\rho_\Mo:\ca\to \Rex(\Mo),$$ $$\rho_\Mo(X)(M)=X\triangleright M,  X\in \ca, M\in \Mo,$$ is the action functor, since it is a $\ca$-bimodule functor, then its right adjoint is also a $\ca$-bimodule action. Whence we can consider the functor
 $$Z_\ca(\rho_\Mo^{\ra}): \ca^*_\Mo=Z_\ca(\Rex(\Mo))\to Z(\ca). $$
 Then $\cha_\Mo:=Z_\ca(\rho_\Mo^{\ra})(\Id_\Mo)\in Z(\ca)$. The \textit{adjoint algebra of the tensor category }$\ca$ is the algebra $\cha_\ca$ of the regular module category $\ca$.

In \cite{Sh1}, \cite{Sh2} the author describes the right adjoint $\rho_\Mo^{\ra}$ using the end of the internal hom; that is, it was proven that 
$$\rho_\Mo^{\ra}:  \Rex(\Mo)\to\ca,$$ 
$$ \rho_\Mo^{\ra}(F)= \int_{M\in \Mo} \uhom(M, F(M)),$$
for any $F\in \Rex(\Mo)$. This description has several advantages, one of them is that it is possible to describe the algebra structure and the half braiding of $\cha_\Mo$ in terms of the dinatural transformations of the end.

Assume that $\pi^\Mo_M:\cha_\Mo\xrightarrow{ .. } \uhom(M, M)$ are the corresponding dinatural transformations. The half braiding of $\cha_\Mo$ is the unique isomorphism $\sigma^\Mo_X: \cha_\Mo\ot X\to X\ot \cha_\Mo$ such that the diagram
 \begin{equation}\label{half-braidig-ch}
    \xymatrix@C=90pt@R=16pt{
      \cha_\Mo \otimes X
      \ar[r]^-{\pi^\Mo_{X \triangleright M} \otimes \id_X}
      \ar[dd]_{\sigma^{\Mo}_X}
      & \uhom(X \triangleright M,  X \triangleright  M) \otimes X
      \ar[d]^{\mathfrak{b}_{X, M, X \triangleright  M}} \\
      & \uhom(M,  X \triangleright  M)
      \ar[d]^{ \mathfrak{a}_{X,M,M}} \\
      X \otimes \cha_\Mo
      \ar[r]^-{\id_X \otimes \pi^\Mo_M}
      & X \ot \uhom(M, M)
    }
  \end{equation}
is commutative. Recall from Section \ref{subsection:internal hom} the definition of the morphisms $\mathfrak{b}, \mathfrak{a}$.
It was explained in \cite[Subection 4.2]{Sh2} that the algebra structure of $\cha_\Mo$ is given as follows.  The product and the unit of $\cha_\Mo$ are 
$$m_\Mo:\cha_\Mo\ot \cha_\Mo\to \cha_\Mo, \quad u_\Mo:\uno\to \cha_\Mo,$$
defined to be the unique morphisms such that they satisfy
\begin{equation}\label{product-cha-M}
\begin{split}\pi^\Mo_M\circ m_\Mo= \comp^\Mo_{M}\circ (\pi^\Mo_M\ot \pi^\Mo_M),\\ \pi^\Mo_M\circ  u_\Mo= \coev^\Mo_{ M},\end{split}
\end{equation}

for any $M\in \Mo$. For the definition of $\coev^\Mo$ and $ \comp^\Mo$ see Section \ref{subsection:internal hom}.

\section{ (Co)ends for module categories}

In this Section we recall the definition of \textit{relative} (co)ends. This is a useful tool in the theory of representations of tensor categories, generalizing the usual (co)ends, that we developed in \cite{BM}.

Let $\ca$ be a  tensor category and $\Mo$ be a left $\ca$-module category. Let $m$ be the associativity constraint of $\Mo$.  Assume that $\Ac$ is a category together with a functor $S:\Mo^{\op}\times \Mo\to \Ac$, equipped with natural isomorphisms
\begin{equation*} \beta^X_{M,N}: S(M,X\triangleright N)\to S(X^*\triangleright M,N),
\end{equation*}
for any $X\in \ca, M,N\in \Mo$. Isomorphisms $\beta$ are called a \textit{prebalancing} of the functor $S$. Sometimes we shall refer to $\beta$ as a $\ca$-prebalancing, to emphasize the fact that $\beta$ is related with the action of $\ca$.

\begin{defi} The \textit{relative end} of the pair $(S,\beta)$ is an object $E\in \Ac$ equipped with dinatural transformations $\pi_M: E\xrightarrow{ . .} S(M,M)$ such that 
\begin{equation}\label{dinat-module} S(\ev_X\triangleright \id_M,  \id_M) \pi_M= S(m_{X^*,X,M},  \id_M) \beta^X_{X\triangleright M,M} \pi_{X\triangleright M},
\end{equation}
for any $X\in \ca, M\in \Mo$, and it is universal among those objects with this property. That is, if $E'\in \Ac$ is another object, with dinatural transformations $\pi'_M:\widetilde{E}\xrightarrow{ . .} S(M,M)$, such that they verify \eqref{dinat-module}, then there exists a unique morphism $h:\widetilde{E}\to E$ such that $\pi'_M= \pi_M\circ h $. The \textit{relative coend} is defined similarly. We refer to \cite{BM} for its definition. 
\end{defi}
We will denote the relative end  as $\oint_{M\in \Mo} (S,\beta)$, or simply as $\oint_{M\in \Mo} S$, when the prebalancing $\beta$ is understood from the context.
In the next result we collect some facts about the relative end that will be needed later. The reader is referred to \cite[Prop. 3.3]{BM}, \cite[Prop. 4.2]{BM}.

\begin{teo}\label{compilat-relat-end} Assume  $\Mo, \No$ are  left $\ca$-module categories, and $S, \widetilde{S}:\Mo^{\op}\times \Mo\to \Ac$ are functors equipped with $\ca$-prebalancings $$\beta^X_{M,N}: S(M,X\triangleright N)\to S(X^*\triangleright M,N),$$ $$ \widetilde{\beta}^X_{M,N}: \widetilde{S}(M,X\triangleright N)\to \widetilde{S}(X^*\triangleright M,N),$$ $X\in \ca, M,N\in \Mo$. The following assertions holds
\begin{itemize}

\item[(i)] Assume that the module ends $\oint_{M\in \Mo} (S,\beta), \oint_{M\in \Mo} (\widetilde{S},\widetilde{\beta})$ exist and have dinatural transformations $\pi, \widetilde{\pi}$, respectively. If  $\gamma:S\to \widetilde{S}$ is a natural transformation  such that 
\begin{equation}\label{gamma1}
 \widetilde{\beta}^X_{M,N} \gamma_{(M,X\triangleright N)}=\gamma_{(X^*\triangleright M,N)} \beta^X_{M,N},
\end{equation}
then there exists a unique map $\widehat{\gamma}: \oint_{M\in \Mo} (S,\beta)\to \oint_{M\in \Mo} (\widetilde{S},\widetilde{\beta})$ such that   $\widetilde{\pi}_M \widehat{\gamma}= \gamma_{(M,M)} \pi_M$ for any $M\in \Mo$. If $\gamma$ is a natural isomorphism, then  $\widehat{\gamma}$ is an isomorphism.
\item[(ii)] If the end  $\oint_{M\in \Mo} (S,\beta)$ exists, then for any object $U\in \Ac$,  the end $\oint_{M\in \Mo} \Hom_\Ac(U, S( -, -))$ exists, and there is an isomorphism
$$\oint_{M\in \Mo} \Hom_\Ac(U, S( -, -))\simeq  \Hom_\Ac(U, \oint_{M\in \Mo} (S,\beta) ).$$
Moreover, if $\oint_{M\in \Mo} \Hom_\Ac(U, S( -, -))$ exists for any $U\in \Ac$, then the end $\oint_{M\in \Mo} (S,\beta)$ exists.

\item[(iii)] For any pair of $\ca$-module functors $(F, c), (G, d):\Mo\to \No,$ the functor $$\Hom_{\No}(F(-), G(-)): \Mo^{\op}\times \Mo\to \vect_\ku$$
has a canonical $\ca$-pre-balancing  given by
\begin{equation}\label{beta-for-homs}
 \beta^X_{M,N}: \Hom_{\No}(F(M), G(X\triangleright N))\to \Hom_{\No}(F(X^*\triangleright M), G(N))
\end{equation}
$$ \beta^X_{M,N}(\alpha)= (ev_X\triangleright \id_{G(N)}) m^{-1}_{X^*,X,G(N)} (\id_{X^*}\triangleright d_{X,N}\alpha)c_{X^*,M},$$
for any $X\in \ca, M, N\in \Mo$.
There is an isomorphism $$ \Nat_{\!m}(F,G)\simeq \oint_{M\in \Mo} (\Hom_{\No}(F(-), G(-)), \beta).$$\qed
\end{itemize}
\end{teo}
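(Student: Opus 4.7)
For part (i), I would invoke the universal property of $\oint_{M\in\Mo}(\widetilde{S},\widetilde{\beta})$ directly. Setting $\sigma_M := \gamma_{(M,M)} \circ \pi_M$, dinaturality of $\sigma$ in $M$ is immediate from dinaturality of $\pi$ and naturality of $\gamma$. The balancing equation \eqref{dinat-module} for $(\widetilde{S},\widetilde{\beta})$ applied to $\sigma$ then follows from the corresponding equation for $(S,\beta)$ applied to $\pi$, combined with the compatibility hypothesis \eqref{gamma1}: indeed, rewriting both sides using \eqref{gamma1} moves $\gamma$ across the prebalancings and reduces the identity to dinaturality of $\pi$. Universality produces the unique $\widehat{\gamma}$. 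If $\gamma$ is invertible, running the construction with $\gamma^{-1}$ gives a morphism in the opposite direction, and the uniqueness clause forces both composites to be identities.

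For part (ii), the first claim is a Yoneda-style argument. Any morphism $h:U\to \oint_{M\in\Mo}(S,\beta)$ produces a family $(\pi_M \circ h)_M$ in $\Hom_\Ac(U,S(-,-))$ which is dinatural and satisfies the balancing condition obtained by applying $\Hom_\Ac(U,-)$ to \eqref{dinat-module}. Conversely, any such family factors uniquely through the universal $\pi$, and this correspondence is natural in $U$. For the \emph{moreover} clause, I would observe that the assignment $U \mapsto \oint_{M\in\Mo}\Hom_\Ac(U,S(-,-))$ is contravariant and left exact, since the relative end, being a limit, commutes with $\Hom_\Ac(U,-)$. In the finite $\ku$-linear abelian setting such a functor is representable, and the representing object satisfies the universal property defining $\oint(S,\beta)$.

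For part (iii), the first task is to check that \eqref{beta-for-homs} is natural in $M$ and $N$, which is a direct diagram chase using naturality of $c$, $d$, $m$, and of $\ev_X$. The key step is then to identify the balancing condition for this $\beta$ with the module-naturality axiom \eqref{modfunctor3}. Combining with part (ii) applied to $U=\ku$ (using Lemma \ref{compati-actions} to reduce $\oint\Hom_\No(F(-),G(-))$ to a subspace of families), an element of the relative end corresponds to a family $(\theta_M)_M$ that is ordinarily dinatural and in addition satisfies the balancing condition for $\beta$. Ordinary dinaturality identifies such a family with a natural transformation $\theta:F\to G$, so the remaining task is to show that the balancing condition, after substituting \eqref{beta-for-homs} and evaluating on $\id$, reproduces exactly \eqref{modfunctor3}. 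The main obstacle is precisely this last translation: the formula for $\beta$ involves the evaluation $\ev_X$ and the inverse associator $m^{-1}_{X^*,X,G(N)}$ attached to the dual $X^*$, so one must use the rigidity of $\ca$ (the snake identity relating $\ev_X$ and $\coev_X$) together with the unit and associativity axioms for the module structures of $F$ and $G$ in order to strip off the $X^*\triangleright$ factor and rewrite the equation in the standard form \eqref{modfunctor3}. Once this identification is made in one direction, the converse is automatic since $\beta$ is an isomorphism.
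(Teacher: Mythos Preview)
The paper does not give its own proof of this theorem: it is stated with a \qed\ and attributed to \cite[Prop.~3.3, Prop.~4.2]{BM}. Your proposal follows the natural line of argument---universal property plus compatibility for (i), Yoneda and representability for (ii), and identifying the balancing condition with the module-naturality axiom \eqref{modfunctor3} via rigidity for (iii)---which is almost certainly what appears in the cited reference. In short, your approach is correct and there is nothing in the present paper to compare it against.
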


\begin{rmk}\label{prebalancing-for-hom(u)} If the functor $S$ has $\ca$-prebalancing given by $\beta$, then, according to \cite{BM},  
 for any $U\in \Ac$, the prebalancing for the functor $\Hom_\Ac(U, S( -,-)) $ is defined as
$$\beta^U_{X,M,N}: \Hom_\Ac(U, S( M, X\triangleright N))\to \Hom_\Ac(U, S( X^*\triangleright M, N)),$$
$$\beta^U_{X,M,N}(f)=\beta^X_{M,N}\circ f. $$
\end{rmk} 
\begin{rmk}\label{prebalancing-for-natm} According to \cite[Prop. 4.2]{BM}, the dinatural transformations associated with the end given in Theorem \ref{compilat-relat-end} (iii) are $$\pi_M: \Nat_{\!m}(F,G)\to \Hom_{\No}(F(M), G(M)),\, \pi_M(\alpha)=\alpha_M.$$
\end{rmk} 

Let $\Do \subseteq \ca$ be a tensor subcategory of $\ca$. If $\Mo$ is a left $\ca$-module category, we can consider the restricted $\Do$-module category $\Res^\ca_\Do \Mo$.  We can consider both relative ends 
$$\pi_M:  \oint_{M\in \Mo} (S,\beta)\to S(M,M),$$
$$\lambda_M:  \oint_{M\in  \Res^\ca_\Do\Mo} (S,\beta)\to S(M,M).$$
The next result is a straightforward consequence of the definition of the relative ends.
 \begin{prop}\cite[Prop. 3.12]{BM}\label{restriction-sub}
  Let $S:\Mo^{\op}\times \Mo\to \Ac$ be a functor equipped with a prebalancing $\beta^X_{M,N}: S(M,X\triangleright N)\to S(X^*\triangleright M,N)$. 
 There exists a monomorphism in $\Ac$
$$\iota: \oint_{M\in \Mo} (S,\beta) \hookrightarrow\oint_{M\in \Res^\ca_\Do \Mo} (S,\beta),$$
such that $ \lambda_M \circ \iota= \pi_M$.
 \qed
 \end{prop}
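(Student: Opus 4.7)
The plan is to construct $\iota$ via the universal property of the $\Do$-relative end on the right, and then establish monicity using the fact that the dinatural transformations $\pi_M$ of the $\ca$-relative end are jointly monic.

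First, I would observe that the $\Do$-prebalancing making $\oint_{M\in \Res^\ca_\Do \Mo} (S,\beta)$ into a relative end is just the restriction of $\beta$ along the inclusion $\Do\subseteq \ca$: for $X\in\Do$, the isomorphism $\beta^X_{M,N}: S(M,X\triangleright N)\to S(X^*\triangleright M,N)$ is already part of the original data defining the $\ca$-prebalancing. Consequently, the dinatural transformations $\pi_M:\oint_{M\in \Mo} (S,\beta)\xrightarrow{..} S(M,M)$, which satisfy equation \eqref{dinat-module} for every $X\in\ca$, satisfy it in particular for every $X\in\Do$. The universal property of $\oint_{M\in \Res^\ca_\Do \Mo} (S,\beta)$ then produces a unique morphism $\iota$ in $\Ac$ with $\lambda_M\circ\iota=\pi_M$ for every $M\in\Mo$.

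Next I would verify that the family $\{\pi_M\}_{M\in\Mo}$ is jointly monic. If $f,g:U\to \oint_{M\in \Mo} (S,\beta)$ satisfy $\pi_M f=\pi_M g$ for every $M$, then the common composites $\pi_M f=\pi_M g$ assemble into a dinatural transformation from $U$ to $S(-,-)$ satisfying \eqref{dinat-module}, so the uniqueness half of the universal property of the $\ca$-end forces $f=g$. With this in hand, suppose $\iota f=\iota g$ for some $f,g:U\to \oint_{M\in \Mo} (S,\beta)$; composing with $\lambda_M$ gives $\pi_M f=\lambda_M\iota f=\lambda_M\iota g=\pi_M g$ for every $M$, whence $f=g$ by joint monicity, and $\iota$ is a monomorphism.

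The argument is essentially a direct manipulation of universal properties, so I do not foresee a serious obstacle. The only point that warrants some care is checking that no subtlety is lost when passing between the $\ca$- and $\Do$-prebalancings; since $\Do$ is a full tensor subcategory of $\ca$ and $\beta$ is a natural family indexed by all $X\in\ca$, its restriction to $X\in\Do$ is exactly the prebalancing with respect to which $\oint_{M\in \Res^\ca_\Do \Mo} (S,\beta)$ is formed, so the two universal properties refer to compatible data and the comparison map $\iota$ is well defined.
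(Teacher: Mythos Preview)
Your proposal is correct and matches the paper's treatment: the paper does not provide an explicit argument, simply noting that the result ``is a straightforward consequence of the definition of the relative ends'' and citing \cite[Prop.~3.12]{BM}, which is precisely the universal-property manipulation you outline. One minor remark: you invoke that $\Do$ is a \emph{full} tensor subcategory, but the argument only needs that objects of $\Do$ are objects of $\ca$ so that the prebalancing restricts, which holds for any tensor subcategory.
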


The next result says that, the relative end coincides with the usual end in the case $\ca=\vect_\ku$.  Recall from Section \ref{Section:vect-action} that, any category has a canonical module structure over $\vect_\ku$ that we denote by $\blacktriangleright$.

\begin{prop}\label{add-g}\cite[Prop. 3.14]{BM}\label{relative=usual} Let $\Mo, \Bc$ be categories together with a functor $S:\Mo^{\op}\times \Mo\to \Bc$. The assertions following holds.
\begin{itemize}
\item[(i)] The functor $S$ posses a canonical $\vect_\ku$-prebalancing\begin{equation*} \gamma^V_{M,N}: S(M,X\blacktriangleright N)\to S(X^*\blacktriangleright M,N),
\end{equation*}

\item[(ii)] Any dinatural transformation $\pi_M:E\to S(M,M)$ satisfies equation \eqref{dinat-module} for the canonical action of $\vect_\ku$ on $\Mo$. That is
\begin{equation} S(\ev_V\blacktriangleright \id_M,  \id_M) \pi_M= S(m_{V^*,V,M},  \id_M) \gamma^V_{V\blacktriangleright M,M} \pi_{V\blacktriangleright M},
\end{equation}
for any $V\in\vect_\ku $, $M\in\Mo$.
\end{itemize}\qed
\end{prop}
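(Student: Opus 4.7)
The plan is to exploit two facts: the canonical action of $\vect_\ku$ on any $\ku$-linear abelian category sends a finite-dimensional $V$ to the direct sum $V\blacktriangleright M\simeq M^{\dim V}$; and every additive $\ku$-linear functor is automatically a $\vect_\ku$-module functor, as recorded in Section \ref{Section:vect-action}. In particular, the prebalancing in (i) will be built from canonical isomorphisms rather than supplied as extra data, and the identity in (ii) will become a bookkeeping check at the level of the direct-sum decomposition.

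For part (i), since $S$ is $\ku$-linear and hence a $\vect_\ku$-module functor in each variable separately, there are canonical natural isomorphisms
$$\theta^V_{M,N}\colon S(M,V\blacktriangleright N)\xrightarrow{\;\sim\;}V\blacktriangleright S(M,N),$$
coming from covariance in the second argument, and
$$\tau^V_{M,N}\colon S(V^*\blacktriangleright M,N)\xrightarrow{\;\sim\;}V\blacktriangleright S(M,N),$$
coming from contravariance in the first argument (a contravariant $\ku$-linear functor carries $V^*\blacktriangleright(-)$ to $V^{**}\blacktriangleright(-)\simeq V\blacktriangleright(-)$ via the canonical identification $V^{**}\simeq V$ for finite-dimensional $V$). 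I would set $\gamma^V_{M,N}:=(\tau^V_{M,N})^{-1}\circ\theta^V_{M,N}$. Naturality in $M,N,V$ and compatibility with tensor products of vector spaces follow from the corresponding properties of $\theta$ and $\tau$.

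For part (ii), fix a basis $\{e_i\}_{i=1}^n$ of $V$ with dual basis $\{e_i^*\}$. Then $V\blacktriangleright M$ splits as a direct sum $\bigoplus_i M$, with inclusions $\iota_i\colon M\to V\blacktriangleright M$ induced by $e_i\colon\ku\to V$ and projections $p_i\colon V\blacktriangleright M\to M$ induced by $e_i^*\colon V\to\ku$, satisfying $p_i\iota_j=\delta_{ij}\id_M$ and $\sum_i\iota_ip_i=\id_{V\blacktriangleright M}$. Dinaturality of $\pi$ applied to each $\iota_i$ and $p_i$ yields identities of the form
$$S(p_i,\id_M)\,\pi_M=S(\id_{V\blacktriangleright M},p_i)\,\pi_{V\blacktriangleright M}, \quad S(\iota_i,\id_{V\blacktriangleright M})\,\pi_{V\blacktriangleright M}=S(\id_M,\iota_i)\,\pi_M,$$
which, combined with the splitting relations and with $\ku$-linearity of $S$, determine $\pi_{V\blacktriangleright M}$ completely from $\pi_M$. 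Substituting these expressions together with the basis decompositions of $\ev_V$, of the associator $m_{V^*,V,M}$, and of the prebalancing $\gamma^V_{V\blacktriangleright M,M}=\tau^{-1}\theta$ into both sides of \eqref{dinat-module}, both sides collapse to the same combination of $\pi_M$ with canonical identity maps, proving the equation.

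The main obstacle is the coherence bookkeeping in (ii): one must carefully unfold $\gamma^V$, $\ev_V$, and $m_{V^*,V,M}$ in the chosen basis and verify that after all canonical identifications (implicit in writing $\ku\blacktriangleright M=M$, $V^{**}=V$, and the decomposition of $(V^*\otimes V)\blacktriangleright M$) the two expressions coincide. The cleanest route is to reduce to $V=\ku^n$ by naturality of every map in $V$, where each structural morphism becomes an explicit matrix of identities and the identity can be verified entrywise.
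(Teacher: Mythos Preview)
The paper does not prove this proposition at all: it is stated with a terminal \qed and attributed to \cite[Prop.~3.14]{BM}, so there is no argument in the paper to compare against. Your proposal therefore supplies a proof where the paper simply cites one.

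Your outline is sound. For (i), building $\gamma^V_{M,N}$ as the composite $(\tau^V_{M,N})^{-1}\theta^V_{M,N}$ via the canonical $\vect_\ku$-module structure on additive $\ku$-linear functors is exactly the expected construction; note that the passage through $V^{**}\simeq V$ is unnecessary, since $\dim V^*=\dim V$ already gives $S(V^*\blacktriangleright M,N)\simeq S(M,N)^{\dim V}\simeq V\blacktriangleright S(M,N)$ directly. For (ii), the reduction to $V=\ku^n$ by naturality and the use of the biproduct splitting $\iota_i,p_i$ together with dinaturality is the right strategy; your two displayed dinaturality identities are correctly stated. The only caveat is that the phrase ``both sides collapse to the same combination'' hides all the content: you should actually write out $S(\ev_V\blacktriangleright\id_M,\id_M)$ and $S(m_{V^*,V,M},\id_M)\gamma^V_{V\blacktriangleright M,M}$ in terms of the $\iota_i,p_i$ and check the equality componentwise. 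This is routine once $V=\ku^n$, but it is the entire verification and should not be left implicit.
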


\section{Relative adjoint algebras}

For the rest of this Section $\ca$ will be a  tensor category and $\Mo$ be an indecomposable exact left $\ca$-module category, with structure functor given by $\rho:\ca\to \End(\Mo)$. 

For each tensor subcategory $\Do \subseteq \ca^*_\Mo$ we shall construct a connected \'etale algebra $\Ac_{\Do, \Mo}$ in the center $Z(\ca)$. When $\Do=\vect_\ku$ is the tensor subcategory of $\ca^*_\Mo$ generated by the unit object, the algebra $\Ac_{\vect_\ku, \Mo}$ coincides with Shimizu's adjoint algebra $\Ac_\Mo$.

\subsection{Computing  adjoint of the action functor}

Take $\Do \subseteq \ca^*_\Mo$ a tensor subcategory.

We shall denote by $\End(\Mo)$ the category of right exact functors $F:\Mo\to \Mo$. Then, the category $\End(\Mo)$ has an obvious structure of $\Do$-bimodule category. Hence, we can consider the relative center $Z_\Do(\End(\Mo))$. Let us explicitly describe objects in this category. An object in $Z_\Do(\End(\Mo))$ is a pair $(F, \gamma)$, where $F:\Mo\to \Mo$ is a right exact functor, $\gamma_{(D,d)}: F\circ D\to D\circ F$ is a family of natural isomorphisms, for any $(D,d)\in \Do$, such that
\begin{equation}\label{relativ-funct} (\id_{(D,d)} \circ \gamma_{(D',d')})(\gamma_{(D,d)}\circ \id_{(D',d')})=\gamma_{(D,d)\circ (D',d')},
\end{equation}
for any $(D,d), (D',d')\in \Do$.   Since $\Mo$ is a left module category over $\ca^*_\Mo$, in particular, it is a $\Do$-module category. Turns out that $Z_\Do(\End(\Mo))=\Do^*_\Mo. $

\begin{lema}\label{c-bimodule-relative}  The following assertions hold.
\begin{itemize}
\item[1.] The category $Z_\Do(\End(\Mo))$ has structure of $\ca$-bimodule category. 

\item[2.] For any $X\in \ca$, the functor $\rho(X)$ belongs to $Z_\Do(\End(\Mo))$. Thus we can consider the functor $\rho:\ca\to Z_\Do(\End(\Mo))$.   Turns out that this functor is monoidal, exact and faithful. 
\end{itemize}

\end{lema}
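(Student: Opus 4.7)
The plan is to establish statement (2) first, since the half-braiding of $\rho(X)$ will be the key ingredient for defining the $\ca$-bimodule structure in statement (1). Each $D\in \Do\subseteq \ca^*_\Mo$ is a $\ca$-module functor, hence comes equipped with a natural isomorphism $d_{X,M}:D(X\triangleright M)\to X\triangleright D(M)$. I define the half-braiding on $\rho(X)$ by
$(\gamma^X_D)_M := d_{X,M}^{-1}: X\triangleright D(M)\to D(X\triangleright M),$
which gives a natural isomorphism $\gamma^X_D:\rho(X)\circ D\to D\circ \rho(X)$. To verify the relative-center axiom \eqref{relativ-funct}, one computes that the composite module functor $D\circ D'$ has module structure $e_{X,M}=d_{X,D'(M)}\,D(d'_{X,M})$ by \eqref{modfunctor-comp}, so the identity $\gamma^X_{D\circ D'} = (\id_D\circ \gamma^X_{D'})(\gamma^X_D\circ \id_{D'})$ follows by inverting this formula. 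Thus $\rho(X)\in Z_\Do(\End(\Mo))$ as claimed.

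Next I verify that $\rho:\ca\to Z_\Do(\End(\Mo))$ is monoidal, exact and faithful. The monoidal constraint $\rho(X)\circ \rho(Y)\simeq \rho(X\ot Y)$ is induced by the associativity $m$ of the $\ca$-action on $\Mo$, exactly as for the underlying functor $\ca\to\End(\Mo)$; that it intertwines the half-braidings $\gamma^X\ast\gamma^Y$ with $\gamma^{X\ot Y}$ is a direct consequence of the module-functor hexagon \eqref{modfunctor1} applied to each $D\in\Do$. Exactness is immediate since $X\mapsto X\triangleright M$ is exact for every $M\in\Mo$, and exactness of a functor into (the underlying category of) $Z_\Do(\End(\Mo))$ is tested pointwise via the exact forgetful functor. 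Faithfulness reduces through the forgetful functor to faithfulness of $\rho:\ca\to\End(\Mo)$, which is the standard fact that a finite tensor category acts faithfully on any exact indecomposable module category.

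For statement (1), the $\ca$-bimodule structure is built from the obvious composition-based bimodule structure on $\End(\Mo)$ plus the half-braidings from part (2). Concretely, for $X\in\ca$ and $(F,\gamma)\in Z_\Do(\End(\Mo))$, I set
$X\triangleright (F,\gamma) := \bigl(\rho(X)\circ F,\; \widetilde{\gamma}\bigr), \qquad (F,\gamma)\triangleleft X := \bigl(F\circ\rho(X),\; \overline{\gamma}\bigr),$
with
$\widetilde{\gamma}_D = (\gamma^X_D\ast F)\circ (\rho(X)\ast\gamma_D), \qquad \overline{\gamma}_D = (\gamma_D\ast \rho(X))\circ (F\ast d_{X,-}).$
Verifying that each of $\widetilde{\gamma}$, $\overline{\gamma}$ satisfies \eqref{relativ-funct}, and that the two actions satisfy associativity and mutual compatibility (the structural isomorphism $\delta$), reduces to interleaving the axiom \eqref{relativ-funct} for $\gamma$ with the module-functor axiom for $D$, together with the compatibility already proved in part (2). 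The unit constraints come from $\rho(\uno)\simeq \Id_\Mo$.

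The main obstacle is not conceptual but bookkeeping: the half-braiding $\widetilde{\gamma}_D$ and its right-hand counterpart combine three independent natural transformations (namely $\gamma_D$, $d_{X,-}$, and the module-associativity $m$), and verifying the relative-center axiom for them amounts to a moderately long diagram chase that must be carried out carefully so that every instance of a module-functor hexagon, a bimodule associator $\delta$, and a half-braiding axiom is applied in the correct order.
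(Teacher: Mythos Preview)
Your proof is correct and follows essentially the same approach as the paper: the formulas you give for the half-braidings $\widetilde{\gamma}_D$ and $\overline{\gamma}_D$ coincide with the paper's ${}^X\gamma$ and $\gamma^X$, and the faithfulness argument via the forgetful functor is identical. The only difference is organizational---you establish the half-braiding for $\rho(X)$ first and build the bimodule structure from it, while the paper defines the bimodule structure directly and then observes $\rho(X)=X\triangleright\Id$; note also that in your formula for $\overline{\gamma}_D$ the factor $F\ast d_{X,-}$ should be $F\ast d_{X,-}^{-1}$ for the composition to type-check (matching the paper's $F(d^{-1}_{X,M})$).
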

\pf  1. Take $X\in \ca$, $(F, \gamma)\in Z_\Do(\End(\Mo))$, and define
$$X\triangleright (F, \gamma)=(X\triangleright F, {}^X\gamma), \, (F, \gamma)\triangleleft X=(F\triangleleft X, \gamma^X),$$ where
$$X\triangleright F,\; F\triangleleft X :\Mo\to \Mo, $$
$$(X\triangleright F)(M)=X \triangleright F(M), \,  (F\triangleleft X)(M)=F(X\triangleright M), $$
for any $M\in \Mo$. For any $(D,d)\in \Do$ define the natural isomorphism
$${}^X\gamma_{(D,d)}: (X\triangleright F)\circ D\to D\circ (X\triangleright F),$$
$$\big({}^X\gamma_{(D,d)}\big)_M=d^{-1}_{X,F(M)} (\id_X \triangleright (\gamma_D)_M), $$
for any $M\in \Mo$. Also define, for any $(D,d)\in \Do$
$$\gamma^X_{(D,d)}: (X\triangleright F)\circ D\to D\circ  (X\triangleright F), $$
$$\big(\gamma^X_{(D,d)} \big)_M= (\gamma_{(D,d)})_{X\triangleright M} F(d^{-1}_{X,M}),  $$
for any $M\in \Mo$. 

2. It follows from item (1), since $\rho(X)=X\triangleright \Id$. Exactness of the functor $\rho$ follows from the exactness of the bifunctor $\triangleright$ in the first variable. There is a commutative diagram of functors
\begin{equation}\label{diagram11}
\xymatrix{&\ca 
\ar[dl]_{\rho}
\ar[dr]^{X\mapsto X \triangleright -}&\\
 Z_\Do(\End(\Mo)) \ar[rr]^{\uc}&&  \End(\Mo),}
\end{equation}
where $\uc: Z_\Do(\End(\Mo)) \to \End(\Mo)$ is the forgetful functor. Since the action functor $\ca\to \End(\Mo)$ is faithful, it follows that $\rho$ is faithful.

\epf

\begin{rmk}\label{left-action-functor} For future reference, for any $X\in \ca$, we denote the functor $L^X=\rho(X)\in \End(\Mo).$ For any $M\in \Mo$, $L^X(M)=X\triangleright M. $  It follows from part (2) of the above Lemma that $(L^X, l^X)\in  Z_\Do(\End(\Mo))$, where
$$l^X_D: L^X\circ D\to D\circ   L^X,$$
$$\big(l^X_D\big)_M=d^{-1}_{X,M}, $$
for any $(D,d)\in \Do$.
\end{rmk}

The adjoint algebra associated to $\Mo$, introduced in  \cite{Sh2},  was defined using the right adjoint of the action functor $\rho:\ca\to \End(\Mo)$. Turns out that the right adjoint is given by
$$\End(\Mo)\to \ca, \,  F\mapsto \int_{M\in \Mo} \uhom(M, F(M)).$$
In \textit{loc. cit.} the author defines
$$\A_\Mo= \int_{M\in \Mo} \uhom(M, M)$$

 We intend to generalize this construction using the functor $\rho:\ca\to Z_\Do(\End(\Mo))$ instead, and replace the usual end with the \textit{relative} end. Let us show first that the bifunctor $\uhom(-, F(-))$ posses a prebalancing.

\begin{lema}\label{inthomfunct-preb} For any $(F,\gamma)\in Z_\Do(\End(\Mo))$ let us denote by $S^F$ the functor given by
$$S^F: \Mo^{\op}\times \Mo\to \ca, (M,N)\mapsto \uhom(M, F(N)). $$
It has a canonical $\Do$-prebalancing $\beta^D_{M,N}$ as follows. For any $D\in \Do$, $\beta$ is the composition
$$S^F(M, D(N))=\uhom(M, F(D(N))) \xrightarrow{\uhom(M, (\gamma_D)_N)}   \uhom(M, D(F(N))) \xrightarrow{} $$
$$\xrightarrow{\xi^D_{M,F(N)}} \uhom(D^{\la}(M), F(N))= S^F(D^{\la}(M), N)$$
Here, isomorphism $\xi^D$ is the one described in Lemma \ref{int-hom-adjoint}. \qed
\end{lema}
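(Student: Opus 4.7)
The statement asks only for a family of natural isomorphisms $\beta^D_{M,N}\colon S^F(M,D(N))\to S^F(D^{\la}(M),N)$ of the form prescribed by the definition of a $\Do$-prebalancing, so the plan reduces to three checks: the composition is well defined, it is an isomorphism, and it is natural in $(M,N)$. Implicit in the formulation is the identification $D^{*}=D^{\la}$ of the dual of $D$ in the rigid tensor category $\ca^*_\Mo$ with its left adjoint, which exists since $\Mo$ is exact; this is what allows $\xi^{D}$ to be applied.

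Both constituents of the composition are genuine natural isomorphisms. Because $(F,\gamma)\in Z_\Do(\End(\Mo))$, for every $D\in\Do$ the half-braiding $\gamma_D\colon F\circ D\xrightarrow{\sim}D\circ F$ is an invertible module natural transformation; evaluating it at $N$ and applying the left-exact functor $\uhom(M,-)$ produces the isomorphism $\uhom(M,(\gamma_D)_N)\colon\uhom(M,F(D(N)))\to\uhom(M,D(F(N)))$. Lemma~\ref{int-hom-adjoint}, applied to the module functor $D$ and its left adjoint $D^{\la}$, then supplies the natural isomorphism $\xi^D_{M,F(N)}\colon\uhom(M,D(F(N)))\xrightarrow{\sim}\uhom(D^{\la}(M),F(N))$. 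Composing, one obtains an isomorphism with source $S^F(M,D(N))$ and target $S^F(D^{\la}(M),N)$, as required.

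For naturality, fix a morphism $g\colon N\to N'$ in $\Mo$; the diagram expressing compatibility of $\beta^D_{M,-}$ with $g$ splits into two subsquares, the first commuting by naturality of $\gamma_D$ applied to $g$ and the bifunctoriality of $\uhom$, the second commuting by naturality of $\xi^D$ in its second argument (a consequence of Lemma~\ref{int-hom-adjoint}, visible from the explicit formula~\eqref{xi-definition}). Naturality in $M$ for a morphism $f\colon M'\to M$ is analogous: it reduces to the contravariant functoriality of $\uhom(-,F(N))$ and $\uhom(-,F(D(N)))$ together with the naturality of $\xi^D$ in its first argument. There is no obstacle of substance; the only mildly delicate point is verifying the two-sided naturality of $\xi^D$, which is already packaged in the cited lemma, so everything reduces to routine diagram-chasing using formula~\eqref{xi-definition} and \eqref{adjointp-mod-funct}.
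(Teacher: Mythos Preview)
Your proposal is correct and, in fact, more detailed than what the paper provides: the lemma in the paper is stated with a terminal \qed and no proof at all, treating the construction as self-evidently well defined. Your checks—that each factor is an isomorphism and that the composite is natural in $M$ and $N$—are exactly the routine verifications the paper leaves implicit, and your identification $D^{*}=D^{\la}$ in $\ca^*_\Mo$ is the correct reading of the prebalancing convention in this context.
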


The next result is a generalization of a result of K. Shimizu. When applied to  $\Do=\vect_\ku$, it recovers  exactly  \cite[Thm. 3.4]{Sh2}. However, the proof of this result is more involved  as the one presented in {\it loc.cit.}, mainly due to the presence of  prebalancings.

\begin{teo}\label{adjoint-structure}  Let $\Mo$ be an exact indecomposable $\ca$-module category. The following assertions hold.

\begin{itemize}
\item[1.] The functor $\bar{\rho}: Z_\Do(\End(\Mo))\to \ca$ 
\begin{equation}\label{radj-rho}\bar{\rho}(F)= \oint_{M\in \Mo} \uhom(M, F(M))=\oint_{M\in \Mo} (S^F, \beta),
\end{equation}  
is well-defined, and it is a right adjoint of the functor $\rho:\ca\to Z_\Do(\End(\Mo))$. 
\item[2.]   The functor $\bar{\rho}$ is exact and faithful.

\item[3.]   The functor $\rho:\ca\to Z_\Do(\End(\Mo))$ is dominant. 
\end{itemize}
\end{teo}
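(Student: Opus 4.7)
The plan is to address the three assertions in order, with the main technical content in Part (1).

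For Part (1), my strategy is to exhibit a chain of natural isomorphisms
\[ \Hom_{Z_\Do(\End(\Mo))}(\rho(X), F) \simeq \Nat_{\!m}(L^X, F) \simeq \oint_M \Hom_\Mo(X \triangleright M, F(M)) \simeq \oint_M \Hom_\ca(X, \uhom(M, F(M))) \simeq \Hom_\ca(X, \bar{\rho}(F)), \]
where the last identification will simultaneously prove that the relative end defining $\bar{\rho}(F)$ exists. The first isomorphism is the identification $Z_\Do(\End(\Mo)) = \Do^*_\Mo$; the second is Theorem \ref{compilat-relat-end}(iii) applied to the $\Do$-module functors $L^X$ and $F$; the third is the pointwise internal-hom adjunction $\psi^X_{M, F(M)}$; the fourth follows from Theorem \ref{compilat-relat-end}(ii). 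The key technical step is verifying that $\psi^X$ intertwines the $\Do$-prebalancing \eqref{beta-for-homs} on $\Hom_\Mo(X \triangleright -, F(-))$ with the prebalancing on $\Hom_\ca(X, \uhom(-, F(-)))$ induced by Lemma \ref{inthomfunct-preb} and Remark \ref{prebalancing-for-hom(u)}; granted this compatibility, Theorem \ref{compilat-relat-end}(i) delivers the third isomorphism. I expect this compatibility check, which amounts to a diagram chase using the naturality squares \eqref{comm-psi1}--\eqref{comm-phi3}, equation \eqref{adjointp-mod-funct}, and the explicit formula \eqref{xi-definition} for $\xi^D$, to be the most computational single step of the argument.

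For Part (2) \textbf{exactness}: as a right adjoint, $\bar{\rho}$ is automatically left exact, so the content is right exactness. Given a short exact sequence $0 \to F' \to F \to F'' \to 0$ in $Z_\Do(\End(\Mo))$, the forgetful functor $\uc$ is exact (Proposition \ref{adjoint-forgetf}) and Shimizu's $\rho_\Mo^{\ra}$ is exact (\cite[Thm. 3.4]{Sh2}), so applying $\rho_\Mo^{\ra}\circ \uc$ produces a short exact sequence in $\ca$. Proposition \ref{restriction-sub} realises $\bar{\rho}(-)$ as a subfunctor of $\rho_\Mo^{\ra}\circ \uc$, namely the elements satisfying the $\Do$-prebalancing constraint \eqref{dinat-module}. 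The plan is then to verify that exactness passes to this subfunctor, which reduces to the point that a $\Do$-equivariant morphism $\rho(U) \to F$ whose image in $F''$ vanishes lifts automatically to a $\Do$-equivariant morphism $\rho(U) \to F'$: this holds because $F' \hookrightarrow F$ is an equivariant monomorphism, so the equivariance equation for the candidate lift can be pulled through and cancelled along this monomorphism.

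For Part (2) \textbf{faithfulness} and Part (3) \textbf{dominance}: since $\rho$ is an exact tensor functor (Lemma \ref{c-bimodule-relative}) with right adjoint $\bar{\rho}$, Proposition \ref{dominant-monads} makes these two assertions equivalent, so it suffices to prove either. My plan is to show directly that the counit $\epsilon_F \colon \rho(\bar{\rho}(F)) \to F$ of the adjunction is pointwise epi for every $F \in Z_\Do(\End(\Mo))$. At $M \in \Mo$ this counit factors as
\[ \epsilon_F(M) = \ev^\Mo_{M, F(M)} \circ (\pi^F_M \triangleright \id_M), \]
and composing with the inclusion $\iota \colon \bar{\rho}(F) \hookrightarrow \rho_\Mo^{\ra}(\uc F)$ from Proposition \ref{restriction-sub} one sees that pointwise surjectivity reduces, via Shimizu's dominance of $\rho_\Mo$, to controlling the image of $\iota \triangleright \id_M$ inside $\rho_\Mo^{\ra}(\uc F) \triangleright M$. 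The hard part will be exactly this image control: the argument should exploit the half-braiding $\gamma$ of $F$ together with the $\Do$-prebalancing condition \eqref{dinat-module} satisfied by the dinatural transformations $\pi^F_M$ to conclude that the restricted image already suffices to saturate $F(M)$.
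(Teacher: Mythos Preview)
Your Part (1) is essentially the paper's argument: the same chain of isomorphisms, with the compatibility of $\psi^X$ (equivalently $\phi^X$) with the two prebalancings being exactly the paper's Claim \ref{claim-thm1}, proved in the appendix as Proposition \ref{proofclaim1}.

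For Part (2) \emph{exactness}, however, your plan contains a genuine gap. You correctly observe that left exactness is free and that the content is right exactness, i.e.\ that $\bar{\rho}(F)\to\bar{\rho}(F'')$ is an epimorphism whenever $F\twoheadrightarrow F''$ is. But the reduction you write down---``a $\Do$-equivariant morphism $\rho(U)\to F$ whose image in $F''$ vanishes lifts to $\rho(U)\to F'$''---is precisely the statement that $\bar{\rho}(F')=\ker\bigl(\bar{\rho}(F)\to\bar{\rho}(F'')\bigr)$, which is left exactness again. Right exactness would instead require every equivariant $\rho(U)\to F''$ to lift to an equivariant $\rho(U)\to F$, and your cancellation-along-a-mono argument does not address this: the non-equivariant lift provided by Shimizu's $\rho_\Mo^{\ra}$ has no reason to satisfy the $\Do$-equivariance constraint. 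The paper circumvents this by proving preservation of kernels \emph{and} cokernels directly via the universal property of the relative end, with Claim \ref{claim-prebal} (Proposition \ref{proofclaim2}) supplying the verification that the induced dinatural transformation satisfies \eqref{dinat-module}; the cokernel case is declared analogous.

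For \emph{faithfulness} and \emph{dominance} your reduction via Proposition \ref{dominant-monads} is fine, and your observation that it suffices to show the counit $\epsilon_F$ is pointwise epi is correct (this is the standard characterisation of a faithful right adjoint). But you yourself flag the ``image control'' step as the hard part and give no mechanism for it; exploiting the half-braiding and the prebalancing condition is not enough of a plan, since the inclusion $\iota:\bar{\rho}(F)\hookrightarrow\rho_\Mo^{\ra}(\uc F)$ can be proper and there is no evident reason why $\iota\triangleright\id_M$ should still surject after composing with the evaluation. The paper takes a different and cleaner route: it shows $\bar{\rho}$ reflects zero objects by using the \emph{left} adjoint $I$ of the forgetful functor $\uc$ (Proposition \ref{adjoint-forgetf}), together with the dominance of $\ca\to\End(\Mo)$ from \cite[Prop.~2.6]{EG} and the identification $I(Y\triangleright -)\simeq\rho(Z)$ for $Z=\int^{X}X\ot Y\ot{}^*X$. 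This avoids the image-control problem entirely.
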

\pf 1. Let us prove first that the right adjoint to $\rho$ is given by formula \eqref{radj-rho} on objects. Let be  $X\in \ca$, $(F,\gamma)\in  Z_\Do(\End(\Mo))$. We have the functor 
$$ \Hom_{\Mo}(X\triangleright -,F(-)) :\Mo^{\op}\times \Mo \to vect_\ku,$$
with $\Do$-prebalancing given by
\begin{align}\begin{split}\label{prebalancing-hom1} &\beta^X_{D,M,N}: \Hom_{\Mo}(X\triangleright M,F(D(N))) \to \Hom_{\Mo}(X\triangleright D^{\la}(M),F(N)),\\
&\beta^X_{D,M,N}(f)=(ev_D)_{F(N)} D^{\la}( (\gamma_D)_N f) \tilde{d}^{-1}_{X,M},
\end{split}
\end{align}
for any $(D,d)\in \Do$. Here $ev_D:D^{\la}\circ D \to \Id$ is the evaluation of the adjunction $(D^{\la}, D)$, and $\tilde{d}_{X,M}: D^{\la}(X\triangleright M)\to X\triangleright D^{\la}( M)$ is the $\ca$-module structure of the functor $D^{\la}$. The formula for this prebalancing is taken from \eqref{beta-for-homs}. We have natural isomorphisms
\begin{align}\label{chain-is}
\Hom_\ca(X,\rho^{\ra}(F))&\simeq \Nat_m(\rho(X),F)\simeq \oint_{M\in \Mo} \Hom_\Mo(X \triangleright M, F(M))
\end{align}
The first isomorphism is just the definition of the right adjoint, and the second isomorphism follows from Theorem \ref{compilat-relat-end} (iii).

For any $X\in \ca$, we also have the functor $$  \Hom_{\ca}(X,\uhom(-,F(-))) :\Mo^{\op}\times \Mo \to vect_\ku.$$
This functor has a $\Do$-prebalancing given
$$b^X_{D,M,N}: \Hom_{\ca}(X,\uhom(M,F(D(N)))) \to \Hom_{\ca}(X,\uhom(D^{\la}(M),F(N))), $$
$$ b^X_{D,M,N} (f)=\beta^D_{M,N} f. $$
Here $\beta^D_{M,N} $ is the prebalancing of the functor $S^F$ given in Lemma \ref{inthomfunct-preb}.
Recall the natural isomorphisms presented in \eqref{Hom-interno}
$$\phi^X_{M,N}:\Hom_{\ca}(X,\uhom(M,N))\to \Hom_{\Mo}(X\triangleright M,N).$$
\begin{claim}\label{claim-thm1} For any $X\in \ca$ we have $$b^X_{D,M,N}  \phi^X_{M,F(D(N))}= \phi^X_{D^{\la}(M),F(N)} \beta^X_{D,M,N}.\qed$$
\end{claim}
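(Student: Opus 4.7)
The plan is to reduce both sides of the claimed identity to explicit morphisms $X \triangleright D^{\la}(M) \to F(N)$ in $\Mo$ via the adjunction isomorphism $\phi$, and then to verify equality by unwinding the construction of $\xi^D$. The main conceptual tool will be the explicit formula \eqref{xi-definition} for $\xi^D_{M,F(N)}$, which expresses it through the zigzag data of the adjunction $(D^{\la}, D)$ together with the $\ca$-module structure $\tilde{d}$ of $D^{\la}$. As a preliminary, I would record that \eqref{comm-phi3}, applied with $h = f$ and the identity element $\id_{\uhom(M,N)}$, yields the pointwise formula $\phi^X_{M,N}(f) = \ev^\Mo_{M,N} \circ (f \triangleright \id_M)$.

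The core step will be to establish, for any $g : X \to \uhom(M, D(F(N)))$, the intermediate identity
\begin{equation*}
\phi^X_{D^{\la}(M), F(N)}\bigl(\xi^D_{M, F(N)} \circ g\bigr) = \alpha^{-1}_{X \triangleright M, F(N)}\bigl(\phi^X_{M, D(F(N))}(g)\bigr) \circ \tilde{d}^{-1}_{X, M},
\end{equation*}
where $\alpha$ denotes the adjunction iso of $(D^{\la}, D)$. This should follow by substituting the defining formula \eqref{xi-definition} for $\xi^D$ into the left-hand side and exploiting the naturality of $\psi$ and $\phi$ in the first argument together with the naturality of $\tilde{d}$ and of $\alpha^{-1}$. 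Conceptually, it says that post-composition with $\xi^D$ implements the natural chain of isomorphisms
\begin{align*}
\Hom_\ca(X, \uhom(M, D(F(N)))) &\simeq \Hom_\Mo(X \triangleright M, D(F(N))) \\
&\simeq \Hom_\Mo(D^{\la}(X \triangleright M), F(N)) \\
&\simeq \Hom_\Mo(X \triangleright D^{\la}(M), F(N)) \\
&\simeq \Hom_\ca(X, \uhom(D^{\la}(M), F(N))),
\end{align*}
running through $\phi$, $\alpha^{-1}$, precomposition with $\tilde{d}^{-1}$, and $\psi$.

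With the intermediate identity in hand, the claim is a short calculation. On the left-hand side I would apply it to $g = \uhom(M, (\gamma_D)_N) \circ f$, and then use naturality of $\phi$ in the second argument (diagram \eqref{comm-phi2}) to rewrite $\phi^X_{M, D(F(N))}(g) = (\gamma_D)_N \circ \phi^X_{M, F(D(N))}(f)$. On the right-hand side I would substitute $\phi^X_{M, F(D(N))}(f)$ directly into the definition of $\beta^X_{D,M,N}$ given in \eqref{prebalancing-hom1}, and use the standard identity $\alpha^{-1}_{A,B}(h) = (\ev_D)_B \circ D^{\la}(h)$ for the inverse adjunction iso. Both sides then match term-by-term. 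The main obstacle is the intermediate identity above: this is the only place where the precise shape of \eqref{xi-definition} is used, and it requires careful bookkeeping with naturality of $\phi$ and $\psi$ in the variable $X$ together with the module-functor structure on $D^{\la}$; once that identity is established, the remaining verifications are formal.
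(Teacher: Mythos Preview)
Your proposal is correct and follows essentially the same route as the paper's proof in Proposition~\ref{proofclaim1}: both arguments unfold the explicit formula \eqref{xi-definition} for $\xi^D$, then push the auxiliary morphism $g$ through using naturality of $\psi,\phi$ in the $X$-variable (diagrams \eqref{comm-psi3}, \eqref{comm-phi3}), naturality of $\tilde d$ and of $\alpha^{-1}$, and finally use \eqref{comm-phi2} together with the counit description $\alpha^{-1}(h)=(\ev_D)\circ D^{\la}(h)$ to match the definition \eqref{prebalancing-hom1} of $\beta^X_{D,M,N}$. The only difference is organizational: you isolate the ``intermediate identity'' for $\xi^D$ as a standalone lemma before specializing to $g=\uhom(\id,(\gamma_D)_N)\circ f$, whereas the paper carries out the same manipulations in a single chain of equalities on $b^X_{D,M,N}(f)$; neither approach introduces any idea absent from the other.
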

The proof of this claim is given in the Apendix in Proposition \ref{proofclaim1}. Using this claim, it follows from Theorem \ref{compilat-relat-end} (i) that there exist an isomorphism 
\begin{align*}
 \widehat{\phi}^X: \oint_{M\in \Mo} \Hom_\ca(X, \uhom(M,F(M)))
\to  \oint_{M\in \Mo} \Hom_\Mo(X \triangleright M, F(M))
\end{align*}
such that the diagram
\begin{equation}\label{about-phi.preb}
\xymatrix{
\oint_{M\in \Mo} \Hom_\ca(X, \uhom(M,F(M)))\ar[d]^{\pi^{X,F}_M}\ar[rr]^{\widehat{\phi}^X}&&  \oint_{M\in \Mo} \Hom_\Mo(X \triangleright M, F(M))\ar[d]_{\widetilde{\pi}^{X,F}_M} \\
\Hom_\ca(X, \uhom(M,F(M))) \ar[rr]^{\phi^X_{M,N}}&& \Hom_\Mo(X \triangleright M, F(M))}
\end{equation}
is commutative. Here $ \pi^{X,F}_M$ and $\widetilde{\pi}^{X,F}_M $ are the corresponding dinatural transformations. 
 Hence, using isomorphisms \eqref{chain-is}
\begin{align*}\Hom_\ca(X,\rho^{\ra}(F))&\simeq\oint_{M\in \Mo} \Hom_\ca(X, \uhom(M,F(M))) \\
&\simeq \Hom_\ca(X, \bar{\rho}(F)).
\end{align*}
The second isomorphism follows from Theorem \ref{compilat-relat-end} (ii). This proves that the corresponding relative end exists and it is a right adjoint of the functor $\rho$.

Let us see how to define $\bar{\rho}$ on morphisms. Note that, under the identification 
$$\oint_{M\in \Mo} \Hom_\ca(X, \uhom(M,F(M))) \simeq \Hom_\ca(X, \bar{\rho}(F)),$$
if $\pi^F_M:  \oint_{M\in \Mo} \uhom(M, F(M))\to \uhom(M, F(M)) $ is the associated dinatural transformation, then
\begin{equation}\label{relation-pis}
\pi^{X,F}_M(g)=\pi^F_M \circ g,
\end{equation}
for any $g\in \Hom_\ca(X, \bar{\rho}(F)).$ This follows from the proof of Theorem \ref{compilat-relat-end} (ii).
Let $f:(F,\gamma)\to (G, \delta)$ be a morphism in  $Z_\Do(\End(\Mo))$. This  map induces a natural morphism $\widetilde{f}: S^F\to S^G$,  $\widetilde{f}_{M,N}=\uhom(\id_M, f_N)$, for any $M, N\in \Mo$. Using the naturality of $\xi$ (see Lemma \ref{int-hom-adjoint}), one can prove that
$$\beta^D_{M,N}  \widetilde{f}_{M,D(N)}= \widetilde{f}_{D^{\la}(M),N}\beta^D_{M,N},$$
for any $D\in \Do$, $M, N\in \Mo$. This implies, using Theorem \ref{compilat-relat-end} (i) that, there exists a morphism $\bar{\rho}(f):\oint_{M\in \Mo} (S^F, \beta)\to \oint_{M\in \Mo} (S^G, \beta)$ such that
\begin{align}\label{morph-rho}\widetilde{f}_{M,M} \pi^F_M= \pi^G_M \bar{\rho}(f),
\end{align}
for any $M\in \Mo.$ This finishes the proof that the functor $\bar{\rho}$ is well-defined on morphisms. 

\medbreak

2. Next, we shall prove that $\bar{\rho}$ is exact. For this, we shall prove that $\bar{\rho}$ preserves kernels and cokernels. Let $(F,\gamma), (G, \delta)$ be objects in  $Z_\Do(\End(\Mo))$, and $f:(F,\gamma)\to (G, \delta)$ be a morphism. Let $k:(K,\lambda)\to (F,\gamma)$ be a kernel of $f$. So, we have an exact sequence
$$0 \to (K,\lambda) \xrightarrow{k} (F,\gamma) \xrightarrow{f} (G, \delta).$$
We shall prove that $\ker(\bar{\rho}(f))= \bar{\rho}(k).$
Let $h: Q\to \bar{\rho}(F)$ be a morphism in $\ca$ such that $$\bar{\rho}(f)\circ h=0.$$ For any $M\in \Mo$, the map $\pi^F_M\circ h\in \Hom_\ca(Q, \uhom(M,F(M)))$. Using the natural isomorphism \eqref{Hom-interno}, set
$$ \alpha_M=\phi^Q_{M,F(M)}(\pi^F_M h):Q\triangleright M\to F(M). $$
For any $X\in \ca$,  define $L^X:\Mo\to \Mo$, $L^X(M)=X\triangleright M$. Using Lemma \ref{c-bimodule-relative} (2), one can see that $L^X\in Z_\Do(\End(\Mo))$. See also Remark \ref{left-action-functor}.
\begin{claim} $\alpha: L^Q\to F$ is a morphism in $ Z_\Do(\End(\Mo))$ such that $f\circ \alpha=0$.
\end{claim}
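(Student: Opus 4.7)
My approach will be to recognize that the prescription $\alpha_M = \phi^Q_{M,F(M)}(\pi^F_M\circ h)$ is exactly the image of $h$ under the chain of natural isomorphisms
\[
\Hom_\ca(Q,\bar\rho(F)) \simeq \oint_{M\in\Mo}\Hom_\ca(Q,\uhom(M,F(M))) \xrightarrow{\widehat{\phi}^Q} \oint_{M\in\Mo}\Hom_\Mo(Q\triangleright M,F(M))
\]
established in the proof of part 1. Because the rightmost relative end is taken over $\Do$ with the $\Do$-prebalancing of \eqref{prebalancing-hom1}, and because $L^Q$ and $F$ are naturally $\Do$-module functors with structures $l^Q$ and $\gamma$, Theorem \ref{compilat-relat-end}(iii) (applied with $\Do$ in place of $\ca$) together with Remark \ref{prebalancing-for-natm} identify this last space with $\Nat_m(L^Q,F)$, the $\Do$-module natural transformations. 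Unpacking \eqref{relativecent2} pointwise shows that such a natural transformation is exactly a morphism in $Z_\Do(\End(\Mo))$, so $\alpha\in \Hom_{Z_\Do(\End(\Mo))}(L^Q,F)$ drops out for free from this identification.

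Concretely, I would verify in order: (i) naturality of $\alpha_M$ in $M$, by combining dinaturality of $\pi^F$ with the diagrams \eqref{comm-phi1} and \eqref{comm-phi2} --- for $g:M\to N$ one rewrites $\alpha_N\circ(\id_Q\triangleright g)$ via \eqref{comm-phi1}, applies dinaturality of $\pi^F$ to swap $\uhom(g,F(N))\circ\pi^F_N$ for $\uhom(\id_M,F(g))\circ\pi^F_M$, and then uses \eqref{comm-phi2} to extract $F(g)\circ\alpha_M$; (ii) the $\Do$-prebalancing equation \eqref{dinat-module} for the family $(\alpha_M)_M$, by transporting the corresponding equation for $(\pi^F_M\circ h)_M$ across $\widehat{\phi}^Q$ via Claim \ref{claim-thm1}; and (iii) the equivalence of this prebalancing equation with the pointwise half-braiding compatibility $D(\alpha_M)\circ d^{-1}_{Q,M} = (\gamma_D)_M\circ \alpha_{D(M)}$, which is the heart of the identification $\Nat_m(L^Q,F)\simeq \Hom_{Z_\Do(\End(\Mo))}(L^Q,F)$.

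For the second assertion $f\circ\alpha=0$ I would argue directly: by diagram \eqref{comm-phi2} applied to $f_M:F(M)\to G(M)$,
\[
f_M\circ \alpha_M \;=\; \phi^Q_{M,G(M)}\bigl(\uhom(\id_M,f_M)\circ \pi^F_M\circ h\bigr).
\]
Equation \eqref{morph-rho} then replaces $\uhom(\id_M,f_M)\circ \pi^F_M$ by $\pi^G_M\circ \bar\rho(f)$, and the hypothesis $\bar\rho(f)\circ h=0$ makes the argument of $\phi^Q$ vanish, yielding $f_M\circ\alpha_M=0$ for every $M\in\Mo$, i.e.\ $f\circ\alpha=0$.

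The main obstacle I anticipate is point (iii) above: unpacking the $\Do$-prebalancing \eqref{beta-for-homs} on $\Hom_\Mo(L^Q(-),F(-))$ and matching it exactly with \eqref{relativecent2}. This requires careful manipulation of the counit of the adjunction $(D^\la,D)$ and the module structure $\tilde d$ of $D^\la$; while the identification is conceptually transparent (it is precisely the $\Do$-module version of \cite[Prop.~4.2]{BM}), this is where most of the bookkeeping lives. The remaining steps are routine applications of the naturality diagrams for $\phi$, the dinaturality of $\pi^F$, and the already-established Claim \ref{claim-thm1}.
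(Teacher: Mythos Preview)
Your proposal is correct and follows essentially the same route as the paper. The paper compresses your steps (i)--(iii) into the single observation that $\alpha_M=\widetilde{\pi}^{Q,F}_M(\widehat{\phi}^Q(h))$ via diagram \eqref{about-phi.preb} and equation \eqref{relation-pis}, and then invokes the identification $\Nat_{\!m}(Q\triangleright -,F)\simeq \oint_{M\in\Mo}\Hom_\Mo(Q\triangleright M,F(M))$ together with Remark \ref{prebalancing-for-natm}; it leaves implicit the point you flag in (iii), namely that a $\Do$-module natural transformation $L^Q\to F$ is exactly a morphism in $Z_\Do(\End(\Mo))$, and your computation of $f\circ\alpha=0$ via \eqref{comm-phi2} and \eqref{morph-rho} is verbatim the paper's.
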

\begin{proof}[Proof of claim] It follows from commutativity of diagram \eqref{about-phi.preb} and equation \eqref{relation-pis} that
$$\alpha_M=\phi^Q_{M,F(M)}(\pi^{Q,F}_M(h))= \widetilde{\pi}^{Q,F}_M( \widehat{\phi}^Q(h)).$$
Using identification 
$$ \Nat_{\!m}(Q\triangleright -,F)\simeq  \oint_{M\in \Mo} \Hom_\Mo(Q \triangleright M, F(M)),$$
and Remark \ref{prebalancing-for-natm}, it follows that $\alpha$ indeed is a natural module transformation.
For any $M\in \Mo$
\begin{align*} f_M  \alpha_M &=f_M \phi^Q_{M,F(M)}(\pi^F_M h)= \phi^Q_{M,G(M)}(\uhom(\id_M, f_M)\pi^F_M h)\\
&=\phi^Q_{M,G(M)}(\pi^G_M  \bar{\rho}(f) h)=0.
\end{align*}
The second equality is commutativity of diagram \eqref{comm-phi2}, the third equality follows from \eqref{morph-rho}. This finishes the proof of the Claim.
\end{proof}
Since $k:(K,\lambda)\to (F,\gamma)$ is the kernel of $f$, there exists a morphism $\mu: L^Q\to K$ in $ Z_\Do(\End(\Mo))$ such that
\begin{equation}\label{ab-k} k\circ \mu= \alpha. 
\end{equation}
The morphism
$$ \psi^Q_{M,K(M)}(\mu_M): Q\to \uhom(M, K(M)),$$
is a dinatural transformation. This fact can be checked using diagrams \eqref{comm-psi1}, \eqref{comm-psi2}.
\begin{claim}\label{claim-prebal} The dinatural transformation $ \psi^Q_{M,K(M)}(\mu_M)$  satisfies \eqref{dinat-module} with respect to the prebalancing of the functor $S^K$. That is, the following equation is fulfilled 
\begin{align*} \uhom((\ev_D)_M, &\id_{K(M)} ) \psi^Q_{M,K(M)}(\mu_M)=\\
&=\xi^D_{D(M),K(M)}\uhom(\id_{D(M)}, (\lambda_D)_M) \psi^Q_{D(M),K(D(M))}(\mu_{D(M)}),
\end{align*}
for any $D\in \Do$, $M\in \Mo$. Here $\ev_D: D^{\la} \circ D\to \Id$ is the evaluation of the adjunction $(D^{\la}, D)$.\qed
\end{claim}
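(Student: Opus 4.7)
The plan is to reduce the claimed identity to an equality of morphisms in $\Mo$ by applying the adjunction isomorphism $\phi^Q_{D^{\la}(D(M)),K(M)}$ to both sides. The hypothesis that $\mu\colon (L^Q,l^Q)\to (K,\lambda)$ is a morphism in $Z_{\Do}(\End(\Mo))$ unpacks, via \eqref{relativecent2} and Remark~\ref{left-action-functor}, as the intertwining relation
\[
(\lambda_D)_M\circ \mu_{D(M)}\;=\;D(\mu_M)\circ d^{-1}_{Q,M},
\]
where $d$ denotes the $\ca$-module structure of $D$; this identity will be the heart of the calculation.

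\textbf{The left-hand side.} Applying $\phi^Q_{D^{\la}(D(M)),K(M)}$ to $\uhom((\ev_D)_M,\id_{K(M)})\,\psi^Q_{M,K(M)}(\mu_M)$ and invoking naturality diagram \eqref{comm-phi1} with $g=(\ev_D)_M$, together with $\phi^Q\circ\psi^Q=\id$, the left-hand side collapses at once to
\[
\mu_M\circ (\id_Q\triangleright (\ev_D)_M).
\]

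\textbf{The right-hand side.} For the other side, I would first use diagram \eqref{comm-psi1} to rewrite
\[
\uhom(\id_{D(M)},(\lambda_D)_M)\,\psi^Q_{D(M),K(D(M))}(\mu_{D(M)})\;=\;\psi^Q_{D(M),D(K(M))}\bigl((\lambda_D)_M\circ \mu_{D(M)}\bigr).
\]
Next I would unpack $\xi^D_{D(M),K(M)}$ using the explicit formula \eqref{xi-definition}: on a morphism $f\colon Q\triangleright D(M)\to D(K(M))$, that adjunction passage produces $(\ev_D)_{K(M)}\circ D^{\la}(f)\circ \widetilde{d}^{-1}_{Q,D(M)}$, where $\widetilde{d}$ is the module structure of $D^{\la}$. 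Substituting $f=(\lambda_D)_M\circ \mu_{D(M)}$, the image of the right-hand side under $\phi^Q_{D^{\la}(D(M)),K(M)}$ equals
\[
(\ev_D)_{K(M)}\circ D^{\la}\bigl((\lambda_D)_M\circ \mu_{D(M)}\bigr)\circ \widetilde{d}^{-1}_{Q,D(M)}.
\]

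\textbf{Finishing and main obstacle.} Substituting the intertwining relation and pulling $\mu_M$ through by naturality of the counit $\ev_D$ of $(D^{\la},D)$, the previous display becomes
\[
\mu_M\circ (\ev_D)_{Q\triangleright M}\circ D^{\la}(d^{-1}_{Q,M})\circ \widetilde{d}^{-1}_{Q,D(M)}.
\]
The module-functor compatibility \eqref{adjointp-mod-funct} applied to $F=D$ identifies the last three factors with $\id_Q\triangleright (\ev_D)_M$, so the two sides of the desired equation agree. The main obstacle is purely bookkeeping: the internal-Hom adjunction, the adjunction $(D^{\la},D)$ and its counit, the two module structures $d$ and $\widetilde{d}$, and the half-braiding $\lambda_D$ all appear intertwined in both sides of the identity, and one has to identify the correct naturality square or axiom at each step.
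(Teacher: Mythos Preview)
Your proposal is correct and follows essentially the same route as the paper's proof (Proposition~\ref{proofclaim2}): both sides are pushed through $\phi^Q_{D^{\la}(D(M)),K(M)}$ to an equality in $\Mo$, the intertwining relation $(\lambda_D)_M\mu_{D(M)}=D(\mu_M)d^{-1}_{Q,M}$ is applied, and the computation is closed with \eqref{adjointp-mod-funct}. The only difference is cosmetic: the paper unwinds $\xi^D$ step by step via \eqref{xi-definition}, \eqref{comm-phi3} and naturality of $\tilde d$ and $\alpha$, whereas you summarize that passage in one line as $(\ev_D)_{K(M)}\circ D^{\la}(f)\circ \tilde d^{-1}_{Q,D(M)}$, which is the same content.
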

The proof of this claim is given in the Apendix, in Proposition \ref{proofclaim2}. It follows from Claim \ref{claim-prebal}, and the universal property of the relative end, that, there exists  a morphism $\widehat{h}:Q\to \bar{\rho}(K)$ such that
\begin{equation}\label{defi-hat-h}
\pi^K_M\circ \widehat{h} = \psi^Q_{M,K(M)}(\mu_M),
\end{equation} 
for any $M\in \Mo$. We shall prove that 
\begin{equation}\label{hh11} \bar{\rho}(k)  \widehat{h} =h.
\end{equation}
This will end the proof that  $\ker(\bar{\rho}(f))= \bar{\rho}(k).$ Indeed, applying $\pi^F_M$ to the left hand side of \eqref{hh11} we get
\begin{align*}  \pi^F_M \bar{\rho}(k)  \widehat{h} &=\uhom(\id_M, k_M) \pi^K_M \widehat{h} \\
&= \uhom(\id_M, k_M) \psi^Q_{M,K(M)}(\mu_M)\\
&= \psi^Q_{M,F(M)}(k_M\mu_M)= \psi^Q_{M,F(M)}(\alpha_M)\\
&=  \pi^F_M h
\end{align*}
The first equality follows from \eqref{morph-rho}, the second equality follows from \eqref{defi-hat-h}, the third one follows from \eqref{comm-psi1}, the fourth equality is  \eqref{ab-k} and the last one is the definition of $\alpha$.  Equation \eqref{hh11} then follows from the universal property of the dinatural transformation $\pi^F. $

This finishes the proof that $\ker(\bar{\rho}(f))= \bar{\rho}(k).$ The proof that $\bar{\rho}$ preserves cokernels is done similarly. Hence $\bar{\rho}$ is exact. 

\bigbreak

Let us prove that $\bar{\rho}$ is faithfull. Since it is exact, it is enough to prove  that reflects zero objects. Take a non-zero object $(F, \gamma)\in  Z_\Do(\End((\Mo))$. Again, lets keep in mind the  commutative diagram 
\begin{equation}\label{diagram1}
\xymatrix{&\ca 
\ar[dl]_{\rho}
\ar[dr]^{X\mapsto X \triangleright -}&\\
 Z_\Do(\End((\Mo)) \ar[rr]^{\uc}&&  \End((\Mo),}
\end{equation}
where $\uc: Z_\Do(\Rex(\Mo)) \to \End((\Mo)$ is the forgetful functor.  Since $\Mo$ is an exact indecomposable as a $\ca$-module category, it follows from \cite[Prop. 2.6 (ii)]{EG} that the functor $\ca\to  \Rex(\Mo)$, $X\mapsto X \triangleright -$ is dominant. Whence, there exists an object $Y\in \ca$ such that $$\Nat(Y\triangleright -,  \uc(F, \gamma))\neq 0.$$ Thus
$$0\neq \Nat(Y\triangleright -,  \uc(F, \gamma))\simeq  \Hom_{Z_\Do(\End((\Mo))}( I(Y\triangleright -) , (F, \gamma)). $$
Here $I: \Rex(\Mo)\to Z_\Do(\Rex(\Mo))$ is the left adjoint of the forgetful functor, and it is given by
$$I(F)=\int^{X\in \ca} X\triangleright F \triangleleft {}^*X. $$
The functor $I$ was presented in Proposition \ref{adjoint-forgetf}. Let us denote 
$$ Z=\int^{X\in \ca} X\ot Y \ot {}^*X. $$
Then, $I(Y\triangleright -)\simeq\rho(Z)$. 
Thus
$$0\neq   \Hom_{Z_\Do(\Rex(\Mo))}( \rho(Z), (F, \gamma)) \simeq \Hom_\ca( Z, \bar{\rho} (F, \gamma)),$$
which implies that $\bar{\rho} (F, \gamma) \neq 0$.

 \medbreak
3. It follows from Proposition \ref{dominant-monads} since $\rho$ is faithful.
\epf

\subsection{\'Etale algebras corresponding to tensor subcategories of $\ca^*_\Mo$ }

In this Section we shall construct étale algebras out of tensor subcategories. This construction was already  presented in  \cite[Thm. 4.10]{DMNO}. Let $\Do \subseteq \ca^*_\Mo$ be a tensor subcategory.  Observe that, since $Z_\Do(\End(\Mo))$ is a $\ca$-bimodule category (see Lemma \ref{c-bimodule-relative}), then we can consider the relative center $Z_\ca(Z_\Do(\End(\Mo)))=Z_\Do(\ca^*_\Mo).$

\begin{prop}\label{z-rho} The following assertions hold.
\begin{itemize}
\item[1.] The functor $\rho:\ca\to Z_\Do(\End(\Mo))$ is a $\ca$-bimodule functor. 

\item[2.]  Using (1) we can consider the functor $Z(\rho):Z(\ca) \to Z_\Do(\ca^*_\Mo).$ This tensor functor is dominant.

\end{itemize}
\end{prop}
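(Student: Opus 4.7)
The plan for part (1) is to equip $\rho$ with an $\ca$-bimodule functor structure built directly from the associativity $m$ of $\Mo$. For $X,Y\in\ca$, the components $m_{X,Y,M}:(X\otimes Y)\triangleright M\to X\triangleright (Y\triangleright M)$ assemble into natural isomorphisms
$$\mu^l_{X,Y}:\rho(X\otimes Y)\xrightarrow{\sim} X\triangleright \rho(Y),\qquad \mu^r_{X,Y}:\rho(X\otimes Y)\xrightarrow{\sim} \rho(X)\triangleleft Y$$
of underlying endofunctors. The pentagon \eqref{left-modulecat1} supplies the associativity coherences of $\mu^l$ and $\mu^r$ as well as the compatibility between the left and right actions (where the constraint $\delta$ of the bimodule structure on $Z_\Do(\End(\Mo))$ from Lemma \ref{c-bimodule-relative} reduces to the identity), while \eqref{left-modulecat2} gives unitality. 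The substantive check is to verify that $\mu^l_{X,Y}$ and $\mu^r_{X,Y}$ are in fact morphisms in $Z_\Do(\End(\Mo))$, i.e., that they intertwine the $\Do$-half-braidings of the source and target given in Lemma \ref{c-bimodule-relative} (1) and Remark \ref{left-action-functor}. Since these half-braidings are built from the inverse of the $\ca$-module structures of the functors $D\in\Do$, this is a diagram chase which reduces to equation \eqref{modfunctor1} applied to $D$.

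For part (2), the strategy is to apply the 2-functor $\Zc_\ca$ of Proposition \ref{about-Z} to the $\ca$-bimodule functor $\rho$ produced in (1), obtaining
$$Z(\rho):=\Zc_\ca(\rho):\ Z(\ca)=\Zc_\ca(\ca)\ \longrightarrow\ \Zc_\ca\big(Z_\Do(\End(\Mo))\big).$$
There is a canonical identification $\Zc_\ca(Z_\Do(\End(\Mo)))\simeq Z_\Do(\ca^*_\Mo)$ obtained by trading the order of the two half-braidings: a triple $(F,\gamma,\sigma)$ on the left corresponds to $((F,\sigma),\gamma)$ on the right, where $(F,\sigma)\in \Zc_\ca(\End(\Mo))=\ca^*_\Mo$ and $\gamma$ is now viewed as a $\Do$-half-braiding of the $\ca$-module functor $(F,\sigma)$. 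Monoidality of $\rho$ from Lemma \ref{c-bimodule-relative} (2) promotes $Z(\rho)$ to a tensor functor, and exactness is inherited from that of $\rho$.

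Finally, to deduce dominance, the plan is to invoke Proposition \ref{dominant-monads}, for which it suffices to produce an exact and faithful right adjoint to $Z(\rho)$. By Theorem \ref{adjoint-structure} the functor $\bar{\rho}:Z_\Do(\End(\Mo))\to \ca$ is right adjoint to $\rho$ and is both exact and faithful. As a right adjoint to a bimodule functor, $\bar{\rho}$ inherits a canonical $\ca$-bimodule structure assembled from $\mu^l,\mu^r$ together with the unit and counit of the adjunction, in direct analogy with the lemma on adjoints of module functors at the end of Section 2. Proposition \ref{about-Z} (1)--(2) then gives that $\Zc_\ca(\bar{\rho}):Z_\Do(\ca^*_\Mo)\to Z(\ca)$ is right adjoint to $Z(\rho)$ and remains exact and faithful, and Proposition \ref{dominant-monads} yields that $Z(\rho)$ is dominant. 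The hardest step is the compatibility of $\mu^l,\mu^r$ with the $\Do$-half-braidings in part (1), together with the verification that the inherited bimodule structure on $\bar{\rho}$ really makes $(\rho,\bar{\rho})$ a bimodule adjunction so that Proposition \ref{about-Z} is applicable; everything else is formal.
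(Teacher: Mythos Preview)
Your proposal is correct and follows essentially the same route as the paper: part (1) is the straightforward computation the paper alludes to, and for part (2) you, like the paper, pass to $Z(\bar{\rho})$ via Proposition \ref{about-Z}, use Theorem \ref{adjoint-structure} to see it is exact and faithful, and conclude dominance from Proposition \ref{dominant-monads}. Your write-up is more explicit than the paper's terse version, in particular flagging that $\bar{\rho}$ must be endowed with a compatible $\ca$-bimodule structure before Proposition \ref{about-Z} applies, a point the paper leaves implicit.
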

\pf The proof of (1) follows by a straightforward computation. It follows from Proposition \ref{about-Z} that $Z(\bar{\rho})$ is a right adjoint to $Z(\rho)$, and this functor is exact and faithful. It follows from Proposition \ref{dominant-monads} that $Z(\rho)$ is dominant.
\epf
Now we can give the main definition of the paper. 
\begin{defi}\label{def-a1}   Let $\Do \subseteq \ca^*_\Mo$ be a tensor subcategory. The object 
$$\Ac_{\Do, \Mo}=Z(\bar{\rho})(\Id)=\oint_{M\in \Mo} (\uhom(-,-), \beta)\in Z(\ca)$$
is called the \textit{adjoint algebra of $\Mo$ relative} to $\Do$. The morphisms $\pi_M:\Ac_{\Do, \Mo}\to  \uhom(M,M)$ are the dinatural transformations associated to this relative end. 
\end{defi}

\begin{rmk} Since the relative end coincides with the usual end when $\Do=\vect_\ku$, then $\Ac_{\vect_\ku, \Mo}=\Ac_{ \Mo}$ is the adjoint algebra associated to $\Mo$, as introduced by Shimizu \cite{Sh2}.
\end{rmk} 

The proof of the next result follows \textit{mutatis mutandis} from the proof presented in \cite{Sh2}.
\begin{prop}\label{alg-braid-adj} The object $\Ac_{\Do, \Mo}$ has a half-braiding, multiplication  and unit
$$\sigma_X: \Ac_{\Do, \Mo}\ot X\to X\ot \Ac_{\Do, \Mo}, $$
$$m: \Ac_{\Do, \Mo}\ot \Ac_{\Do, \Mo}\to \Ac_{\Do, \Mo},\quad u:\uno\to \Ac_{\Do, \Mo},$$
 determined by 
 \begin{equation}\label{braid-din}
 \xymatrix@C=90pt@R=16pt{
      \cha_\Mo \otimes X
      \ar[r]^-{\pi^\Mo_{X \triangleright M} \otimes \id_X}
      \ar[dd]_{\sigma^{\Mo}_X}
      & \uhom(X \triangleright M,  X \triangleright  M) \otimes X
      \ar[d]^{\mathfrak{b}_{X, M, X \triangleright  M}} \\
      & \uhom(M,  X \triangleright  M)
      \ar[d]^{ \mathfrak{a}_{X,M,M}} \\
      X \otimes \cha_\Mo
      \ar[r]^-{\id_X \otimes \pi^\Mo_M}
      & X \ot \uhom(M, M)
    }
 \end{equation}
 \begin{equation}\label{mult-din1}\pi_M\circ m=\comp^\Mo_{M,M} (\pi_M\ot \pi_M), \,\, \pi_M\circ u= \coev^\Mo_{ M},
 \end{equation}  
 for any $X\in \ca$, $M\in \Mo$. Moreover, $\Ac_{\Do, \Mo}$ is a commutative algebra, that is $ m\circ \sigma_{\Ac_{\Do, \Mo}}=m.$ \qed
\end{prop}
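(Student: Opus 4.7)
The plan is to mirror Shimizu's construction in \cite{Sh2} while handling the extra compatibility with the $\Do$-prebalancing $\beta$ of Lemma \ref{inthomfunct-preb}. Throughout, I use the universal property of the relative end: a morphism into $\Ac_{\Do,\Mo}$ (or into $X\ot\Ac_{\Do,\Mo}\ot Y$, which represents the relative end $\oint_M X\ot \uhom(M,M)\ot Y$ since tensoring by exact functors commutes with the relative end) is determined by dinatural transformations $\pi_M$ that satisfy equation \eqref{dinat-module} with respect to the appropriate prebalancing. Hence, for each of $\sigma_X$, $m$, and $u$, my first task is to show that the right-hand sides of \eqref{braid-din} and \eqref{mult-din1} assemble into dinatural transformations satisfying the $\Do$-prebalancing condition; the morphisms themselves are then uniquely produced by universality.

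The main obstacle is Step 2, the prebalancing verification, which has no analogue in the classical setting. I illustrate it for the unit: the required identity reduces, after unwinding the prebalancing of $\uhom(-,F(-))$ with $F=\Id$, to
\[\uhom\bigl((\ev_D)_M,\id_M\bigr)\,\coev^\Mo_M \;=\;\xi^D_{D(M),M}\,\coev^\Mo_{D(M)},\]
for every $D\in\Do$. Expanding $\xi^D$ via \eqref{xi-definition} and $\coev^\Mo$ via its definition in Section \ref{subsection:internal hom} turns this into the naturality of $\psi$ combined with the triangle identity for the adjunction $(D^{\la},D)$ and the module structure of $D^{\la}$ (using equation \eqref{adjointp-mod-funct}); this is exactly the pattern of the calculations in the Appendix (Propositions \ref{proofclaim1} and \ref{proofclaim2}). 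The verifications for $m$, which uses the properties of $\comp^\Mo_M$ together with the behavior of $\xi^D$ under composition, and for $\sigma_X$, which additionally uses the naturality of $\mathfrak{a}$ and $\mathfrak{b}$ in $M$ and their compatibility with the half-braiding of the identity functor, proceed along the same lines and are expected to be routine but lengthy.

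Once the structure maps are constructed, the algebra axioms and the half-braiding hexagon follow by the uniqueness part of the universal property: two morphisms with codomain in a relative end agree iff they agree after composing with every $\pi_M$. For instance, to prove associativity of $m$, I compose both sides with $\pi_M$; using \eqref{mult-din1} twice this reduces to the associativity of $\comp^\Mo_M$, which is just associativity of composition of internal Homs. The unit axioms and the half-braiding pentagon for $\sigma$ reduce analogously to identities among $\coev^\Mo_M$, $\comp^\Mo_M$, $\mathfrak{a}$ and $\mathfrak{b}$ that are established in \cite{Sh2} and are purely intrinsic to $\uhom$ (so they transfer without modification).

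Finally, commutativity $m\circ\sigma_{\Ac_{\Do,\Mo}}=m$ is checked by applying $\pi_M$ to both sides: the left-hand side becomes, by \eqref{mult-din1} and \eqref{braid-din}, a composite that, after a diagram chase using $\mathfrak{a}$, $\mathfrak{b}$ and naturality of $\ev$, $\coev$, coincides with $\comp^\Mo_M\circ(\pi_M\ot\pi_M)$, i.e.\ with $\pi_M\circ m$. This is the same computation as in \cite[Subsection 4.2]{Sh2}, and because it only involves the dinatural transformations and the internal Hom structure, it is insensitive to the prebalancing; hence it carries over verbatim. Consequently $\Ac_{\Do,\Mo}$ is a commutative algebra in $Z(\ca)$.
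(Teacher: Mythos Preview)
Your proposal is correct and takes essentially the same approach as the paper: the paper's entire proof is the sentence ``follows \textit{mutatis mutandis} from the proof presented in \cite{Sh2}'', and your outline is precisely an unpacking of what that adaptation entails, including the one genuinely new ingredient (checking that the candidate dinatural transformations satisfy the prebalancing condition \eqref{dinat-module}). If anything, you are more explicit than the paper about this extra verification, which the paper absorbs into ``mutatis mutandis''.
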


\begin{rmk} Equations \eqref{braid-din}, \eqref{mult-din1} are the same as equations \eqref{half-braidig-ch} and \eqref{product-cha-M}, describing the product and the half-braiding in terms of the dinatural transformations.
\end{rmk}

\begin{prop}\label{et-1} Le $\Mo$ be an exact indecomposable module category over $\ca$, and let  $\Do \subseteq \ca^*_\Mo$ be a tensor subcategory. The following statements hold:
\begin{itemize}

\item[1.]  The object  $\Ac_{\Do, \Mo}$ is a connected \'etale algebra  in the center $\Zc(\ca)$ such that there is a monoidal equivalence
$$Z(\ca)_{A_{\Do,\Mo}} \simeq Z_\Do(\ca^*_\Mo).$$
\item[2.]
 If $\Do_1\subseteq  \Do_2 \subseteq \ca^*_\Mo$ are tensor subcategories, then there is an algebra inclusion
$$ \Ac_{\Do_2, \Mo} \subseteq \Ac_{\Do_1, \Mo}.$$

\item[3.] $$\fpd_{Z(\ca)}(\Ac_{\Do, \Mo})= \frac{\fpd(\ca)}{\fpd(\Do)}.$$
\end{itemize} 
\end{prop}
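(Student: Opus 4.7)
The plan is to deduce all three statements from the dominant--adjoint machinery for étale algebras (Corollary \ref{from adj-to-etale} and Proposition \ref{central-adj-alg}) applied to the tensor functor $Z(\rho)\colon Z(\ca)\to Z_\Do(\ca^*_\Mo)$, combined with Proposition \ref{restriction-sub} and a Frobenius--Perron count on the relative center.

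For (1), I would apply Corollary \ref{from adj-to-etale} to $F=Z(\rho)$. Proposition \ref{z-rho}(2) supplies that $Z(\rho)$ is an exact dominant tensor functor, while Theorem \ref{adjoint-structure}(2) states that $\bar\rho$ is exact and faithful; Proposition \ref{about-Z}(2) then transports these properties to the right adjoint $Z(\bar\rho)$. Hence $\Ac_{\Do,\Mo}=Z(\bar\rho)(\uno)$ is a connected étale algebra in $Z(\ca)$, and the monoidal equivalence $Z(\ca)_{\Ac_{\Do,\Mo}}\simeq Z_\Do(\ca^*_\Mo)$ is exactly Proposition \ref{central-adj-alg}(2) applied to the same adjunction. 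Connectedness reduces via the adjunction to
\[
\Hom_{Z(\ca)}(\uno,Z(\bar\rho)(\uno))\simeq \Hom_{Z_\Do(\ca^*_\Mo)}(Z(\rho)(\uno),\uno)=\Hom(\uno,\uno)=\ku,
\]
since $Z(\rho)$ is unital.

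For (2), I would invoke Proposition \ref{restriction-sub} applied to the inclusion $\Do_1\subseteq\Do_2$ with the prebalanced functor $\uhom(-,-)$ of Lemma \ref{inthomfunct-preb}. This produces a monomorphism $\iota\colon \Ac_{\Do_2,\Mo}\hookrightarrow \Ac_{\Do_1,\Mo}$ in $\ca$ intertwining the dinatural transformations $\pi^{(2)}_M$ and $\pi^{(1)}_M$. Since the multiplications, units and half-braidings of both algebras are characterized by the same identities \eqref{braid-din}--\eqref{mult-din1} in terms of $\comp^\Mo_M$, $\coev^\Mo_M$ and the structural maps $\mathfrak{a},\mathfrak{b}$, the universal property of the relative end forces $\iota$ to be a morphism of central algebras.

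For (3), combining the equivalence from (1) with the standard identity
\[
\fpd(Z(\ca)_{\Ac_{\Do,\Mo}})\cdot\fpd(\Ac_{\Do,\Mo})=\fpd(Z(\ca))=\fpd(\ca)^2
\]
reduces everything to computing $\fpd(Z_\Do(\ca^*_\Mo))$. Using $\fpd(\ca^*_\Mo)=\fpd(\ca)$ (Morita invariance) and the identification $Z_\Do(\ca^*_\Mo)\simeq (\ca\boxtimes\Do^\rev)^*_\Mo$ under which $\Mo$ becomes an exact indecomposable $(\ca\boxtimes\Do^\rev)$-module, one gets $\fpd(Z_\Do(\ca^*_\Mo))=\fpd(\ca\boxtimes\Do^\rev)=\fpd(\ca)\fpd(\Do)$, whence $\fpd(\Ac_{\Do,\Mo})=\fpd(\ca)/\fpd(\Do)$.

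The main obstacle is (3): one has to justify the Frobenius--Perron computation for the relative center, either by checking that the bimodule category $\Mo$ remains exact and indecomposable over $\ca\boxtimes\Do^\rev$, or, alternatively, by extracting the ratio directly from the monadic adjunction $(Z(\rho),Z(\bar\rho))$ established in (1). Part (2) has a minor subtlety of verifying algebra-compatibility of $\iota$, but this is forced by the dinatural characterizations.
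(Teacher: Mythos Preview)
Your proposal is correct and follows essentially the same route as the paper: for (1) the paper applies Corollary~\ref{from adj-to-etale} and Proposition~\ref{central-adj-alg} to the adjunction $(Z(\rho),Z(\bar\rho))$ via Proposition~\ref{z-rho}, for (2) it invokes Proposition~\ref{restriction-sub}, and for (3) it uses $\fpd(Z_\Do(\ca^*_\Mo))=\fpd(\Do)\fpd(\ca)$ together with \cite[Lemma~6.2.4]{EGNO}. Your treatment is in fact slightly more detailed: the paper simply asserts the identity $\fpd(Z_\Do(\ca^*_\Mo))=\fpd(\Do)\fpd(\ca)$ without justification, whereas you supply one via the identification $Z_\Do(\ca^*_\Mo)\simeq(\ca\boxtimes\Do^{\rev})^*_\Mo$, and you also spell out why the monomorphism $\iota$ in (2) respects the algebra and half-braiding structures, which the paper leaves implicit here (though it argues similarly in the proof of Proposition~\ref{exact-seq-adj}).
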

\pf 1. Consider the adjoint functors $(Z(\rho), Z(\bar{\rho}))$. It follows from Proposition \ref{z-rho} that they satisfy conditions of Corollary \ref{from adj-to-etale}. Whence, the algebra $Z(\bar{\rho})(\Id)=\Ac_{\Do, \Mo}$ is étale.
It follows from Proposition \ref{central-adj-alg} that there is a tensor equivalence $Z(\ca)_{\Ac_{\Do,\Mo}} \simeq Z_\Do(\ca^*_\Mo).$

2. It follows from Proposition \ref{restriction-sub}.

3. Since $\fpd(Z_\Do(\ca^*_\Mo))=\fpd(\Do)\fpd(\ca), $ the formula for the dimension of $\Ac_{\Do, \Mo}$ follows from \cite[Lemma 6.2.4]{EGNO}.

\epf

\section{Some concrete computations}\label{Section:comp}
For the rest of this Section  $\ca$ will denote  a tensor category. We shall give some explicit computations of the relative adjoint $\Ac_{\Do, \Mo}$ for diverse $\ca$-module categories $\Mo$ and tensor subcategories $\Do\hookrightarrow  \ca^*_\Mo$.

\medbreak

To compute examples of such objects, let us describe, more explicitly, what equation \eqref{dinat-module} looks like in this particular case. 

In this case, the relative end is applied to the functor
$$\uhom(-,-): \Mo^{\op}\times \Mo\to \ca. $$  
If $\Do\hookrightarrow  \ca^*_\Mo$ is a tensor subcategory, then $\Do$ acts on $\Mo$ by evaluation. The functor $\uhom(-,-)$ has a $\Do$-prebalancing 
$$\xi^D_{M,N}: \uhom(M, D(N))\to \uhom(D^{\la}(M), N),$$
described in Lemma \ref{int-hom-adjoint}. If we denote
$$\ev^D:D^{\la}\circ D\to \Id,$$
the counit if the adjunction $(D^{\la}, D)$, then, in this case, a dinatural transformation $\pi_M:E\to  \uhom(M,M)$ satisfies equation \eqref{dinat-module}  if and only if
\begin{equation}\label{dinat-module2} \uhom((\ev^D)_M,  \id_M) \pi_M= \xi^D_{D( M),M} \pi_{D( M)},
\end{equation}
for any $M\in \Mo$.

\subsection{Case $\Rep(G/N)\hookrightarrow (\ca\rtimes G)^*_{\ca}$}

Let $G$ be a finite group acting on $\ca$ by tensor autoequivalences $(F_g,\xi^g):\ca\to \ca$. Here, for any $g\in G$
$$\xi^g_{X,Y}: F_g(X)\ot F_g(Y)\to F_g(X\ot Y) $$
are the natural isomorphisms giving the monoidal functor structure on $F_g$.
We can consider  the
$G$-\textit{crossed product tensor category} $\ca\rtimes G$. 

\medbreak

As an abelian category $\ca\rtimes G=
\bigoplus_{g\in G}\ca_g$, where $\ca_g =\ca$. An object in $\ca_g$ will be denoted by $[X, g]$, where $X\in \ca$. Similarly, a morphism in  $\ca\rtimes G$ will be denoted by $$[f,g]: [X,g]\to [Y,g]$$
where $f:X\to Y$ is a morphism in $\ca$.
The tensor product of $\ca\rtimes G$ is determined by
$$[X, g]\otimes [Y,h]:= [X\otimes F_g(Y),
gh],\  \  \   X,Y\in \ca,\  \   g,h\in G,$$ and
the unit object is $[\uno,1]$. See \cite{Ta} for details on the associativity
constraint and a proof of the pentagon identity.

The category $\ca\rtimes G$ acts on $\ca$ as follows. If $[X, g] \in \ca\rtimes G$ and $Y\in \ca$ then
\begin{equation}
 [X, g]\triangleright Y= X\ot F_g(Y).
\end{equation}
It is well-known that the dual category $(\ca\rtimes G)^*_{\ca}$ is tensor equivalent to the $G$-equivariantization $\ca^G$. 
\begin{lema} For any  normal subgroup $N\triangleleft G$, there is an inclusion of tensor categories
$$ \Rep(G/N)\hookrightarrow (\ca\rtimes G)^*_{\ca}.$$
\end{lema}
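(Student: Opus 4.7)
The plan is to reduce the problem to constructing a fully faithful tensor functor $\Rep(G/N) \to \ca^G$, and then transport it to $(\ca \rtimes G)^*_\ca$ along the well-known tensor equivalence $(\ca \rtimes G)^*_\ca \simeq \ca^G$ (which the paper has just cited). I will factor the desired inclusion as
\[
\Rep(G/N) \xrightarrow{\;\pi^*\;} \Rep(G) \xrightarrow{\;j\;} \ca^G,
\]
where $\pi \colon G \twoheadrightarrow G/N$ is the projection.

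The first arrow $\pi^*$ is the usual restriction of scalars along a surjective group morphism: any $(W, \bar\rho) \in \Rep(G/N)$ pulls back to $(W, \bar\rho \circ \pi) \in \Rep(G)$. This is patently a strict tensor functor and is fully faithful because $\pi$ is surjective, so $G$-equivariance of a linear map is equivalent to $G/N$-equivariance.

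The second arrow $j$ is the canonical embedding of representations of $G$ into the equivariantization. Using the canonical $\vect_\ku$-action $\blacktriangleright$ on the abelian category $\ca$ from Section~\ref{Section:vect-action}, I set
\[
j(V,\rho) \;=\; \bigl(V \blacktriangleright \uno,\; \{u_g\}_{g\in G}\bigr),
\]
where $u_g \colon F_g(V \blacktriangleright \uno) \to V \blacktriangleright \uno$ is defined as the composite
\[
F_g(V \blacktriangleright \uno) \;\xrightarrow{\;\sim\;}\; V \blacktriangleright F_g(\uno) \;\xrightarrow{\;\id_V \blacktriangleright (F_g)_0\;}\; V \blacktriangleright \uno \;\xrightarrow{\;\rho(g)\blacktriangleright \id_\uno\;}\; V \blacktriangleright \uno.
\]
The first arrow is the natural isomorphism expressing that the $\ku$-linear functor $F_g$ commutes with the $\vect_\ku$-action, and $(F_g)_0 \colon F_g(\uno)\to \uno$ is the unit constraint of the tensor autoequivalence $F_g$. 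The tensor structure on $j$ comes from the identifications $(V\otimes W)\blacktriangleright \uno \simeq (V\blacktriangleright \uno)\otimes(W\blacktriangleright \uno)$ (bilinearity of $\otimes$ together with $\uno \otimes \uno \simeq \uno$), which are clearly $G$-equivariant.

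Then I verify three things. \emph{Equivariance cocycle:} the identity $u_{gh}\circ (\gamma_{g,h})_{V\blacktriangleright \uno} = u_g\circ F_g(u_h)$, where $\gamma_{g,h}\colon F_g\circ F_h \to F_{gh}$ is the composition constraint of the action, reduces after untwisting by the canonical isomorphisms above to the two equations $\rho(gh) = \rho(g)\rho(h)$ on $V$ and the coherence of $\gamma$ evaluated on $\uno$, combined with $(F_g)_0 \circ F_g((F_h)_0) \circ (\gamma_{g,h})_\uno = (F_{gh})_0$, which is part of the axioms of a monoidal action of $G$ on $\ca$. \emph{Tensor functoriality:} the constraint $j(V)\otimes j(W)\simeq j(V\otimes W)$ is compatible with the $u_g$'s because $\rho(g)\otimes \rho(g) = (\rho\otimes\rho)(g)$. \emph{Full faithfulness:} by the universal property defining $\blacktriangleright$ and the fact that $\End_\ca(\uno) = \ku$, we have $\Hom_\ca(V\blacktriangleright \uno, W \blacktriangleright \uno)\simeq \Hom_\ku(V,W)$, and passing to $G$-invariants identifies $\Hom_{\ca^G}(j(V),j(W))$ with $\Hom_G(V,W)$. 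Composing $j \circ \pi^*$ with the tensor equivalence $\ca^G \simeq (\ca\rtimes G)^*_\ca$ produces the desired tensor inclusion.

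The main obstacle is bookkeeping: the cocycle verification for $\{u_g\}$ requires carefully tracking three layers of naturality (the natural commutation of $F_g$ with $\blacktriangleright$, the unit constraints $(F_g)_0$, and the composition constraints $\gamma_{g,h}$). None of this is conceptually deep, but it is the only nontrivial check; everything else is formal from the categorical setup already established in the preliminaries.
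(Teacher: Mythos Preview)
Your proof is correct and follows essentially the same route as the paper: both factor the inclusion as $\Rep(G/N)\hookrightarrow \Rep(G)\hookrightarrow \ca^G \simeq (\ca\rtimes G)^*_\ca$, using restriction along the surjection $G\twoheadrightarrow G/N$ and the embedding $V\mapsto V\blacktriangleright\uno$. The paper is terser about the cocycle and full-faithfulness checks you spell out, but then adds an explicit description of the resulting $\ca\rtimes G$-module functors $L_V$ (needed for the subsequent computation of $\Ac_{\Rep(G/N),\ca}$), which you omit.
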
 
\pf Since the category of finite-dimensional vector spaces $\vect_\ku$ is included in $\ca$ via: $V \mapsto  V\blacktriangleright \uno$. Here $\blacktriangleright: \vect_\ku\times \ca\to \ca$ is the canonical action of $\vect_\ku$ on any finite abelian $\ku$-linear category $\ca$. See Section \ref{Section:vect-action}. Hence $\Rep(G)\hookrightarrow \ca^G$. Since the canonical projection $G \twoheadrightarrow G/N$ induces an inclusion $\Rep(G/N)\hookrightarrow \Rep(G)$, and $\ca^G\simeq  (\ca\rtimes G)^*_{\ca}$, the inclusion $ \Rep(G/N)\hookrightarrow (\ca\rtimes G)^*_{\ca}$ follows. 

\medbreak

To be more explicit, let us describe this inclusion. If $V\in \Rep(G/N)$, then $V\in \Rep(G)$ with action given by
$$ g\cdot v= \overline{g}\cdot v.$$
This implies that, for any $g\in G$, there is an isomorphism
$$s_g: V\blacktriangleright \uno \to V\blacktriangleright  F_g(\uno)\simeq  F_g( V\blacktriangleright \uno),$$
given by the action of $g$ on $V$. For any $V\in \Rep(G/N)$ let us denote by $L_V:\ca\to \ca$ the $\ca\rtimes G$-module functor described as follows. As a functor
$$L_V(Y)= Y\ot (V \blacktriangleright \uno), $$
For any $Y\in \ca$. The module structure of the functor $L_V$ is given by the composition
$$L_V([Y,g]\triangleright X)=Y\ot F_g(X)\ot (V\blacktriangleright \uno)\xrightarrow{\id_{Y\ot F_g(X)}\ot s_g} Y\ot F_g(X)\ot F_g( V\blacktriangleright \uno)\to$$
$$ \xrightarrow{ \id_Y\ot \xi^g_{X, V\blacktriangleright \uno}} Y\ot F_g(X\ot V\blacktriangleright \uno)= [Y,g]\triangleright L_V(X),$$
for any $X\in \ca$, $[Y,g]\in \ca\rtimes G$.
\epf
We aim to compute the relative adjoint $\Ac_{\Rep(G/N),\ca}$ associated with the inclusion  $ \Rep(G/N)\hookrightarrow (\ca\rtimes G)^*_{\ca}$, as an object in $\ca\rtimes G.$ For any $g\in G$ we shall denote
$$\Ac_g= \int_{X\in \ca}  X\ot F_g(X)^*\in \ca,$$
with the corresponding dinatural transformations
$$\pi^g_X:\Ac_g\to   X\ot F_g(X)^*.$$ 
If $N\triangleleft G$ is a normal subgroup, we shall denote
$$\Ac(N,\ca)=\oplus_{n\in N} [\Ac_n,n] \in \ca\rtimes G. $$
\begin{prop} There exists an isomorphisms of objects
$$\Ac_{\Rep(G/N),\ca}\simeq \Ac(N,\ca).$$
\end{prop}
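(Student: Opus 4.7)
The approach is to identify $\Ac_{\Rep(G/N),\ca}$ as a subobject of the ordinary Shimizu adjoint algebra and then compare Frobenius--Perron dimensions. First, computing the internal Hom from the adjunction $\Hom_\ca([X,g]\triangleright Y, Z) = \Hom_\ca(X\ot F_g(Y), Z) \simeq \Hom_\ca(X, Z\ot F_g(Y)^*)$ yields
$$\uhom(Y,Z) = \bigoplus_{g\in G}[Z\ot F_g(Y)^*, g] \in \ca\rtimes G,$$
so Shimizu's ordinary adjoint algebra of $\ca$ over $\ca\rtimes G$ is $\Ac_{\vect_\ku,\ca} = \bigoplus_{g\in G}[\Ac_g,g]$, with componentwise dinatural transformations $\pi^g_Y:\Ac_g\to Y\ot F_g(Y)^*$ obtained from the usual end in each $G$-component.

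By Proposition \ref{restriction-sub}, the relative end embeds as a subobject
$$\iota:\Ac_{\Rep(G/N),\ca}\hookrightarrow \bigoplus_{g\in G}[\Ac_g,g],$$
singled out by the extra compatibility condition \eqref{dinat-module2} imposed by the $\Rep(G/N)$-prebalancing. Proposition \ref{et-1}(3) gives
$$\fpd(\Ac_{\Rep(G/N),\ca}) = \frac{\fpd(\ca\rtimes G)}{\fpd(\Rep(G/N))} = \frac{|G|\fpd(\ca)}{|G/N|} = |N|\fpd(\ca),$$
which matches $\sum_{n\in N}\fpd(\Ac_n)=\fpd(\Ac(N,\ca))$.

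It then suffices to show that the direct summand $\Ac(N,\ca)\subseteq \bigoplus_{g\in G}[\Ac_g,g]$ lands inside $\Ac_{\Rep(G/N),\ca}$; equivalently, that the dinaturals $\pi^n_Y$ for $n\in N$, extended by zero outside $N$, satisfy \eqref{dinat-module2} for every $V\in\Rep(G/N)$. For components $g\notin N$ both sides of \eqref{dinat-module2} vanish trivially. For $g=n\in N$, the left adjoint of $L_V$ is $L_{V^*}$ (using dualizability of $V\blacktriangleright\uno$ in $\ca$ inherited from that of $V$ in $\vect_\ku$). Unpacking the formula \eqref{xi-definition} for $\xi^{L_V}$ together with the explicit module structure of $L_V$ described in the definition of the inclusion $\Rep(G/N)\hookrightarrow(\ca\rtimes G)^*_\ca$, the prebalancing in the $n$-component reduces, under the natural identification of $\uhom(Y, L_V(Z))_n$ with $Z\ot(V\blacktriangleright\uno)\ot F_n(Y)^*$, to the isomorphism induced by $s_n:V\blacktriangleright\uno\to F_n(V\blacktriangleright\uno)$ that encodes the action of $n$ on $V$. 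Since $V\in\Rep(G/N)$, the element $n\in N$ acts trivially on $V$, so $s_n$ reduces to the structural coherence map coming from $F_n(\uno)\simeq\uno$, and condition \eqref{dinat-module2} collapses to a dinaturality equation already satisfied by $\pi^n_Y$ with respect to the functors $F_n$.

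The main obstacle is the verification in the last paragraph: aligning all the structural identifications (the monoidal constraint $\xi^n$ of $F_n$, the module structures of $L_V$ and $L_{V^*}$, the counit $\ev^{L_V}$, and the formula \eqref{xi-definition}) to confirm that the triviality of the $G/N$-action on $V$ at elements $n\in N$ turns the balancing constraint into a tautology. Once this inclusion $\Ac(N,\ca)\hookrightarrow \Ac_{\Rep(G/N),\ca}$ is in hand, the matching Frobenius--Perron dimensions force equality and yield the claimed isomorphism.
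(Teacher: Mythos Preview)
Your strategy is genuinely different from the paper's. The paper works directly with the universal property: it writes the prebalancing in the $g$-component as a composite $\beta^{V,g}_{M,N}=t_g\circ\gamma^{V,g}_{M,N}$, where $\gamma^{V,g}$ is the canonical $\vect_\ku$-prebalancing of Proposition~\ref{add-g} and $t_g$ is built from the action $s_g$ of $g$ on $V^*$. Since every dinatural transformation automatically satisfies \eqref{dinat-module} with respect to $\gamma$ (Proposition~\ref{add-g}(ii)), the paper concludes that a dinatural $\lambda=\bigoplus_g\lambda^g$ satisfies the $\Rep(G/N)$-balancing if and only if $\lambda^g=0$ for every $g\notin N$; this characterizes the relative end on the nose as $\Ac(N,\ca)$ without any dimension count. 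Your verification that the dinaturals supported on $N$ satisfy the balancing is exactly the ``if'' half of this statement, and your reasoning there is correct once phrased precisely: for $n\in N$ the action on $V\in\Rep(G/N)$ is trivial, so $t_n$ is the identity and $\beta^{V,n}=\gamma^{V,n}$, reducing the condition to one that holds automatically.

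The gap in your argument is the dimension step. You assert that $\fpd(\Ac(N,\ca))=\sum_{n\in N}\fpd(\Ac_n)$ matches $|N|\fpd(\ca)$, which requires $\fpd(\Ac_n)=\fpd(\ca)$ for each $n\in N$. In the fusion case this is immediate from $\Ac_g\simeq\bigoplus_i X_i\ot F_g(X_i)^*$ and the fact that tensor autoequivalences preserve FP dimensions of simples; in the general finite setting it is true but not obvious, and you give no justification. The paper's route sidesteps this entirely by proving the ``only if'' direction directly: for $g\notin N$ one can choose $V\in\Rep(G/N)$ on which $\bar g\in G/N$ acts nontrivially, so that $t_g\neq\id$ and the balancing constraint becomes a genuine condition forcing $\lambda^g=0$. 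This is both shorter and avoids invoking Proposition~\ref{et-1}(3), which in turn rests on the full machinery of the paper. If you want to keep your dimension argument, you must supply an independent proof that $\fpd(\Ac_g)=\fpd(\ca)$ for all $g\in G$.
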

\pf  Let us, explicitly describe, all the necessary ingredients to understand equation \eqref{dinat-module} in this particular example.

 First of all, one can easily check that the internal hom of $\ca$ as a $\ca\rtimes G$-module category is
\begin{equation}\label{pr-h} \uhom(X,Y)=\oplus_{g\in G} [Y\ot F_g(X)^*, g],
\end{equation}
for any $X, Y\in \ca$. For any $V\in\Rep(G/N)$ let us denote by 
$$\beta^V_{M,N}: \uhom(M, N\ot V\blacktriangleright \uno) \to \uhom(M\ot V^*\blacktriangleright \uno, N)$$
the $\Rep(G/N)$-prebalancing 
of the functor 
$$\uhom(-,-): \ca^{\op}\times \ca\to  \ca\rtimes G.$$ 
Here, we are using that the left adjoint to $L_V$ is $L_{V^*}$. Observe that, using \eqref{pr-h}, the prebalancing $\beta^V_{M,N}=\oplus_{g\in G}  \beta^{V,g}_{M,N}$, where
$$\beta^{V,g}_{M,N}:  [N\ot V\blacktriangleright \uno\ot F_g(M)^*, g] \to [N\ot F_g(M\ot V^*\blacktriangleright \uno)^*, g].$$

According to Lemma \ref{inthomfunct-preb} and Lemma \ref{int-hom-adjoint}, one can compute $\beta^{V,g}_{M,N}$ as the composition:
$$N\ot V\blacktriangleright \uno\ot F_g(M)^* \xrightarrow{ \gamma^{V,g}_{M,N}} N\ot F_g(M\ot V^*\blacktriangleright \uno)^* \xrightarrow{t_g} N\ot F_g(M\ot V^*\blacktriangleright \uno)^* $$
Here $\gamma^{V,g}_{M,N}$ is the morphism presented in Proposition \ref{add-g} associated with the additivity of the functor $ \uhom(-,-)$, and $t_g=\id_N\ot F_g(\id_M\ot s_g)$, where $s_g$ is the action of $g$ on $V^*$.

Let $E$ be an object in $\ca\rtimes G$ equipped with dinatural transformations 
$$\lambda_X=\oplus_{g\in G}  \lambda^g_X:E\to \oplus_{g\in G} [X\ot F_g(X)^*, g].$$
The dinatural maps $\lambda$  satisfies equation  \eqref{dinat-module2} if and only if $\lambda^g_X=0$ for any  $g\notin N$.
\epf 

\subsection{Case $\Do\hookrightarrow \ca$ is a tensor subcategory} 

If $\Mo=\ca$ with the regular action, then $\ca^*_\Mo=\ca^{\rev}$. Hence  any tensor subcategory $\Do\subseteq \ca$ produces a tensor subcategory $\Do^{\rev}\subseteq \ca^*_\ca$. In this particular case the funtor $S$ is
\begin{equation}\label{S11} S:\ca^{\op}\times \ca\to \ca,
\end{equation}
$$S(X,Y)=Y\ot X^*.$$
The action of $\Do^{\rev}$ on $\ca$ is given by
$$D\triangleright X=X\ot D,  $$
for any $D\in \Do$, $X\in \ca$. The prebalancing of the functor $S$ is given by
\begin{equation}\label{S12} \beta^D_{M,N}:(N\ot D)\ot M^*\to N\ot (M\ot {}^*D)^*,\end{equation}
$$\beta^D_{M,N}=(\id_N\ot \phi^{-1}_{M, {}^*D}) a_{N,D,M^*}. $$
Here $a$ is the associativity constraint, and $\phi_{X,Y}:(X\ot Y)^*\to Y^*\ot X^*$ is the canonical isomorphism. Note that the right dual in $\Do^{\rev}$ is the left dual in $\Do$. A dinatural transformation $\pi$ satisfies equation \eqref{dinat-module} if and only if
$$\pi_{  X\triangleright M} =(\beta^X_{X\triangleright M,M} )^{-1} S(m^{-1}_{X^*,X,M},  \id_M) S(\ev_X\triangleright \id_M,  \id_M) \pi_M$$
for any $X\in \Do$, $M\in \ca$. And, in this particular example it satisfies \eqref{dinat-module}  if and only if
\begin{align}\label{dinat-case0}\begin{split} \pi_{ M\ot X} &=a^{-1}_{M,X,(M\ot X)^*}  (\id_M\ot \phi_{M\ot X, {}^*X}  ( (\id_M\ot ev_X)a_{M,X,{}^*X})^*) \pi_M
\end{split}
\end{align}
\begin{rmk} Equation \eqref{dinat-case0} implies that the dinatural transformation $\pi$ depends only on objects that belong to a ``quotient"  $\ca/\Do$. This observation will be used in subsection \ref{subs:deligne}, in the example when $\ca$ equals the Deligne tensor product of two tensor categories $\Do\boxtimes \Bc$.
\end{rmk}
Note that this is equivalent to
$$a_{M,X,(M\ot X)^*} \pi_{ M\ot X}= (\id_M\ot \phi_{M\ot X, {}^*X}  ( (\id_M\ot ev_X)a_{M,X,{}^*X})^*) \pi_M.  $$
Applying to both sides of this equality $\id_{M\ot X}\ot \phi_{M,X}$ one gets
\begin{align}\label{dinat-case1}(\id_{M\ot X}\ot \phi_{M,X})  a_{M,X,(M\ot X)^*} \pi_{ M\ot X}= (\id_M\ot coev_X\ot \id_{M^*})\pi_M
\end{align}
We then produce an \'etale algebra  $$\Ac(\Do):=\Ac_{\Do^{\rev}, \ca}=\oint_{X\in \Rep(H)} (S,\beta).$$ This object is a subalgebra of the usual adjoint algebra $\Ac_\ca$. Not to be confused with the adjoint algebra $\Ac_\Do$ of the tensor category $\Do$.

In this Section we shall focus our attention to these particular subalgebras measuring the inclusion $\Do\hookrightarrow \ca.$ First, let us show that there is some sort of exact sequence. This will be more clear when computing some particular examples.

For any $X\in \ca$,  $Y\in \Do$ let $\lambda_Y:\Ac_\Do\to Y\ot Y^*$,  $\pi_X:\Ac_\ca\to X\ot X^*$  be the dinatural transformations associated with these ends. Also,   $\alpha_X:\Ac(\Do) \to X\ot X^*$ denote  the dinatural transformations that satisfy \eqref{dinat-module}. 
\begin{prop}\label{exact-seq-adj} Let $\Do\subseteq \ca$ be a tensor subcategory. There are algebra morphisms 
$$\Ac(\Do) \xrightarrow{ \iota}  \Ac_\ca \xrightarrow{ q}  \Ac_\Do,$$
such that $q\circ \iota=u\circ \alpha_{\uno} $.
\end{prop}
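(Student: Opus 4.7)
The plan is to construct $\iota$ and $q$ separately via appropriate universal properties, verify each is an algebra morphism from the characterization of multiplication and unit via dinatural transformations (Proposition \ref{alg-braid-adj}, equations \eqref{braid-din}–\eqref{mult-din1}), and then deduce the equality $q\circ\iota=u\circ\alpha_{\uno}$ by uniqueness from the universal property of $\Ac_\Do$.

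For $\iota$, I would observe that $\vect_\ku\subseteq \Do^{\rev}\subseteq \ca^*_\ca$ as tensor subcategories, so Proposition \ref{et-1}(2) yields an algebra inclusion $\Ac_{\Do^{\rev},\ca}=\Ac(\Do)\hookrightarrow \Ac_{\vect_\ku,\ca}=\Ac_\ca$. Equivalently, the dinatural family $\alpha_X:\Ac(\Do)\xrightarrow{..}X\ot X^*$ (for all $X\in\ca$) is in particular a dinatural family without any balancing condition, so the universal property of the usual end $\Ac_\ca$ produces a unique morphism $\iota$ with $\pi_X\circ\iota=\alpha_X$ for every $X\in\ca$. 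That $\iota$ is an algebra morphism follows by composing with $\pi_M$ and using that both products on $\Ac(\Do)$ and $\Ac_\ca$ are characterized by the same formula $\comp^{\ca}_M\circ(\pi_M\ot\pi_M)$ in \eqref{mult-din1}.

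For $q$, the inclusion $\Do\subseteq\ca$ lets me restrict the dinatural family $\pi_Y:\Ac_\ca\xrightarrow{..}Y\ot Y^*$ to objects $Y\in\Do$. Since the internal Hom and duals in $\Do$ coincide with those of $\ca$ on $\Do$, the universal property of $\Ac_\Do=\int_{Y\in\Do} Y\ot Y^*$ yields a unique morphism $q:\Ac_\ca\to\Ac_\Do$ characterized by $\lambda_Y\circ q=\pi_Y$ for every $Y\in\Do$. The same characterization argument as in step 1 (together with $\comp^{\ca}_Y=\comp^{\Do}_Y$ and $\coev^{\ca}_Y=\coev^{\Do}_Y$ for $Y\in\Do$) shows $q$ is an algebra morphism.

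To identify $q\circ\iota$ with $u\circ\alpha_{\uno}$, I would apply $\lambda_Y$ for $Y\in\Do$ to both sides and use the universal property of $\Ac_\Do$ to reduce to proving
\begin{equation*}
\alpha_Y=\coev_Y\circ \alpha_{\uno},
\end{equation*}
which says $\lambda_Y\circ q\circ\iota=\pi_Y\circ\iota=\alpha_Y$ on one side and $\lambda_Y\circ u\circ\alpha_\uno=\coev^{\Do}_Y\circ\alpha_\uno=\coev_Y\circ\alpha_\uno$ on the other (by \eqref{mult-din1} for the unit of $\Ac_\Do$). This equality is precisely equation \eqref{dinat-case1} specialized at $M=\uno$, after identifying $\uno\ot X\simeq X$ and $\uno\ot\uno^*\simeq\uno$ via the unitors. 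The main obstacle of the argument is this last verification: one must carefully track the associator $a_{\uno,X,(\uno\ot X)^*}$ and the canonical isomorphism $\phi_{\uno,X}$ appearing on the left of \eqref{dinat-case1} to confirm they collapse to the identity under the unit identifications, leaving exactly $\coev_X$ on the right as desired.
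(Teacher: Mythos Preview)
Your argument is correct and follows essentially the same route as the paper: construct $\iota$ via Proposition~\ref{restriction-sub} (equivalently Proposition~\ref{et-1}(2)) and $q$ via the universal property of the end $\Ac_\Do$, check both are algebra maps using the characterization \eqref{mult-din1}, and then reduce $q\circ\iota=u\circ\alpha_\uno$ to the identity $\alpha_Y=\coev_Y\circ\alpha_\uno$ for $Y\in\Do$, which is exactly \eqref{dinat-case1} at $M=\uno$. The only cosmetic difference is that the paper states this last step without dwelling on the unitor bookkeeping you flag as the main obstacle.
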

\pf  Since $\Do$ is a subcategory of $\ca$, by the universal property of the end, there exists a map
$$q:\Ac_\ca \to \Ac_\Do$$
such that $\lambda_Y q=\pi_Y$ for any $Y\in \Do$. By the definition of the product of the adjoint algebras given in Section \ref{Section:adjoints}, it follows that $q$ is an algebra map. It follows from Proposition \ref{restriction-sub} that, there exists a map $\iota: \Ac(\Do) \to \Ac_\ca$ such that $\pi_X\iota=\alpha_X $ for any $X\in \ca$. Using Proposition \ref{alg-braid-adj} it follows that $\iota$ is an algebra map. 

Let $Y\in \Do$. Then, it follows from the definition of the unit of $\Ac_\Do$ that
$$\lambda_Y\circ  u\circ \alpha_{\uno} =\coev_Y \circ \alpha_{\uno}. $$
Also
$$\lambda_Y\circ  q \circ \iota=\pi_Y \circ \iota= \alpha_Y.$$
Since $\alpha$ satisfies \eqref{dinat-case1}, it follows that $\alpha_Y= \coev_Y \circ \alpha_{\uno}, $ for any $Y\in \Do$. Whence $q\circ \iota=u\circ \alpha_{\uno} $.
\epf

\subsection{Case $\ca=\Do\boxtimes \Bc$}\label{subs:deligne} Let $\Do$, $\Bc$ be tensor categories. The Deligne tensor product $\ca=\Do\boxtimes \Bc$ has structure of tensor category. The tensor category $\Do$ can be seen as a tensor subcategory of $\ca$ in a canonical way 
$$ \Do \hookrightarrow\Do\boxtimes \Bc, $$
$$X\mapsto X\boxtimes \uno.$$ 
In the next Proposition  we shall make use of the canonical equivalence $$Z(\Do\boxtimes \Bc)\simeq Z(\Do)\boxtimes Z(\Bc).$$
Thus, identifying $ Z(\Bc)$ with the tensor subcategory $\uno \boxtimes Z(\Bc)\hookrightarrow Z(\Do)\boxtimes Z(\Bc)=Z(\ca).$
\begin{prop} There exists an isomorphism of algebras $\Ac(\Do)\simeq \Ac_\Bc$.
\end{prop}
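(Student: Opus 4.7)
The plan is to apply the universal property of $\Ac(\Do)$ to construct a morphism $h:\uno\boxtimes \Ac_\Bc\to \Ac(\Do)$, and then exhibit an inverse using the universal property of the ordinary end $\Ac_\Bc=\int_{B\in\Bc} B\otimes B^*$. For each object $D\boxtimes B$ of $\ca=\Do\boxtimes\Bc$, I would set
$$\alpha_{D\boxtimes B}:=\coev_D\boxtimes \tau_B:\uno\boxtimes \Ac_\Bc\longrightarrow (D\otimes D^*)\boxtimes (B\otimes B^*)\simeq (D\boxtimes B)\otimes (D\boxtimes B)^*,$$
where $\tau_B:\Ac_\Bc\to B\otimes B^*$ is the dinatural transformation of the usual end defining $\Ac_\Bc$ in $\Bc$. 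Dinaturality of $\alpha$ in $D\boxtimes B$ follows from naturality of $\coev$ in $D$ and dinaturality of $\tau$ in $B$, and the universal property of the Deligne tensor product extends this to all morphisms in $\ca$.

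The key step is the verification of the $\Do$-prebalancing condition \eqref{dinat-case1}. For $X=Y\boxtimes \uno\in\Do\subseteq\ca$ and $M=D\boxtimes B$, both sides of \eqref{dinat-case1} split along $\boxtimes$: the $\Bc$-component on each side is simply $\tau_B$, while the $\Do$-component reduces to the classical identity
$$(\id_D\otimes \coev_Y\otimes \id_{D^*})\,\coev_D=\coev_{D\otimes Y}$$
(modulo the associator and the canonical isomorphism $\phi$ between duals of tensor products) that holds in any rigid monoidal category. By the universal property of the relative end, this produces a unique morphism $h:\uno\boxtimes\Ac_\Bc\to \Ac(\Do)$ with $\pi_{D\boxtimes B}\,h=\alpha_{D\boxtimes B}$. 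In the reverse direction, the family $\{\pi_{\uno\boxtimes B}\}_{B\in\Bc}$ is a dinatural transformation in $B$ for the functor $B\mapsto \uno\boxtimes(B\otimes B^*)$, so the universal property of the ordinary end $\Ac_\Bc$ yields a unique $g:\Ac(\Do)\to \uno\boxtimes\Ac_\Bc$ with $(\id_\uno\boxtimes \tau_B)\,g=\pi_{\uno\boxtimes B}$.

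It then remains to check that $h$ and $g$ are mutually inverse and that $h$ respects algebra and half-braiding data. Both $hg=\id$ and $gh=\id$ follow from the uniqueness clauses of the two universal properties; the non-trivial direction is $hg=\id_{\Ac(\Do)}$, where one applies \eqref{dinat-case1} with $M=\uno\boxtimes B$ and $X=D\boxtimes\uno$ to express $\pi_{D\boxtimes B}$ in terms of $\pi_{\uno\boxtimes B}$ and $\coev_D$, which is exactly the identity needed to conclude $\pi_{D\boxtimes B}\,hg=\pi_{D\boxtimes B}$ for all $D\boxtimes B$. Compatibility with the algebra structure and the half-braiding then follows from Proposition \ref{alg-braid-adj}, since the defining formulas \eqref{braid-din}--\eqref{mult-din1} are themselves characterized dinaturally and are transported by $h$ to the analogous dinatural characterizations of $\Ac_\Bc$. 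I expect the main obstacle to be the coherence bookkeeping in the balancing step: tracking the associators together with the canonical isomorphisms $\phi$ when substituting $\alpha=\coev\boxtimes\tau$ into \eqref{dinat-case1} is routine but tedious. As an independent consistency check, Proposition \ref{et-1} gives $\fpd(\Ac(\Do))=\fpd(\ca)/\fpd(\Do)=\fpd(\Bc)=\fpd(\Ac_\Bc)$, matching the asserted isomorphism.
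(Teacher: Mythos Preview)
Your proposal is correct and follows essentially the same strategy as the paper. The paper phrases the argument as directly verifying that $\uno\boxtimes\Ac_\Bc$ equipped with the extended dinatural family $\widehat{\pi}_{X\boxtimes B}$ (defined precisely so that \eqref{dinat-case0} holds, starting from $\widehat{\pi}_{\uno\boxtimes B}=\pi_B$) satisfies the universal property of the relative end, whereas you package the same content as constructing mutually inverse maps via the two universal properties; your family $\alpha_{D\boxtimes B}=\coev_D\boxtimes\tau_B$ is exactly the paper's $\widehat{\pi}_{D\boxtimes B}$ after unwinding \eqref{dinat-case1} in the strict case. Your treatment is slightly more explicit in checking that $h$ and $g$ are inverse and in addressing the algebra/half-braiding compatibility (which the paper leaves implicit), while the paper is marginally more careful in noting that any compatible dinatural family $\lambda$ is determined by its restriction to objects $\uno\boxtimes B$, which is the substance behind your claim that $hg=\id$.
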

\pf To simplify calculations, we shall assume that both tensor categories are strict. Let $\pi_B:\Ac_\Bc\to B\ot B^*$, $B\in \Bc$, be the dinatural transformations associated with $\Ac_\Bc$. For any $B\in \Bc$, $X\in \Do$ define
$$\widehat{\pi}_{ X\boxtimes B} = (\id_{\uno \boxtimes B}\ot \phi_{X\boxtimes B, {}^*X \boxtimes \uno}  ( \id_{\uno \boxtimes B}\ot ev_X)^*) \pi_B.$$
Note that this definition of $\widehat{\pi}$ is given by using \eqref{dinat-case0}, with trivial associativities. One can prove that $\widehat{\pi}_{ X\boxtimes B}:\Ac_\Bc\to  X\boxtimes B \ot ( X\boxtimes B)^*$ defines a dinatural transformation lifting the dinatural transformation $\pi$ and it satisfies equation \eqref{dinat-case0}. Let be $E\in \Do\boxtimes \Bc$  an object, and $\lambda_A:E\to A\ot A^*$, $A\in \ca$, be a dinatural transformation such that it satisfies \eqref{dinat-case0}. It follows that $\lambda$ is determined by its restriction on elements of the form $\uno \boxtimes B$, $B\in \Bc$, thus defining a dinatural transformation
$$\lambda_{\uno \boxtimes B}:E\to B\ot B^*.$$ Hence, there exists a map $h:E\to \Ac_\Bc$ such that $\pi_B\circ h= \lambda_{\uno \boxtimes B}$. Since both, $\widehat{\pi}$ and $\lambda$ are determined by its values in elements of the form $\uno \boxtimes B$, then $\widehat{\pi}\circ h= \lambda$, proving that $\Ac(\Do)\simeq \Ac_\Bc$.
\epf

\begin{question} If $\Do\to \ca\to \Bc\boxtimes \End(\Mo)$ is an exact sequence of tensor categories relative to a module category $\Mo$, see \cite{EG}, is there a relation between $\Ac(\Do)$ and $\Ac_\Bc$ ?
\end{question}

\subsection{Case when $\ca=\Rep(H)$, $H$ a Hopf algebra}
 
Observe first that $\ca^{\rev}=\Rep(H^{\cop}). $ Tensor subcategories of categories of representations of Hopf algebras are in correspondence with  Hopf algebra quotients. Let $\pi: H\to Q $ be a Hopf algebra projection. In this case, define $\Do=\Rep(Q)$. The inclusion $\Rep(Q)^{\rev} \hookrightarrow \Rep(H)^{\rev}$ is given via $\pi$.
\begin{prop} There exists an isomorphism
$$\Ac(\Do)\simeq \{h\in H: h\_1\ot \pi(h\_2)=h\ot 1\}=H^{co\, \pi}. $$
\end{prop}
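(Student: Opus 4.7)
The plan is to make the dinatural constraint \eqref{dinat-case1} fully explicit in the case $\ca=\Rep(H)$, and then read off the coinvariant condition defining $H^{\mathrm{co}\,\pi}$. I assume throughout that $H$ is finite-dimensional, in accordance with the standing hypotheses of the paper. Recall that for $\ca=\Rep(H)$ the ordinary adjoint algebra is canonically $\Ac_\ca\simeq H$, where $H$ carries the adjoint action $g\cdot h = g_{(1)} h S(g_{(2)})$, and the dinatural morphism $\pi_X:H\to X\ot X^*$ corresponds, under the canonical isomorphism $X\ot X^*\simeq \End(X)$, to the representation map $\rho_X:H\to \End(X)$. By Proposition \ref{exact-seq-adj}, $\Ac(\Do)$ embeds into $\Ac_\ca\simeq H$ via a morphism $\iota$, so the task reduces to identifying which $h\in H$ lift to a dinatural family satisfying the $\Rep(Q)$-prebalancing.

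Next I translate \eqref{dinat-case1}. Under $X\ot X^*\simeq \End(X)$, the right-hand side $(\id_M\ot \coev_X\ot \id_{M^*})\pi_M(h)$ corresponds to $\rho_M(h)\ot \id_X\in \End(M\ot X)$, while $\pi_{M\ot X}(h)$ corresponds to $\rho_{M\ot X}(h)=\rho_M(h_{(1)})\ot \rho_X(h_{(2)})$. Since $X\in\Rep(Q)$, the right tensor factor depends only on $\pi(h_{(2)})\in Q$, so \eqref{dinat-case1} for this $\ca$ and $\Do$ reads
\begin{equation*}
\rho_M(h_{(1)})\ot \rho_X(\pi(h_{(2)}))=\rho_M(h)\ot \id_X \qquad \text{in } \End(M\ot X),
\end{equation*}
for all $M\in\Rep(H)$ and $X\in\Rep(Q)$. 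Taking $M$ to be the left regular $H$-module, $X$ the left regular $Q$-module, and evaluating at $1_H\ot 1_Q$, one obtains the coinvariance identity $h_{(1)}\ot \pi(h_{(2)})=h\ot 1_Q$ in $H\ot Q$, so the image of $\iota$ is contained in $H^{\mathrm{co}\,\pi}$. The converse implication is immediate: applying $\rho_M\ot \rho_X$ to that identity directly recovers the displayed constraint for every $M$ and $X$.

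To conclude, a short Hopf-algebraic check (using the antipode identity together with the fact that $\pi$ is a Hopf map) shows that $H^{\mathrm{co}\,\pi}$ is closed under the adjoint action of $H$ and is a subalgebra of $H$, hence a genuine subobject of $\Ac_\ca$ in $\Rep(H)$. Equipping $H^{\mathrm{co}\,\pi}$ with the dinatural transformations $h\mapsto \rho_X(h)$, the universal property of the relative end, together with the embedding $\iota$ produced by Proposition \ref{exact-seq-adj}, yields the desired algebra isomorphism $\Ac(\Do)\simeq H^{\mathrm{co}\,\pi}$. The main obstacle is the bookkeeping required to translate the associator- and duality-laden equation \eqref{dinat-case1} into a concrete identity in $H\ot Q$; once this is done, the Hopf-algebraic coinvariance condition matches the constraint exactly, and no further computation is needed.
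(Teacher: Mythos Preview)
Your argument is correct and follows essentially the same route as the paper: both translate the constraint \eqref{dinat-case1} under the identification $X\ot X^*\simeq \End(X)$ into the identity $\rho_M(h_{(1)})\ot\rho_X(\pi(h_{(2)}))=\rho_M(h)\ot\id_X$, and then test it on the regular modules to extract the coinvariance condition $h_{(1)}\ot\pi(h_{(2)})=h\ot 1$. The only organizational difference is that you invoke the embedding $\iota:\Ac(\Do)\hookrightarrow \Ac_\ca\simeq H$ from Proposition \ref{exact-seq-adj} and identify its image, whereas the paper verifies the universal property of the relative end for $H^{\mathrm{co}\,\pi}$ directly by constructing the factorization $\beta:E\to H^{\mathrm{co}\,\pi}$ from an arbitrary dinatural cone $(E,\alpha)$; the computations are the same in substance.
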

\pf
For any $M\in \Rep(H)$ let us define 
$$\lambda_M:H^{co\, \pi}\to M\otk M^*, $$
$$\lambda_M(h)=\sum_i h_i\otk h^i,$$
where $\sum_i  h^i(m)\, h_i=h\cdot m$, for any $h\in (H^{\cop})^{co\, \pi}$, $m\in M$.

\begin{claim} $\lambda$ is a dinatural transformation, and it satisfy \eqref{dinat-case1}.
\end{claim}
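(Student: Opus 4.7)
The plan is to reinterpret $\lambda_M(h)\in M\otimes M^*$ as the tensor corresponding to the endomorphism $\rho_M(h)\colon m\mapsto h\cdot m$ under the canonical isomorphism $\End_\ku(M)\simeq M\otimes M^*$; the defining identity $\sum_i h^i(m)\,h_i=h\cdot m$ is just the evaluation of this tensor at $m$, so that in dual bases $\{m_i\},\{m^i\}$ of $M,M^*$ one has $\lambda_M(h)=\sum_i(h\cdot m_i)\otimes m^i$. Everything then reduces to properties of the representation map $\rho_M\colon H\to \End_\ku(M)$. To make $\lambda_M$ a morphism in $\Rep(H)$, I would equip $H^{co\,\pi}$ with the restricted adjoint action $k\cdot h=k_{(1)}hS(k_{(2)})$, which preserves $H^{co\,\pi}$ since $\pi$ is a Hopf map and which $\rho_M$ intertwines with the conjugation $H$-action on $\End_\ku(M)$, i.e.\ the diagonal action on $M\otimes M^*$.

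For dinaturality, given $f\colon M\to N$ in $\Rep(H)$, I would identify both $(f\otimes\id_{M^*})\lambda_M(h)$ and $(\id_N\otimes f^*)\lambda_N(h)$ with linear maps $M\to N$ via $\Hom_\ku(M,N)\simeq N\otimes M^*$. The first becomes $m\mapsto f(h\cdot m)$ and the second $m\mapsto h\cdot f(m)$, and these coincide by the $H$-linearity of $f$. This is essentially a one-line check.

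For equation \eqref{dinat-case1} with $X\in\Do=\Rep(Q)$ and $M\in\Rep(H)$, I would flatten both sides into $M\otimes X\otimes X^*\otimes M^*$; the associator $a_{M,X,(M\otimes X)^*}$ together with $\phi_{M,X}$ implement the canonical identification $(M\otimes X)^*\simeq X^*\otimes M^*$, with no non-trivial coherence data entering. Using $\Delta(h)=h_{(1)}\otimes h_{(2)}$, the left-hand side rewrites as
\[\sum_{i,k}(h_{(1)}\cdot m_i)\otimes(h_{(2)}\cdot x_k)\otimes x_k^*\otimes m_i^*,\]
where $\{x_k\},\{x_k^*\}$ are dual bases of $X,X^*$. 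Since the action on $X\in\Rep(Q)$ factors through $\pi$, we have $h_{(2)}\cdot x_k=\pi(h_{(2)})\cdot x_k$, and then the defining hypothesis $h_{(1)}\otimes\pi(h_{(2)})=h\otimes 1$ collapses the $X$-factor to the identity operator; the expression becomes $\sum_{i,k}(h\cdot m_i)\otimes x_k\otimes x_k^*\otimes m_i^*$, which is exactly $(\id_M\otimes\coev_X\otimes\id_{M^*})\lambda_M(h)$.

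The only real obstacle is bookkeeping: tracking which Sweedler factor of $\Delta(h)$ acts on which tensor slot and verifying that the canonical flattenings used above truly agree with $a_{M,X,(M\otimes X)^*}$ and $\id_{M\otimes X}\otimes\phi_{M,X}$ as stated. Once this is in place the single relation $h_{(1)}\otimes\pi(h_{(2)})=h\otimes 1$ defining $H^{co\,\pi}$ does all the work, and no further Hopf-algebraic input is required.
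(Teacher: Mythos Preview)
Your proposal is correct and follows essentially the same approach as the paper: both arguments reduce the verification of \eqref{dinat-case1} to the single Hopf-algebraic identity $h_{(1)}\otimes\pi(h_{(2)})=h\otimes 1$ defining $H^{co\,\pi}$, after identifying $\lambda_M(h)$ with the action operator $\rho_M(h)$. The paper phrases the computation by evaluating the rightmost tensorands of both sides at a test element $y\otimes m$, whereas you expand everything in dual bases of $M$ and $X$; these are the same calculation, and your additional remark about $H$-linearity of $\lambda_M$ under the adjoint action (which the paper leaves implicit) is a welcome clarification.
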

\pf[Proof of Claim] Dinaturality of $\lambda$ follows easily. Let $M\in \Rep(H)$, $X\in \Rep(Q)$, $m\in M$, $y\in X$, then the right hand side of  \eqref{dinat-case1} evaluated in $h\in H^{co\, \pi}$ equals
\begin{align*} (\id_M\ot coev_X\ot \id_{M^*})\lambda_M(h)= \sum_{i,j} h_i\ot x_j\ot x^j\ot h^i.
\end{align*}
Here $\{x_i\}, \{x^i\}$ are dual basis if $X$ and $X^*$. When evaluating the right most tensorands in $y\ot m$ of the above equality it gives
$$\sum_{i,j} h_i\ot x_j\ot x^j(y)  h^i(m)=  h\cdot m \ot y.$$
The left hand side of  \eqref{dinat-case1} evaluated in $h\in H^{co\, \pi}$ equals
$$\lambda_{M\ot X}(h)= \sum_i \tilde{h}_i\otk \tilde{h}^i.$$
Evaluating the right tensorand in $m\ot y$ it gives
$$\sum_i \tilde{h}_i\otk \tilde{h}^i(m\ot y)= h\cdot (m\ot y)=h\_1\cdot m\ot \pi(h\_2)\cdot y.$$
Since $h\in H^{co\, \pi}$ it follows that both sides are equal.
\epf
Now, let be $E\in \Rep(H)$ an object equipped with a dinatural transformation $\alpha_M:E\to M\otk M^*$ such that it satisfies \eqref{dinat-case1}. If we set $\alpha_H(e)=\sum_i e_i\ot e^i$,  define
$$\beta:E\to H, \quad \beta(e)=\sum_i e^i(1) e_i.$$
Since $\alpha$ satisfies \eqref{dinat-case1}, it follows that $\beta(E)\subseteq H^{co\, \pi}$, and dinaturality of $\alpha$ implies that $\alpha= \lambda\circ \beta$, proving that $\Ac(\Do)\simeq H^{co\, \pi}$.
\epf

\section{Apendix}

Let $\Mo$ be an  exact indecomposable  left $\ca$-module category, and let $\Do$ be a tensor subcategory of $\ca^*_\Mo$. For any $X\in \ca$ and $(F,\gamma)\in Z_\Do(\End(\Mo))$ the functor
$$ \Hom_{\Mo}(X\triangleright -,F(-)) :\Mo^{\op}\times \Mo \to vect_\ku,$$
has  $\Do$-prebalancing given by
\begin{align}\begin{split}\label{prebalancing-hom11} &\beta^X_{D,M,N}: \Hom_{\Mo}(X\triangleright M,F(D(N))) \to \Hom_{\Mo}(X\triangleright D^{\la}(M),F(N)),\\
&\beta^X_{D,M,N}(f)=(ev_D)_{F(N)} D^{\la}( (\gamma_D)_N f) \tilde{d}^{-1}_{X,M},
\end{split}
\end{align}
for any $(D,d)\in \Do$. Here $ev_D:D^{\la}\circ D \to \Id$ is the evaluation of the adjunction $(D^{\la}, D)$, and $\tilde{d}_{X,M}: D^{\la}(X\triangleright M)\to X\triangleright D^{\la}( M)$ is the $\ca$-module structure of the functor $D^{\la}$. Also, the functor $$  \Hom_{\ca}(X,\uhom(-,F(-))) :\Mo^{\op}\times \Mo \to vect_\ku$$
 has a $\Do$-prebalancing given by
$$b^X_{D,M,N}: \Hom_{\ca}(X,\uhom(M,F(D(N)))) \to \Hom_{\ca}(X,\uhom(D^{\la}(M),F(N))), $$
$$ b^X_{D,M,N} (f)=\beta^D_{M,N} f. $$
Here $\beta^D_{M,N}$ is the $\Do$-prebalancing presented in Lemma \ref{inthomfunct-preb}. Explicitly
$$\beta^D_{M,N}=\xi^D_{M, F(N)} \uhom(\id_M, (\gamma_D)_N),$$
where $\xi^D$ is the natural isomorphism presented in Lemma \ref{int-hom-adjoint}.  The next result is the proof of Claim \ref{claim-thm1}. 
\begin{prop}\label{proofclaim1} Let  $$\phi^X_{M,N}:\Hom_{\ca}(X,\uhom(M,N))\to \Hom_{\Mo}(X\triangleright M,N),$$
 be the natural isomorphisms presented in \eqref{Hom-interno}.  We have that \begin{equation}\label{phi-commut-prebal}
 \beta^X_{D,M,N}  \phi^X_{M,F(D(N))}= \phi^X_{D^{\la}(M),F(N)}  b^X_{D,M,N},
 \end{equation}
  for any $X\in \ca$, $M, N\in \Mo$.
\end{prop}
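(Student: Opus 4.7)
The plan is to evaluate both sides of \eqref{phi-commut-prebal} on an arbitrary element $f \in \Hom_\ca(X, \uhom(M, F(D(N))))$ and unfold every structure map until the two expressions manifestly agree. Applying the explicit formula \eqref{prebalancing-hom11} to the morphism $\phi^X_{M, F(D(N))}(f) : X \triangleright M \to F(D(N))$, the left-hand side rewrites immediately as
$$ (\ev_D)_{F(N)} \circ D^{\la}\!\big((\gamma_D)_N \circ \phi^X_{M, F(D(N))}(f)\big) \circ \tilde d^{-1}_{X,M}. $$

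For the right-hand side, I would first use the explicit description \eqref{xi-definition} of $\xi^D_{M, F(N)}$. Setting $Y = \uhom(M, D(F(N)))$, the identity $\phi^Y_{M, D(F(N))}(\id) = \ev^\Mo_{M, D(F(N))}$ together with the formula $\alpha^{-1}_{A,B}(g) = (\ev_D)_B \circ D^{\la}(g)$ for the inverse of the adjunction isomorphism of $(D^{\la}, D)$ yields
$$ \phi^Y_{D^{\la}(M), F(N)}(\xi^D_{M, F(N)}) = (\ev_D)_{F(N)} \circ D^{\la}(\ev^\Mo_{M, D(F(N))}) \circ \tilde d^{-1}_{Y,M}. $$
Writing $h := \uhom(\id_M, (\gamma_D)_N) \circ f : X \to Y$, one application of the naturality diagram \eqref{comm-phi3} gives
$$ \phi^X_{D^{\la}(M), F(N)}(\xi^D_{M, F(N)} \circ h) = \phi^Y_{D^{\la}(M), F(N)}(\xi^D_{M, F(N)}) \circ (h \triangleright \id_{D^{\la}(M)}). $$

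Combining these two identities, I would commute $\tilde d^{-1}_{Y,M}$ past $(h \triangleright \id_{D^{\la}(M)})$ using the naturality of the $\ca$-module structure $\tilde d$ of $D^{\la}$, producing $D^{\la}(h \triangleright \id_M) \circ \tilde d^{-1}_{X,M}$. Then I would recognize $\ev^\Mo_{M, D(F(N))} \circ (h \triangleright \id_M) = \phi^X_{M, D(F(N))}(h)$ by a second appeal to \eqref{comm-phi3} (with $\beta = \id$). Finally, diagram \eqref{comm-phi2}, applied to the morphism $(\gamma_D)_N : F(D(N)) \to D(F(N))$, evaluates $\phi^X_{M, D(F(N))}(h) = (\gamma_D)_N \circ \phi^X_{M, F(D(N))}(f)$, and substituting back reproduces exactly the expression computed above for the left-hand side.

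The only real obstacle is notational bookkeeping: one must carefully track the indices of $\phi$ and $\psi$, of the module structure $\tilde d$, and of the adjunction isomorphism $\alpha$ for $(D^{\la}, D)$, and must avoid confusing the various evaluations in play (the counit $\ev_D$ of the adjunction, the internal evaluation $\ev^\Mo$, and the duality evaluation in $\ca$). Conceptually the proof rests only on the naturality of $\phi$ in all three of its arguments, together with the defining formula \eqref{xi-definition} for $\xi^D$, so nothing deeper than these standard facts is required.
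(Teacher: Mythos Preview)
Your proposal is correct and follows essentially the same approach as the paper: both arguments unfold the definition \eqref{xi-definition} of $\xi^D$ and then reduce everything to the naturality of $\phi$ in its three variables together with the naturality of the module structure $\tilde d$ of $D^{\la}$. The only cosmetic difference is that the paper first applies $\psi^X_{D^{\la}(M),F(N)}$ to both sides and keeps the adjunction isomorphism $\alpha^{-1}$ abstract until the very end, whereas you work directly on the $\phi$ side and immediately unpack $\alpha^{-1}_{A,B}(g) = (\ev_D)_B \circ D^{\la}(g)$; this makes your computation slightly more transparent, but the underlying logic is identical.
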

\begin{proof}  Take $f\in \Hom_{\ca}(X, \uhom(M,F(D(N))))$. Then, applying $ \psi^X_{D^{\la}(M),F(N)}$ on both sides of 
\eqref{phi-commut-prebal} and evaluating in $f$, one obtains that \eqref{phi-commut-prebal} is equivalent to
\begin{equation}\label{phi-commut-prebal2}
\psi^X_{D^{\la}(M),F(N)}\big( \beta^X_{D,M,N} ( \phi^X_{M,F(D(N))}(f))\big)=b^X_{D,M,N}(f).
 \end{equation}

The right hand side of \eqref{phi-commut-prebal2} is equal to
\begin{align*} &=\xi^D_{M, F(N)} \uhom(\id_M, (\gamma_D)_N) f\\
&=  \psi^{Y}_{D^{\la}(M),F(N)}\big(\alpha^{-1}_{Y\triangleright M, F(N)}(\phi^Y_{M,D(F(N))}(\id)) \tilde{d}^{-1}_{Y,M}\big) \uhom(\id_M, (\gamma_D)_{F(N)}) f\\
&= \psi^{Y}_{D^{\la}(M),F(N)}\big(  \alpha^{-1}_{Y\triangleright M, F(N)}(\phi^Y_{M,D(F(N))}(\id))\tilde{d}^{-1}_{Y,M} \\
&( \uhom(\id, (\gamma_D)_{F(N)}) f)\triangleright \id_{D^{\la}(M)} \big)\\
&=  \psi^{X}_{D^{\la}(M),F(N)}\big(\alpha^{-1}_{Y\triangleright M, F(N)}(\phi^Y_{M,D(F(N))}(\id))D^{\la}(\uhom(\id, (\gamma_D)_{F(N)}) f)\triangleright \id)\\  & \tilde{d}^{-1}_{X,M}\big) \\
&= \psi^{X}_{D^{\la}(M),F(N)}\big(\alpha^{-1}_{X\triangleright M, F(N)}(\phi^Y_{M,D(F(N))}(\id)(\uhom(\id, (\gamma_D)_{F(N)}) f)\triangleright \id)  \\  & \tilde{d}^{-1}_{X,M} \big)\\
&=\psi^{X}_{D^{\la}(M),F(N)}\big(\alpha^{-1}_{X\triangleright M, F(N)}(\phi^X_{M,D(F(N))}(\uhom(\id, (\gamma_D)_{F(N)}) f)) \tilde{d}^{-1}_{X,M} \big)
\end{align*}
The second equality follows from the definition of $\xi^D$ given in  \ref{xi-definition}. Here $Y= \uhom(M, D(F(N)))$, and $\tilde{d}$ is the module structure of the functor $ D^{\la}$. The third equality follows from \eqref{comm-psi3}. Here we have used isomorphisms $\alpha$, see Remark \ref{xi-definition}. The fourth equality follows from the naturality of $\tilde{d} $, the fifth one follows from the naturality of $\alpha$, the sixth equality follows from \eqref{comm-phi3}. Using the naturality of $\alpha$, one can see that the last expression is equal to the left hand side of \eqref{phi-commut-prebal2}. 
\end{proof}

\begin{prop}\label{proofclaim2} Let $Q\in \ca$, and $(\mu, \lambda):  L^Q\to K$ be a natural transformation  in $ Z_\Do(\End(\Mo))$.  Then, for any $(D,d)\in \Do$ we have
\begin{align}\label{mu-betas}\begin{split} \uhom((\ev_D)_M, &\id_{K(M)} ) \psi^Q_{M,K(M)}(\mu_M)=\\
&=\xi^D_{D(M),K(M)}\uhom(\id, (\lambda_D)_M) \psi^Q_{D(M),K(D(M))}(\mu_{D(M)}),
\end{split}
\end{align}

\end{prop}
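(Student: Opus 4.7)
The plan is to reduce the statement to an equality of morphisms in $\Hom_\Mo(Q\triangleright D^{\la}(D(M)),K(M))$ by applying the adjunction isomorphism $\phi^Q_{D^{\la}(D(M)),K(M)}$ to both sides of \eqref{mu-betas}, and then to trace through the defining formula \eqref{xi-definition} of $\xi^D$, the commutation relation characterizing $\mu$ as a morphism in $Z_\Do(\End(\Mo))$, and the adjoint compatibility \eqref{adjointp-mod-funct}. All other ingredients are standard naturality squares for $\phi$ and $\psi$.

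First I would compute the left hand side. Applying $\phi^Q_{D^{\la}(D(M)),K(M)}$ to $\uhom((\ev_D)_M,\id_{K(M)})\circ\psi^Q_{M,K(M)}(\mu_M)$ and using \eqref{comm-phi1} with $g=(\ev_D)_M$, I get
$$\phi^Q(\mathrm{LHS})=\mu_M\circ(\id_Q\triangleright(\ev_D)_M).$$
Next, for the right hand side, set $p=\uhom(\id,(\lambda_D)_M)\circ\psi^Q_{D(M),K(D(M))}(\mu_{D(M)})\in\Hom_\ca(Q,\uhom(D(M),D(K(M))))$. By \eqref{comm-phi2}, $\phi^Q_{D(M),D(K(M))}(p)=(\lambda_D)_M\circ\mu_{D(M)}$. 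Applying $\phi^Q$ to $\xi^D_{D(M),K(M)}\circ p$ and using the description of $\xi^D$ given in \eqref{xi-definition} (with $F=D$, $F^{\la}=D^{\la}$), I obtain
$$\phi^Q(\mathrm{RHS})=\alpha^{-1}_{Q\triangleright D(M),K(M)}\big((\lambda_D)_M\mu_{D(M)}\big)\circ\widetilde{d}^{-1}_{Q,D(M)},$$
where $\alpha$ is the adjunction isomorphism for $(D^{\la},D)$ and $\widetilde{d}$ is the module structure of $D^{\la}$.

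The next step invokes the hypothesis that $\mu$ is a morphism in $Z_\Do(\End(\Mo))$. Unwinding the definition of the half-braidings $l^Q_D$ (Remark \ref{left-action-functor}) and $\lambda_D$, the relation \eqref{relativecent2} gives $(\lambda_D)_M\mu_{D(M)}=D(\mu_M)\circ d^{-1}_{Q,M}$. Substituting this and using naturality of $\alpha^{-1}$ together with the triangle identity, one finds $\alpha^{-1}((\lambda_D)_M\mu_{D(M)})=\mu_M\circ(\ev_D)_{Q\triangleright M}\circ D^{\la}(d^{-1}_{Q,M})$. Hence
$$\phi^Q(\mathrm{RHS})=\mu_M\circ(\ev_D)_{Q\triangleright M}\circ D^{\la}(d^{-1}_{Q,M})\circ\widetilde{d}^{-1}_{Q,D(M)}.$$

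The final and decisive step is to recognize that the composition $(\ev_D)_{Q\triangleright M}\circ D^{\la}(d^{-1}_{Q,M})$ equals $\alpha^{-1}_{Q\triangleright D(M),Q\triangleright M}(d^{-1}_{Q,M})$, which, by \eqref{adjointp-mod-funct} applied to the module functor $D$ and its left adjoint $D^{\la}$, coincides with $(\id_Q\triangleright(\ev_D)_M)\circ\widetilde{d}_{Q,D(M)}$. Substituting this identity, the two copies of $\widetilde{d}_{Q,D(M)}^{\pm 1}$ cancel, yielding $\phi^Q(\mathrm{RHS})=\mu_M\circ(\id_Q\triangleright(\ev_D)_M)=\phi^Q(\mathrm{LHS})$. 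Since $\phi^Q$ is a bijection, the desired equality \eqref{mu-betas} follows. The main potential obstacle is the bookkeeping in the third paragraph: one has to very carefully identify the correct source/target objects when invoking the naturality of $\alpha$ and the triangle identity, and then match exactly the form needed to apply the pre-existing identity \eqref{adjointp-mod-funct} — but once this match is made, the cancellation is immediate.
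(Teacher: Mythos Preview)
Your proposal is correct and follows essentially the same approach as the paper's proof: both apply the adjunction isomorphism $\phi^Q_{D^{\la}(D(M)),K(M)}$ (equivalently $\psi^Q$) to reduce \eqref{mu-betas} to the equality $\mu_M(\id_Q\triangleright(\ev_D)_M)=\alpha^{-1}_{Q\triangleright D(M),K(M)}((\lambda_D)_M\mu_{D(M)})\,\widetilde{d}^{-1}_{Q,D(M)}$, then unwind $\xi^D$ via \eqref{xi-definition}, invoke the relation $(\lambda_D)_M\mu_{D(M)}=D(\mu_M)d^{-1}_{Q,M}$ coming from $\mu$ being a morphism in $Z_\Do(\End(\Mo))$, and finish with \eqref{adjointp-mod-funct}. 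The only cosmetic difference is that you expand $\alpha^{-1}$ as $(\ev_D)\circ D^{\la}(-)$ one step earlier than the paper does, which is harmless.
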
 
\pf It follows from \eqref{comm-psi2} that, the left hand side of \eqref{mu-betas} is equal to
$$\psi^Q_{D^{\la}(D(M)),K(M)}( \mu_M (\id_Q\triangleright (\ev_D)_M )).$$
It follows from \eqref{comm-psi1} that, the right hand side of \eqref{mu-betas} is equal to
$$\xi^D_{D(M),K(M)} \psi^Q_{D(M),D(K(M))}((\lambda_D)_M\mu_{D(M)}). $$
Hence, equation \eqref{mu-betas} is equivalent to
\begin{align}\label{mu-betas2} \begin{split}
\mu_M (&\id_Q\triangleright (\ev_D)_M )=\\& =\phi^Q_{D^{\la}(D(M)),K(M)}\big(\xi^D_{D(M),K(M)} \psi^Q_{D(M),D(K(M))}((\lambda_D)_M\mu_{D(M)}) \big) 
\end{split}
\end{align}
It follows from \eqref{comm-phi3} that, the right hand side of \eqref{mu-betas2} is equal to
\begin{align*} &=\phi^{Y}_{D^{\la}(D(M)),K(M)}\big(\xi^D_{D(M),K(M)}\big) \big(\psi^Q_{D(M),D(K(M))}((\lambda_D)_M\mu_{D(M)}) \triangleright\id \big)\\
&= \alpha^{-1}_{Y\triangleright D(M), K(M)}(\phi^Y_{D(M),D(K(M))}(\id)) \tilde{d}^{-1}_{Y,D(M)}\\ &\big(\psi^Q_{D(M),D(K(M))}((\lambda_D)_M\mu_{D(M)}) \triangleright\id \big)\\
&=\alpha^{-1}_{Y\triangleright D(M), K(M)}(\phi^Y_{D(M),D(K(M))}(\id))   D^{\la}(\psi^Q_{D(M),D(K(M))}((\lambda_D)_M\mu_{D(M)}) \triangleright\id)\\
& \tilde{d}^{-1}_{Q,D(M)}\\
&=\alpha^{-1}_{Q\triangleright D(M), K(M)}(\phi^Y_{D(M),D(K(M))}(\id)(\psi^Q_{D(M),D(K(M))}((\lambda_D)_M\mu_{D(M)}) \triangleright\id)) \\
&\tilde{d}^{-1}_{Q,D(M)}\\
&=\alpha^{-1}_{Q\triangleright D(M), K(M)}(\phi^Q_{D(M),D(K(M))}(\psi^Q_{D(M),D(K(M))}((\lambda_D)_M\mu_{D(M)}) )) \tilde{d}^{-1}_{Q,D(M)}\\
&=\alpha^{-1}_{Q\triangleright D(M), K(M)}((\lambda_D)_M\mu_{D(M)}) \tilde{d}^{-1}_{Q,D(M)}\\
&=\alpha^{-1}_{Q\triangleright D(M), K(M)}( D(\mu_M) d^{-1}_{Q,M}) \tilde{d}^{-1}_{Q,D(M)}\\
&=\mu_M \alpha^{-1}_{Q\triangleright D(M), Q\triangleright M}(d^{-1}_{Q,M})\tilde{d}^{-1}_{Q,D(M)}=\mu_M (\id_Q\triangleright (\ev_D)_M ).
\end{align*}
Here $Y=\uhom(D(M),D(K(M)))$. The second equality follows from the definition of the isomorphisms $\xi^D$ given in \eqref{xi-definition}. Here $\tilde{d}$ is the module structure of the functor $D^{\la} $. The third equality follows from the naturality of $\tilde{d}$, the fourth one follows from the naturality of $\alpha$, the fifth equality follows from \eqref{comm-phi3}, the sixth equality follows from the fact that $\mu$ is a module natural transformation. The last equality follows from \eqref{adjointp-mod-funct}. 
\epf


\begin{thebibliography}{AEGPk}

\bibitem{BM}  {\sc N. Bortolussi} and  {\sc M. Mombelli}. 
\emph{(Co)ends for representations of tensor categories}, Theory and Applications of Categories, Vol. 37, No. 6, (2021),  144--188.

\bibitem{BN}  {\sc A. Bruguieres} and  {\sc S. Natale}. 
\emph{Exact sequences of tensor categories.}
Int. Math. Res. Not. 2011, No. 24, (2011) 5644--5705.


\bibitem{DMNO} {\sc A. Davydov, M. Müger,  D. Nikshych,} and {\sc V. Ostrik.} \emph{ The Witt group of non-degenerate braided fusion categories}.  
J. Reine Angew. Math. 677, (2013) 135-177.


\bibitem{DSS} {\sc C.L. Douglas}.  {\sc C. Schommer-Pries} and {\sc N. Snyder}. \emph{The balanced tensor product of module categories}, Kyoto J. Math. Volume 59, Number 1 (2019), 167--179.



\bibitem{EG}  {\sc P. Etingof} and {\sc S. Gelaki}, \emph{Exact sequences of tensor categories with respect to a module category} Adv. Math.
 308, 1187--1208 (2017).
 
 \bibitem{EGNO}  {\sc P. Etingof}, {\sc S. Gelaki}, {\sc D. Nikshych} and {\sc V. Ostrik}. \emph{Tensor categories}, Lectures notes (2009) 80--83. http://www-math.mit.edu/$\sim$etingof/tenscat1.pdf

\bibitem{EO} {\sc P. Etingof} and {\sc V. Ostrik}.
\emph{Finite tensor categories}, Mosc. Math. J. \textbf{4} no. 3 (2004)
, 627--654.

\bibitem{FSS0} \textsc{J. Fuchs, G. Schaumann} and \textsc{ C. Schweigert},\emph{ Eilenberg-Watts calculus for finite categories and a bimodule Radford $ S^4$ theorem}, Trans. Amer. Math. Soc. 373 (2020), 1--40.



\bibitem{FSS} \textsc{J. Fuchs, G. Schaumann} and \textsc{ C. Schweigert}, \emph{Module Eilenberg-Watts calculus}, Preprint arXiv:2003.12514.


 \bibitem{FSS1} \textsc{J. Fuchs, G. Schaumann} and \textsc{ C. Schweigert}, \emph{A modular functor from state sums
for finite tensor categories and their bimodules}, Hamburger Beiträge zur Mathematik Nr. 812, preprint arxiv:1911.06214.

\bibitem{FS} \textsc{J. Fuchs} and \textsc{ C. Schweigert}, \emph{Internal natural transformations
and Frobenius algebras in the Drinfeld center}, Preprint arxiv:2008.04199.


\bibitem{Gr}  \textsc{J. Greenough}. \emph{Monoidal 2-structure of bimodules categories}, J. Algebra \textbf{324} (2010), 1818--1859.
 
\bibitem{Gr2}  \textsc{J. Greenough}.  \textit{Bimodule categories and monoidal 2-structure}, Ph.D. thesis, University of New Hampshire, 2010.



\bibitem{GNN}  \textsc{S.Gelaki, D. Naidu}, and \textsc{D. Nikshych}. \textit{Centers of graded fusion categories.}
Algebra and Number Theory, 3(8), (2009) 959–-990.


\bibitem{M}  \textsc{  S. MacLane}, \textit{Categories for the Working Mathematician.}  New York: Springer, 1998.


\bibitem{MM}
\textsc{ A. Mejía Castaño} and \textsc{ M. Mombelli}, \textit{Equivalence classes of exact module categories over graded tensor categories}. Communications in Algebra 48 (2020) 4102--4131.





\bibitem{GSch2} {\sc G. Schaumann}, \textit{Duals in tricategories and in the tricategory of bimodule categories}, Ph.D. thesis (2013).  

\bibitem{GSch} {\sc G. Schaumann}, \textit{Pivotal tricategories and a categorification of inner-product modules}, Algebr.
Represent. Theory 18 (2015) 1407--1479.

\bibitem{Sh1} {\sc K. Shimizu.} \emph{The monoidal center and the character algebra.}  J. Pure Appl. Algebra 221, No. 9, 2338--2371 (2017).

\bibitem{Sh2} {\sc K. Shimizu,} \emph{Further results on the structure of (Co)ends in fintite tensor categories}, Applied Categorical Structures, volume 28, (2020) 237-–286.

\bibitem{Sh3} {\sc K. Shimizu.} \emph{Integrals for finite tensor categories}, Algebr. Represent. Theory (2018). https://doi.org/10.1007/s10468-018-9777-5.


\bibitem{Sh4} {\sc K. Shimizu.} \emph{ Relative Serre functor for comodule algebras}, Preprint 	arXiv:1904.00376.

\bibitem{Sh5} {\sc K. Shimizu.} \textit{On unimodular finite tensor categories}, Int. Math. Res. Notices 2017 (2017) 277--322.

\bibitem{Ta}  {\sc D. Tambara}.
\emph{Invariants and semi-direct products for finite group actions
on tensor categories}. J. Math. Soc. Japan \textbf{53} (2001),
429--456.
\end{thebibliography}
\end{document}